\documentclass[11pt,letterpaper]{amsart}

\usepackage{amsmath,amssymb,amsthm,verbatim, csquotes,mathrsfs,stmaryrd}
\usepackage{hyperref}
\usepackage{xcolor}
\usepackage{esint}
\usepackage{mathtools}
\usepackage{graphicx}
\usepackage{float}
\usepackage{caption}
\mathtoolsset{showonlyrefs}
\usepackage{geometry}

\newtheorem{theorem}{Theorem}[section]
\newtheorem{lemma}[theorem]{Lemma}
\newtheorem{proposition}[theorem]{Proposition}
\newtheorem{definition}[theorem]{Definition}
\newtheorem{corollary}[theorem]{Corollary}
\newtheorem{remark}[theorem]{Remark}

\def\Om{\Omega}
\def\p{\partial}

\def\de{\delta}
\def\De{\Delta}

\def\S{{\Sigma}}
\def\<{\langle}
\def\>{\rangle}
\def\div{{\rm div}}
\def\na{\nabla}

\def\spt{{\rm spt}}

\providecommand{\abs}[1]{\lvert#1\rvert}
\providecommand{\Abs}[1]{\ensuremath{\left\lvert#1\right\rvert}}

\providecommand{\norm}[1]{\lVert#1\rVert}

\newcommand{\mbN}{\mathbb{N}}

\newcommand{\mbR}{\mathbb{R}}
\newcommand{\mbS}{\mathbb{S}}

\newcommand{\mcA}{\mathcal{A}}
\newcommand{\mcB}{\mathcal{B}}

\newcommand{\mcH}{\mathcal{H}}

\newcommand{\mcL}{\mathcal{L}}
\newcommand{\mcM}{\mathcal{M}}

\newcommand{\mcR}{\mathcal{R}}
\newcommand{\mcS}{\mathcal{S}}

\newcommand{\rd}{{\rm d}}

\newcommand{\wsc}{\overset{\ast}{\rightharpoonup}}

\newcommand{\ra}{\rightarrow}

\newcommand{\eq}[1]{\begin{equation}\begin{alignedat}{2} #1 \end{alignedat}\end{equation}}

\numberwithin{equation} {section}

\begin{document}

	
\title[Bernstein theorem]{Bernstein theorem for $3$-dimensional anisotropic minimal graphs with free boundary}
\date{\today}

\author[Wang]{Guofang Wang}
\address[G.W]{Mathematisches Institut\\
	Universit\"at Freiburg\\
	Ernst-Zermelo-Str.1\\
	79104\\Freiburg\\ Germany}
\email{guofang.wang@math.uni-freiburg.de}

\author[Wei]{Wei Wei}
\address[W.W]{School of Mathematics\\ Nanjing University\\ 210093\\Nanjing\\ P.R. China}
\email{wei\_wei@nju.edu.cn}
\thanks{W.W. is supported by NSFC (Grant No.~12571218, 12271244) and Alexander von Humboldt fellowship}

\author[Xia]{Chao Xia}
\address[C.X]{School of Mathematical Sciences\\
	Xiamen University\\
	361005, Xiamen, P.R. China}
\email{chaoxia@xmu.edu.cn}
\thanks{C.X. is supported by NSFC (Grant No. 12271449, 12671069, 12526203, 12526102) and the Natural Science Foundation of Fujian Province of China (Grant No. 2024J011008).}

\author[Zhang]{Xuwen Zhang}
\address[X.Z]{Mathematisches Institut\\
	Universit\"at Freiburg\\
	Ernst-Zermelo-Str.1\\
	79104\\Freiburg\\ Germany}
\email{xuwen.zhang@math.uni-freiburg.de}


\begin{abstract}
In this paper, we continue our recent study on anisotropic minimal surface equation with free boundary condition in Wang-Wei-Xia-Zhang (Arch Ration Mech Anal 250:42, 2026). The first main result solves the corresponding mixed boundary value problem. As an application, we prove the following Bernstein-type theorem: any anisotropic minimal graph over $\mathbb{R}^3_+$ with free boundary must be flat. These extend the classical results of Simon (Indiana Univ Math J 25:821--855, 1976; Math Z 154:265--273, 1977) to an appropriate free boundary setting.
\\

\noindent {\bf MSC 2020: 53A10,  35J93, 35J25}\\
{\bf Keywords:} Anisotropic minimal graph, free boundary condition, mixed boundary problem, Bernstein theorem\\
\end{abstract}

\maketitle
\tableofcontents

\section{Introduction}
Let $F:\mbR^{n+1}\to\mbR_+$ be a smooth, uniformly elliptic integrand. Namely, $F$ is a one-homogeneous function on $\mbR^{n+1}$ which is smooth and positive and on $\mbS^n$, satisfying that the Wulff shape $\{F<1\}$ is uniformly convex.
Associated to $F$ we consider the function $f$ on the hyperplane $\mbR^n=\{x_{n+1}=0\}$ defined by
\eq{
f(y)
\coloneqq F(-y,1),\quad y\in\mbR^n.
}

Let $u\in C^2(\Om)$, where $\Om$ is a domain in $\mbR^n\coloneqq\{x_{n+1}=0\}$. We say that $u$ satisfies the {\em anisotropic minimal surface equation} (AMSE) if
\eq{\label{defn:AMSE}
{\rm div}\bigl(Df(Du(x))\bigr)=0\quad\text{on }\Om,
}
where $D$ and ${\rm div}$ denote the Euclidean gradient and divergence on $\mbR^n$. When $\p\Om$ is non-empty and sufficiently regular, we say that $u$ satisfies the {\em anisotropic free boundary condition} if
\eq{\label{defn:anisotropic-free-bdry-Intro}
\langle Df(Du(x)),\bar N\rangle=0,\quad x\in\p\Om,
}
where $\bar N$ is the outer unit normal of $\Om\subset\mbR^n$ along $\p\Om$.

Equations \eqref{defn:AMSE}--\eqref{defn:anisotropic-free-bdry-Intro} are the Euler--Lagrange conditions for the anisotropic surface energy among graphs over $\Om$. Geometrically, if $M=\{(x,u(x)):x\in\Om\}$ is the graph of $u$, then $M\subset\Om\times\mbR$ is an anisotropic minimal hypersurface and, along $\p M\subset\p\Om\times\mbR$, its anisotropic normal $\nu_F\coloneqq \bar D F(\nu)$ (the {\em Cahn--Hoffman map} applied to the Euclidean unit normal $\nu$) satisfies
\eq{\label{eq:<nu_F,e_1>=0}
\langle \nu_F, \bar N\rangle =0.
}
When $F=F_{eucl}$, \eqref{defn:AMSE} reduces to the classical \emph{minimal surface equation} (MSE)
\eq{\label{defn:MSE}
{\rm div}\left(\frac{Du}{\sqrt{1+\abs{Du}^2}}\right)=0,
}
and \eqref{defn:anisotropic-free-bdry-Intro} becomes the usual orthogonality condition $\langle \nu,\bar N\rangle=0$.

The study of MSE and AMSE has a long history, here we briefly recall some results related to our work.
For the classical minimal surface equation, Bombieri-De Giorgi-Miranda \cite{BDeGM69} established the interior gradient estimates ($n=2$ already by Finn \cite{Finn54}) by using the nowadays well-known integral methods. They proved that
\eq{\label{eq:BDG-gradient-estimate}
\abs{Du(x)}
\leq\exp\left[C(n)+\frac{C(n)}r\left(\sup_{B^n_r(x)}u-u(x)\right)\right],\quad\forall r>0\text{ whenever }B^n_r(x)\subset\Om.
}
It is well-known that mean curvature type equations are not uniformly elliptic before a gradient estimate is known, therefore \eqref{eq:BDG-gradient-estimate} is central in the study of MSE.

Among many important applications of \eqref{eq:BDG-gradient-estimate}, we list two of them: the first one is the so-called Liouville theorem \cite{BDeGM69}, concerning the global rigidity of MSE, which states that if $\Om=\mbR^n$ and $u$ has one-sided linear growth, then $u$ must be affine.
The second one is the solvability of the Dirichlet problem for \eqref{defn:MSE}, which states that if $\Om\subset\mbR^n$ is a bounded open domain with $\p\Om\in C^2$, and $\varphi\in C^0(\p\Om)$, then there exists a unique $u\in C^0(\overline\Om)\cap C^2(\Om)$ solving \eqref{eq:<nu_F,e_1>=0} with $u=\varphi$ on $\p\Om$ if and only if $\p\Om$ is mean convex.
The result was proved by Jenkins-Serrin \cite{JS68} provided $\varphi\in C^{2,\gamma}(\p\Om)$.
Combining \cite{JS68} with \eqref{eq:BDG-gradient-estimate}, one can carry out a compactness argument and relax $\varphi$ to be only continuous, see \cite[Chapter 16]{GT01}.

For a general uniformly elliptic integrand, Simon \cite{Simon76} obtained the Bombieri-De Giorgi-Miranda-type interior gradient estimates in a very broad class of non-uniformly elliptic equations by the integral method. In terms of AMSE \eqref{defn:AMSE}, Simon's estimates can be simplified (cf. \cite[Section 4]{Simon76}) as \eqref{eq:BDG-gradient-estimate} with $C$ additionally depending on $F$. As an application, he showed in \cite[Section 5]{Simon76} the solvability of the Dirichlet problem for \eqref{defn:AMSE} with continuous boundary data.
Later in 
\cite{Simon77}, Simon used his interior gradient estimate and the solvability of the Dirichlet problem to show the removability of singularities of the AMSE \eqref{defn:AMSE} and, more importantly, a Bernstein theorem for entire anisotropic minimal graph over $\mbR^3$, which we will discuss in detail below.
We also note that his gradient estimate yields readily the Liouville-type theorem for entire anisotropic minimal graph by a standard argument.

We now turn to the free boundary problem \eqref{defn:AMSE}-\eqref{defn:anisotropic-free-bdry-Intro}.
In our recent work \cite{WWXZ26}, we established the following free boundary counterpart of the gradient estimates of Bombieri-De Giorgi-Miranda \cite{BDeGM69} and Simon \cite{Simon76}:
\eq{\label{eq:gradient-esti-WWXZ26}
\abs{Du(x)}
\leq\exp\left[C(n,F)+\frac{C(n,F)}r\left(\sup_{B^{n+}_r(x)}u-u(x)\right)\right],\quad\forall r>0,
}
where $B^{n+}_r(x)=B^n_r(x)\cap\mbR^n_+$ and $\mbR^n_+\coloneqq\{x_1>0:x_{n+1}=0\}$.
Using the standard argument as mentioned above, we prove a Liouville-type theorem for anisotropic minimal graph with free boundary over $\mbR^n_+$, see \cite[Theorem 1]{WWXZ26}.
Motivated by Simon's solvability of Dirichlet problem \cite{Simon76}, it is natural to ask whether \eqref{eq:gradient-esti-WWXZ26} leads to the solvability of the AMSE \eqref{defn:AMSE} with mixed boundary conditions. In this regard, our first main result is as follows.

For $\varrho>0$ fixed, we use the conventions $B^n_\varrho=B^n_\varrho(0)$, $B^{n+}_\varrho=B^{n+}_\varrho(0)$, and also
\eq{\label{defn:conventions-Om}
 \Omega_\rho=B^{n+}_{\varrho},
 \quad
 \Gamma_\rho=B^n_\varrho\cap\p\mbR^n_+,
 \quad
 S_\varrho=\p B^n_\varrho\cap\overline{\mbR^n_+},\quad
 E_\varrho
 =\overline\Gamma_\varrho\cap\overline S_\varrho.
}

\begin{theorem}\label{thm:replacement}
Let $n\geq2$. Suppose $\phi$ is the restriction to $\overline S_\varrho$ of a $C^2$ function defined in a neighborhood of $\overline S_\varrho$. Then there exists a unique $w\in W^{1,1}(\Omega_\varrho)\cap C^2(\Omega_\varrho\cup\Gamma_\varrho)$ such that
\eq{\label{eq:replacement-PDE}
 \begin{cases}
 {\rm div}(Df(Dw))=0 & \text{in }\Om_\varrho,\\
 \langle Df(Dw),e_1\rangle=0 & \text{on }\Gamma_\varrho,
 \end{cases}
}
and
\eq{\label{eq:replacement-trace}
 \lim_{\Om_\varrho\ni x\to y}w(x)=\phi(y)
 \quad\text{for every }y\in S_\varrho\setminus E_\varrho.
}
Moreover, for every $\zeta\in C^1(\overline\Om_\varrho)$ with $\zeta=0$ on
$S_\varrho$ and $\operatorname{spt}\zeta\cap E_\varrho=\emptyset$,
\eq{\label{eq:replacement-weak}
 \int_{\Om_\varrho} \left<Df(Dw),D\zeta\right>\rd x
 =0.
}
\end{theorem}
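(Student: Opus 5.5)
The plan is to reduce the mixed problem to a Dirichlet problem by even reflection across $\p\mbR^n_+$, solve that with Simon's theory, and then handle the corner $E_\varrho$ by a limiting argument.

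\textbf{Reflection.} Let $R(x_1,x')=(-x_1,x')$. The free boundary condition $\langle Df(Dw),e_1\rangle=0$ is not preserved by reflection for a general $F$, since $f$ need not be even in $y_1$. Instead I would introduce the reflected integrand $\tilde f(y)=f(Ry)$ on $\{x_1<0\}$. The glued energy
\eq{
\mcA(w)=\int_{\Om_\varrho}f(Dw)+\int_{R\Om_\varrho}\tilde f(Dw)
}
then has an integrand that jumps across $\{x_1=0\}$. This loses the $x$-independence that Simon's Dirichlet theory \cite{Simon76} uses, so plain reflection is not enough.

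\textbf{Main approach: direct method with free boundary gradient estimates.}

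1. Extend $\phi$ to a $C^2$ function $\Phi$ on a neighborhood of $\overline\Om_\varrho$.

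2. Approximate $\Om_\varrho$ by smooth convex domains $U_k\subset\Om_\varrho$ whose boundary contains the flat piece $\Gamma_{\varrho-1/k}$. Round the corners near $E_\varrho$ so that the curved part of $\p U_k$ is uniformly convex and meets $\{x_1=0\}$ orthogonally.

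3. On each $U_k$, solve the mixed problem: $\langle Df(Dw),e_1\rangle=0$ on the flat part and $w=\Phi$ on the curved part. I would use the continuity method or Leray--Schauder for the uniformly elliptic regularization $f_\epsilon(y)=f(y)+\epsilon|y|^2$.

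4. Obtain a priori estimates, uniform in $\epsilon$ and $k$, as follows.
   \begin{itemize}
   \item $C^0$: comparison with constants.
   \item Boundary gradient on the Dirichlet part: barriers built from uniform convexity of the curved boundary, as in Simon's Section~5.
   \item Boundary gradient near the junction: use the orthogonal intersection, and reflect the barriers with respect to the $f$-geometry.
   \item Interior and free boundary gradient: the estimate \eqref{eq:gradient-esti-WWXZ26} from \cite{WWXZ26}, localized near the flat part.
   \end{itemize}
   With gradient bounds in hand, the equation is uniformly elliptic. Hölder estimates for the oblique and Neumann problem (Lieberman) then give $C^{2,\alpha}$ compactness.

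5. Let $\epsilon\to0$ to obtain $w_k$. Then let $k\to\infty$. Interior and free boundary estimates give $C^2_{loc}(\Om_\varrho\cup\Gamma_\varrho)$ convergence. The barriers at points of $S_\varrho\setminus E_\varrho$ are local and uniform, which gives \eqref{eq:replacement-trace}.

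6. Obtain the $W^{1,1}$ bound from the energy, which is comparable to the area, by testing minimality against $\Phi$:
\eq{
\int_{\Om_\varrho}f(Dw)\le\int_{\Om_\varrho}f(D\Phi)<\infty .
}

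7. For \eqref{eq:replacement-weak}, integrate the equation against $\zeta$ on $\Om_\varrho$. The boundary term on $\Gamma_\varrho$ vanishes by the free boundary condition, and the term on $S_\varrho$ vanishes since $\zeta=0$ there. Because $\spt\zeta\cap E_\varrho=\emptyset$, the divergence theorem applies on a subdomain avoiding the corner, where $w$ is $C^1$ up to $\Gamma_\varrho$. The approximation near $S_\varrho$ is justified by the $W^{1,1}$ bound, since $|Df|$ is bounded.

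\textbf{Uniqueness.} Take two solutions $w_1,w_2$ and test with $\zeta=\eta\,\psi_\delta(w_1-w_2)$, where $\psi_\delta$ is a truncation and $\eta$ is a cutoff near $E_\varrho$. Strict convexity of $f$ gives
\eq{
\langle Df(Dw_1)-Df(Dw_2),Dw_1-Dw_2\rangle\ge0,
}
with equality only if $Dw_1=Dw_2$. The error from $\eta$ is controlled because $E_\varrho$ has zero $(n-1)$-capacity in dimension $n-1\ge1$ as a codimension-$2$ set, and $|Df|$ is bounded. The boundary values agree on $S_\varrho\setminus E_\varrho$, so $w_1=w_2$.

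\textbf{Main obstacle.} I expect the hardest part to be the boundary gradient estimate near the corner $E_\varrho$ for the approximating domains, since there barriers must respect both boundary conditions at once. This is exactly why the statement only asserts the boundary values away from $E_\varrho$ and states \eqref{eq:replacement-weak} only for $\zeta$ whose support avoids $E_\varrho$. My first attempt would be to avoid uniform estimates at the corner altogether: allow the constants to blow up near $E_\varrho$ and rely only on local barriers at points of $S_\varrho\setminus E_\varrho$.
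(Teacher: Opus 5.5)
Your proposal has a genuine gap in Step 4: you need a gradient bound that is uniform in the regularization parameter, and nothing in your plan supplies it. The estimate \eqref{eq:gradient-esti-WWXZ26} of \cite{WWXZ26} is proved for solutions of the AMSE itself. Its proof uses that the graph is anisotropic minimal: it relies on the intrinsic equation for $V_F$, and on stability to remove the $|h|^2$ term in the Sobolev inequality. So it says nothing about solutions of your $\epsilon$-problems, and without an $\epsilon$-uniform bound you cannot let $\epsilon\to0$ in Step 5.

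Moreover, $f_\epsilon=f+\epsilon|y|^2$ is the wrong regularization for a free boundary problem. Its natural conormal condition is $\langle Df(Dw),e_1\rangle+2\epsilon w_1=0$. If you instead impose $\langle Df(Dw),e_1\rangle=0$ together with $\div(Df_\epsilon(Dw))=0$, the problem is no longer variational. In either case the boundary term that the integral method needs to vanish on $\Gamma_\varrho$ does not vanish. With $z=1+\log f(Du)$ and $\mcA^{ij}=f(Du)^2(f_\epsilon)_{ij}(Du)$, tangential differentiation of the boundary condition gives $\mcA^{1j}z_j=-2\epsilon f(Du)u_1(f_\epsilon)_{1j}u_{1j}$ in the first case and $2\epsilon f(Du)\sum_{\alpha\ge2}f_\alpha u_{1\alpha}$ in the second.

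The missing idea is the choice $f_\tau=f+\frac\tau2f^2$. Since $Df_\tau=(1+\tau f)Df$, the natural boundary condition of $\int f_\tau(Dv)$ is exactly the anisotropic free boundary condition, and tangential differentiation gives $\mcA^{1j}_\tau z_j=0$ on $\Gamma_\varrho$; see \eqref{eq:free boundary}. The integral method can then be redone for these non-minimal graphs, using higher moments of $z$ and a Moser iteration that absorbs the extra $|h|^2$ term. This yields the $\tau$-independent estimate of Theorem \ref{uniform gradient estimate}, which is the real content of the proof.

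Two further steps fail as stated.

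First, existence on $U_k$ by continuity or Leray--Schauder requires global a priori bounds on all of $\overline{U_k}$, including the Dirichlet--conormal junction. The constants may depend on $k$, but they must be finite. Your step ``reflect the barriers with respect to the $f$-geometry'' is not made precise, and you observed yourself that reflection does not respect a non-even $f$. Your fallback of letting constants blow up at the corner is incompatible with a fixed-point scheme. Also, $\Om_\varrho$ already meets $\{x_1=0\}$ orthogonally, and a domain with flat and curved faces meeting at a right angle is not smooth, so the $U_k$ gain nothing.

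The paper avoids the corner entirely. Because $f_\tau$ has quadratic growth and is uniformly convex, $\mathscr A_\tau$ has a unique minimizer in $\{v\in W^{1,2}(\Om_\varrho):v|_{S_\varrho}=\phi\}$ by the direct method. Only local regularity in $\Om_\varrho\cup\Gamma_\varrho$ is ever used, and minimality gives the uniform $W^{1,1}$ bound.

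Second, the $C^0$ bound by comparison with constants is false in general. The quantity $\langle Df(0),e_1\rangle=-\bar D_1F(e_{n+1})$ need not vanish; for the capillary gauge it equals $\cos\theta$. So a constant violates the conormal condition and gives only a one-sided bound. The paper compares instead with the affine solution $\ell=t_0x_1$, where $\langle Df(t_0e_1),e_1\rangle=0$, from Lemma \ref{lem:affine}.

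Your local affine barriers at $S_\varrho\setminus E_\varrho$, your derivation of \eqref{eq:replacement-weak}, and your uniqueness argument via a cutoff around the $\mcH^{n-1}$-null set $E_\varrho$ are fine.
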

In the isotropic case, mixed Dirichlet-capillary problems for the prescribed mean curvature equation were studied by Giusti \cite{Giusti75,Giusti76} and Gerhardt \cite{Gerhardt79}, who obtained existence and regularity results for the solutions.
Obersnel and Omari \cite{OO09,OO25} studied mixed Dirichlet-Neumann and Dirichlet-capillary problems in the setting of $BV$ functions.
For general uniformly elliptic equations, Lieberman \cite{Liebermann86,Liebermann89,Lieberman13} developed regularity
theory for mixed Dirichlet-oblique boundary problems, including regularity at the corner.
To our knowledge, there is less progress for the anisotropic mean curvature type equation, and we expect the establishment of Theorem \ref{thm:replacement} to have potential in many physical or geometrical applications in the future.


As one application, we obtain the following removability result at the free boundary.

\begin{corollary}\label{Prop:removable-singularity}
Let $s>0$, suppose that $K\subset\subset\Om_s\cup\Gamma_s$ is compact and satisfies $\mcH^{n-1}(K)=0$.
Let $u\in C^2(\left(\Om_s\cup\Gamma_s\right)\setminus K)$ satisfy
\eq{
\begin{cases}
\div(Df(Du))=0 &\text{in }\Om_s\setminus K\\
\left<Df(Du),e_1\right>=0 &\text{on }\Gamma_s\setminus K.
\end{cases}
}
Then $u\in C^{2}(\Om_s\cup\Gamma_s)$.
\end{corollary}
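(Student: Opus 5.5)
Following Simon's removable singularity argument \cite{Simon77}, we replace $u$ near $K$ by the solution of the mixed problem and show the two coincide. First reduce to a local statement. Points of $K$ in $\Om_s$ are handled by Simon's interior removability theorem. For a point $y\in K\cap\Gamma_s$, choose $\varrho$ small and a ball $B^{n+}_\varrho(y)$ with $\overline{B^{n+}_\varrho(y)}\subset\Om_s\cup\Gamma_s$. Since $K$ is compact with $\mcH^{n-1}(K)=0$, we may choose $\varrho$ so that $K$ avoids the spherical part $S_\varrho(y)$; this holds for a.e. radius, because the radial projection $|x-y|$ is Lipschitz and so maps $K$ to a set of $\mcH^1$-measure zero in $\mbR$. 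Hence $u$ is $C^2$ in a neighborhood of $\overline S_\varrho(y)$. Because the problem is invariant under translations along $\p\mbR^n_+$, we may take $y=0$. Take $\phi=u$ near $\overline S_\varrho$, smoothly extended across $\Gamma$ if needed. Theorem \ref{thm:replacement} then gives $w$ solving \eqref{eq:replacement-PDE}, \eqref{eq:replacement-trace} and \eqref{eq:replacement-weak}.

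The key step is a comparison showing $u=w$ in $\Om_\varrho\setminus K$. I would test the difference of the two weak equations with
\[
\zeta=\eta_\varepsilon\,\psi_\delta\,\tau_k(u-w),
\]
where $\tau_k(t)=\max\{-k,\min\{k,t\}\}$ is a truncation, $\psi_\delta$ is a cutoff vanishing near $E_\varrho$, and $\eta_\varepsilon$ is a cutoff vanishing near $K$ with $\int|D\eta_\varepsilon|\to0$. Such $\eta_\varepsilon$ exists because $\mcH^{n-1}(K)=0$: cover $K$ by finitely many balls $B_{r_i}(x_i)$ with $\sum r_i^{n-1}<\varepsilon$ and take $\eta_\varepsilon=\min_i$ of the standard radial cutoffs. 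The Neumann condition on $\Gamma$ holds for both $u$ (off $K$) and $w$, so no boundary term appears on $\Gamma_\varrho$. Since $u=w$ on $S_\varrho$, there is no boundary term there either. This gives
\[
\int \eta_\varepsilon\psi_\delta\,\chi_{\{|u-w|<k\}}\langle Df(Du)-Df(Dw),Du-Dw\rangle
\le k\sup|Df|\int\bigl(|D\eta_\varepsilon|+|D\psi_\delta|\bigr).
\]
Here $|Df|$ is bounded, since $f$ is Lipschitz by one-homogeneity of $F$. Letting $\varepsilon\to0$ and then $\delta\to0$ requires care at the corner $E_\varrho$. Either $\int|D\psi_\delta|\to0$, which holds since $\mcH^{n-1}(E_\varrho)=0$, or one uses \eqref{eq:replacement-weak} directly with a cutoff near $E_\varrho$. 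First we need $u$ to be a weak solution across $K$, i.e. the same identity holds with $\eta_\varepsilon$ removed; this is again due to $\int|D\eta_\varepsilon|\to0$ together with boundedness of $Df$. Strict convexity of $f$ then gives $Du=Dw$ a.e. on $\{|u-w|<k\}$ for every $k$. Hence $u-w$ is locally constant on $\Om_\varrho\setminus K$, which is connected since $\mcH^{n-1}(K)=0$. The constant is zero because the two functions agree near $S_\varrho$.

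The main obstacle is the boundary behaviour. The comparison needs $u-w\to0$ near $S_\varrho$ in a usable sense and control near $E_\varrho$, where $w$ is only known to be $W^{1,1}$ with the boundary limit \eqref{eq:replacement-trace} on $S_\varrho\setminus E_\varrho$. Since $u$ is $C^2$ up to $S_\varrho$ and $w$ attains $\phi=u$ continuously there, we can apply the test function on $\{|u-w|>\sigma\}$, which avoids a neighborhood of $S_\varrho\setminus E_\varrho$ on compact pieces. The thin neighborhood of $E_\varrho$ is then absorbed by the $\psi_\delta$ cutoff and the bound on $|Df|$. Once $u=w$ on $\Om_\varrho\setminus K$, the function $w\in C^2(\Om_\varrho\cup\Gamma_\varrho)$ provides the $C^2$ extension of $u$ across $K$, which proves the corollary.
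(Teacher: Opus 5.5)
Your argument is in substance the paper's: the same replacement $w$ from Theorem~\ref{thm:replacement}, a cutoff $\eta_\varepsilon$ around $K$ with $\int|D\eta_\varepsilon|\lesssim\varepsilon$ coming from $\mcH^{n-1}(K)=0$, and a bounded monotone function of $u-w$ (truncation in place of $\arctan$). It then uses Fatou, strict monotonicity of $Df$, connectedness of $\Om_\varrho\setminus K$, and the trace \eqref{eq:replacement-trace} to fix the constant.

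The one genuine difference is at $S_\varrho$. The paper keeps a cutoff $\chi_\delta$ near $S_\varrho$. It then needs the barrier-based linear decay \eqref{eq:boundary-linear-u-w}, $|u-w|\le C_\varepsilon\,\mathrm{dist}(x,S_\varrho)$, to kill the $D\chi_\delta$ term. Your shift to $\{|u-w|>\sigma\}$, e.g. $\zeta=\eta_\varepsilon\psi_\delta\,\mathrm{sgn}(u-w)\min\{(|u-w|-\sigma)_+,k\}$, needs no rate. Since $u-w$ extends continuously by $0$ to $S_\varrho\setminus E_\varrho$, the set $\{|u-w|\ge\sigma\}\cap\operatorname{spt}(\eta_\varepsilon\psi_\delta)$ is compact and away from $S_\varrho$. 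So $\zeta$ has compact support in $(\Om_\varrho\cup\Gamma_\varrho)\setminus K$, and only the two conormal conditions on $\Gamma_\varrho$ enter. Letting $\varepsilon,\delta\to0$ gives $D(u-w)=0$ on the open set $\{u\neq w\}$. Then connectedness and the boundary limit force $u\equiv w$. This is a little more economical than the paper's route. Treating $E_\varrho$ with a separate $\psi_\delta$ is equivalent to the paper's including $E_\varrho$ in the covering.

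Two points need correcting.

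\begin{itemize}
\item Your localization is false for $n\ge3$. $\mcH^{n-1}(K)=0$ does not imply $\mcH^1(K)=0$, so the image of $K$ under $x\mapsto|x-y|$ need not be null. For example, if $K\subset\Gamma_s$ contains a segment starting at $y$, every sphere $\partial B_\varrho(y)$ with $\varrho$ below its length meets $K$. Then there may be no admissible small radius at all. Do what the paper does instead: since $K$ is compact in $\Om_s\cup\Gamma_s$, choose $\varrho<s$ with $K\subset\subset\Om_\varrho\cup\Gamma_\varrho$. Then $u$ is $C^2$ near $\overline S_\varrho$, and neither Simon's interior theorem nor a translation is needed.
\item Drop the step ``$u$ is a weak solution across $K$''. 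With test functions built from $u-w$ it would require $Du\in L^1$ near $K$, which is not known. It is also unnecessary: in your displayed inequality, kept with $\eta_\varepsilon$, the left integrand is nonnegative. So Fatou as $\varepsilon\to0$, then $\delta\to0$, then $\sigma\to0$, suffices.
\end{itemize}
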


In the spirit of Simon \cite{Simon77}, we use Corollary \ref{Prop:removable-singularity} to prove the following {\em half-space anisotropic Bernstein theorem} for anisotropic minimal graph over $\mbR^3_+$.

\begin{theorem}\label{Thm:Bernstein-aniso}
Let $F:\mbR^{n+1}\ra\mbR_+$ be a smooth, uniformly elliptic integrand and
$u$  a $C^2$-solution to \eqref{defn:AMSE} on $\mbR^n_+$ satisfying the anisotropic free boundary condition \eqref{defn:anisotropic-free-bdry-Intro}. If $n=3$, then $u$ must be affine.
\end{theorem}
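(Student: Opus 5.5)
We follow Simon's blow-down strategy from \cite{Simon77}, adapted to the half-space through a reflection. Let $M=\{(x,u(x)):x\in\mbR^3_+\}$. The first step is to show that $M$, together with its free boundary on $\p\mbR^3_+\times\mbR$, minimizes the anisotropic energy among competitors in the half-space $\{x_1\ge0\}\times\mbR$ that are allowed to move along the wall. We do this with a calibration. Extend the anisotropic normal $\nu_F$, which does not depend on $x_4$, vertically to $\mbR^3_+\times\mbR$. The AMSE \eqref{defn:AMSE} says exactly that this vector field is divergence free. The free boundary condition \eqref{eq:<nu_F,e_1>=0} says that it is tangent to the wall, so no boundary flux enters on $\{x_1=0\}$. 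The divergence theorem then compares the energy of $M$ with that of any competitor agreeing outside a compact set. As a consequence we get the area-type bound $\mcH^3(M\cap B_R)\le CR^3$.

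The second step is to blow down. Set $M_R=R^{-1}M$. By the mass bound and compactness of integral currents with free boundary, or equivalently of Caccioppoli sets, a subsequence converges to a cone $C$ in $\overline{\mbR^4_+}$. The cone $C$ is an anisotropic minimizing boundary with free boundary on the wall, by the monotonicity formula for $F$-minimizers, or by Simon's argument using the half-space gradient estimate \eqref{eq:gradient-esti-WWXZ26}. Because $M$ is a graph, $C$ is of the form $C_0\times\mbR$ in the vertical direction, possibly with vertical pieces, or it is a graph cone.

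Following Simon, we handle the cone as follows. The cross section is a $2$-dimensional cone, that is, geodesic arcs on $\mbS^2$ intersected with a hemisphere. Its singular set is at most a ray through the origin, and that ray has $\mcH^{2}$-measure $0$ in the relevant dimension. Using Corollary \ref{Prop:removable-singularity}, together with its interior analogue from \cite{Simon77}, applied to the rescaled graphs $u_R(x)=R^{-1}u(Rx)$ near regular points of the limit, we show that the limit is smooth away from a set of $\mcH^{2}$-measure zero. This includes points on the wall. At regular points the convergence is smooth, so the gradient estimate \eqref{eq:gradient-esti-WWXZ26} gives $\sup|Du_R|\le C$ on compact sets away from the singular set. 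Removability then upgrades this to a uniform bound on all of $B^{3+}_1$. Hence $|Du|$ is bounded on $\mbR^3_+$. For a free boundary graph with bounded gradient, the Liouville-type theorem \cite[Theorem 1]{WWXZ26} applies: linear growth forces $u$ to be affine.

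The main obstacle is the dimension-reduction step. We must rule out that the blow-down contains vertical pieces, or a singularity in the $2$-dimensional cross-section on the wall, that would obstruct the gradient bound. To handle this, we first reflect evenly across $\{x_1=0\}$. We use the reflected integrand $\tilde F(\xi_1,\xi')=F(-\xi_1,\xi')$ on the other side, and the free boundary condition makes the glued surface stationary, so that $2$-dimensional cones in $\mbR^3$ can be classified as in Simon's argument.
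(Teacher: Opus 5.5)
Your first step is correct and matches the paper: the calibration makes the subgraph an $F$-minimizer, and compactness of Caccioppoli sets applies. The argument has a genuine gap after that, at the blow-down.

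You claim the rescalings converge to a cone ``by the monotonicity formula for $F$-minimizers.'' No monotonicity formula exists for a general elliptic integrand \cite{Allard74}. As the paper stresses, this is exactly why anisotropic blow-downs need not be cones. It is also why Simon's argument in \cite{Simon77} is designed to avoid cones; it does not classify them. So the De Giorgi-type chain ``cone $\Rightarrow$ cylinder $C_0\times\mbR$ $\Rightarrow$ classify $2$-dimensional cross sections'' has no starting point.

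The reflection does not repair this. For general $F$, gluing $F$ to $\tilde F(\xi_1,\xi')=F(-\xi_1,\xi')$ gives an integrand that jumps across $\{x_1=0\}$. The doubled surface also has a crease along the wall unless $\nu_1=0$ there, and $\langle\nu_F,e_1\rangle=0$ does not give that; for the capillary gauge $F_\theta$ it gives $\nu_1=\cos\theta$. So neither the regularity theory of \cite{SSA77} nor any cone classification applies to the doubled object.

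The gradient-bound step is also circular. Applying \eqref{eq:gradient-esti-WWXZ26} to $u_R(x)=R^{-1}u(Rx)$ needs a bound on $R^{-1}\bigl(\sup_{B^{3+}_R}u-u(Rx)\bigr)$, that is, one-sided linear growth of $u$, which is exactly what is unknown. Since bounded-gradient solutions are affine, ``$|Du|$ bounded'' is equivalent to the conclusion. The examples of \cite{MY24} for $n\ge4$ fail precisely by growing superlinearly. The subgraphs of $u_R$ may converge to a set with vertical pieces ($\nu_4=0$), near which $|Du_R|\to\infty$ even at smooth points of the limit. Corollary \ref{Prop:removable-singularity} is a qualitative extension result for one solution, not a uniform estimate along a sequence.

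The paper needs neither cones nor gradient bounds. It blows down the subgraphs to an $F$-minimizer $E_\infty$ in $(B_2,\mbR^4_+)$ with $D_4\chi_{E_\infty}\le0$ and $\mcH^1(\Sigma_2(E_\infty))=0$, using \cite{SSA77} in the interior and \cite{PM15,PM17} at the wall. Lemma \ref{lem:Simon77-key} then shows that $0$ is a regular point of $E_\infty$, in four steps:
\begin{enumerate}
\item On each regular component, either $\nu_4>0$ or $\nu_4\equiv0$. This follows from the Harnack inequality for $V_F=\nu_4/F(\nu)$, which solves a divergence-form equation with the Neumann condition $g(\nabla_F V_F,\mu)=0$ coming from the anisotropic free boundary condition. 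This, not reflection, is how the wall is handled.
\item Components with $\nu_4>0$ are graphs, and their $\mcH^1$-null singular projections are removed by Corollary \ref{Prop:removable-singularity}.
\item Components with $\nu_4\equiv0$ are cylinders. They are smooth because a vertical singular segment would carry positive $\mcH^1$-measure; this is one of the places where $n=3$ is used.
\item Closures of distinct components cannot meet.
\end{enumerate}
Finally, $\varepsilon$-regularity gives $\nu_{E_j}(r_j^{-1}X)\to\nu_\infty(0)$ for every $X\in M$. Scale invariance gives $\nu_{E_j}(r_j^{-1}X)=\nu_E(X)$, so the normal of $M$ is constant.
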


In the isotropic case, the classical {\em Bernstein problem} asks whether an entire solution of \eqref{defn:MSE} on $\mbR^n$ must be affine. This holds for $n\le 7$ (Bernstein \cite{Bernstein27}, Fleming \cite{Fleming62}, Almgren \cite{Almgren66}, De Giorgi \cite{DeGiorgi65}, Simons \cite{Simons68}) and fails for $n\ge 8$ (Bombieri--De Giorgi--Giusti \cite{BDG69}).
For a general elliptic integrand $F$, the (interior) anisotropic Bernstein problem for \eqref{defn:AMSE} has been settled: it holds for $n=2,3$ (Jenkins \cite{Jenkins61}, Simon \cite{Simon77}) and fails for $n\ge 4$ (Mooney \cite{Mooney22}, Mooney--Yang \cite{MY24}). In particular, Mooney--Yang constructed in \cite{MY24} pairs $(u,F)$ solving \eqref{defn:AMSE} on $\mbR^n$ ($n\ge 4$) with superlinear growth
\eq{\label{example:MY24-superlinear}
\sup_{B_r} u\sim r^{1+\mu}
}
for any $\mu\in\bigl(0,\tfrac12\bigr)$. In their $n=4$ example one writes $\mbR^4=\mbR^2\times\mbR^2$ with variables $(x,y)\in\mbR^2\times\mbR^2$, and both $u$ and the induced function $f$ depend only on $\abs{x}$ and $\abs{y}$.

As observed in \cite{WWXZ26}, these examples also yield counterexamples to the {half-space} anisotropic Bernstein problem for $n\ge 4$: the same function $u$ solves \eqref{defn:AMSE} on $\mbR^4_+$ and, by symmetry, satisfies \eqref{defn:anisotropic-free-bdry-Intro} on $\p\mbR^4_+$. On the other hand, when $n=2$ a calibration argument shows that anisotropic minimal graphs minimize the anisotropic surface energy and hence have Euclidean volume growth (see \cite[Section 4.1]{WWXZ26}); therefore the curvature estimate in \cite{GX25} yields the half-space Bernstein theorem for $n=2$.
In view of this, Theorem \ref{Thm:Bernstein-aniso} thus completes the classification for the half-space anisotropic Bernstein problem.
For further rigidity results of the AMSE, we refer interested readers to \cite{EW22,DY24,DMYZ23,WWZ25}; see also \cite{DePDeR24,DePDeRL24,DeRP25,DeRHW26,Yang26} for the recent study of anisotropic minimal surface without graphicality.

As an easy consequence of Theorem \ref{Thm:Bernstein-aniso}, one can reprove the following half-space Bernstein theorem for capillary minimal graphs.
Precisely, choosing $F$ in Theorem \ref{Thm:Bernstein-aniso} to be the so-called capillary gauge (see \cite{PM15,WWXZ26}), defined as $F_\theta(\xi)\coloneqq\abs{\xi}-\cos\theta\left<e_1,\xi\right>$ for any $\xi\in\mbR^{n+1}$, where $\theta\in(0,\pi)$ is a fixed constant characterizing the capillary angle, one immediately obtains:
\begin{corollary}
Let $n=3$. For any $\theta\in(0,\pi)$,
the only  minimal graphs on $\mbR^3_+$ with constant contact angle $\theta$ along $\p\mbR^3_+$,  are half-affine planes.
\end{corollary}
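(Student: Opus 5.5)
The plan is to reduce the statement directly to Theorem \ref{Thm:Bernstein-aniso} by taking $F=F_\theta$, $F_\theta(\xi)=\abs{\xi}-\cos\theta\left<e_1,\xi\right>$. Three things are needed: that $F_\theta$ is an admissible integrand, that the AMSE for $F_\theta$ is the classical MSE, and that the anisotropic free boundary condition is the constant contact angle condition.

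\textbf{Step 1: admissibility of $F_\theta$.} Clearly $F_\theta$ is one-homogeneous and smooth on $\mbR^{n+1}\setminus\{0\}$. Positivity on $\mbS^n$ follows from
\[
F_\theta(\xi)\geq(1-\abs{\cos\theta})\abs{\xi}>0,\qquad \theta\in(0,\pi).
\]
For uniform convexity, observe that $F_\theta$ is the support function of the unit ball centered at $-\cos\theta\, e_1$. Indeed, the support function of $\overline{B_1(c)}$ is $\abs{\xi}+\left<c,\xi\right>$. Since $\abs{\cos\theta}<1$, this ball contains the origin in its interior. Hence both the Wulff shape and its polar body $\{F_\theta<1\}$ are smooth and uniformly convex. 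So $F_\theta$ is a smooth, uniformly elliptic integrand in the sense of the introduction.

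\textbf{Step 2: the AMSE is the MSE.} Here
\[
f(y)=F_\theta(-y,1)=\sqrt{1+\abs{y}^2}+\cos\theta\, y_1,
\qquad
Df(y)=\frac{y}{\sqrt{1+\abs{y}^2}}+\cos\theta\, e_1.
\]
The added term is a constant vector field, so its divergence vanishes and
\[
{\rm div}\bigl(Df(Du)\bigr)={\rm div}\left(\frac{Du}{\sqrt{1+\abs{Du}^2}}\right).
\]
Thus \eqref{defn:AMSE} for $F_\theta$ is exactly the classical MSE \eqref{defn:MSE}.

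\textbf{Step 3: the boundary condition is the contact angle condition.} On $\p\mbR^n_+$ we have $\bar N=-e_1$, so \eqref{defn:anisotropic-free-bdry-Intro} reads
\[
\frac{u_1}{\sqrt{1+\abs{Du}^2}}=-\cos\theta .
\]
The graph normal is $\nu=(-Du,1)/\sqrt{1+\abs{Du}^2}$, so $\left<\nu,e_1\right>=-u_1/\sqrt{1+\abs{Du}^2}$. The condition therefore becomes $\left<\nu,e_1\right>=\cos\theta$, i.e. $\left<\nu,\bar N\right>=-\cos\theta$. This says that the graph meets the wall $\p\mbR^n_+\times\mbR$ at the constant angle $\theta$, with the convention of \cite{WWXZ26}. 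Equivalently, as in \eqref{eq:<nu_F,e_1>=0}, the anisotropic normal of $F_\theta$ is $\nu_F=\nu-\cos\theta\, e_1$ and is tangent to the wall.

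\textbf{Step 4: conclusion.} A minimal graph over $\mbR^3_+$ with constant contact angle $\theta$ is therefore a $C^2$ solution of \eqref{defn:AMSE}–\eqref{defn:anisotropic-free-bdry-Intro} with $F=F_\theta$ and $n=3$. Theorem \ref{Thm:Bernstein-aniso} then gives that $u$ is affine, so the graph is a half-affine hyperplane; the boundary condition forces $u_1=-\cos\theta\sqrt{1+\abs{Du}^2}$. Conversely, any such affine function is a solution, which gives the stated classification.

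The only point requiring care is Step 3. One has to fix the sign convention for $\theta$ (the angle between $\nu$ and $-\bar N$ versus $\nu$ and $\bar N$) so that it matches the capillary gauge as defined in \cite{PM15,WWXZ26}. The rest is direct substitution.
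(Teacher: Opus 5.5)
Your proposal is correct and follows the paper's argument: the paper also obtains this corollary by inserting the capillary gauge $F_\theta(\xi)=\abs{\xi}-\cos\theta\langle e_1,\xi\rangle$ into Theorem \ref{Thm:Bernstein-aniso}, and your Steps 1--3 spell out the checks the paper leaves implicit (admissibility of $F_\theta$, reduction of ${\rm div}(Df(Du))$ to the MSE, and the boundary condition $\langle \nu,e_1\rangle=\cos\theta$). The sign-convention issue you flag is harmless, because replacing $\theta$ by $\pi-\theta$ keeps it in $(0,\pi)$.
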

The {\em capillary minimal graphs} have recently attracted considerable attention.
For general $\theta\in(0,\pi)$, the Bernstein theorem for capillary minimal graphs over $\mbR^n_+$ follows from a more general result of Hong--Saturnino \cite{HS23} when $n=2$ (see also \cite{LZZ24,MP21}).
For $n=3$ and $n=4$, it was proved by Edelen--Li--Zhu \cite{ELZ26} and Wang--Zhang \cite{WZ26}.
For $5\le n\le 7$, the corresponding statement holds under additional restrictions on the contact angle $\theta$; see \cite{LZZ24,ELZ26,WZ26}.
It remains an open and challenging problem to determine whether these restrictions can be removed.

\subsection{Idea of the proof}

As said, the main difficulty in proving Theorem \ref{thm:replacement} lies in the fact that \eqref{defn:AMSE} is non-uniformly elliptic unless $\abs{Du}$ is bounded. To solve the mixed boundary problem, we follow the idea of Simon \cite[Section 5]{Simon76} by approximating the anisotropic mixed boundary problem through a family of uniformly elliptic mixed boundary problems as follows: 
for $0<\tau\leq1$, let 
\eq{\label{defn:f_tau}
f_\tau(y)
\coloneqq f(y)+\frac\tau2f^2(y),\quad y\in\mbR^n,
}
and consider 
\eq{\label{eq:uniform elliptic-PDE}
 \begin{cases}
 {\rm div}(Df_\tau(Dw))=0 & \text{in }\Om_\varrho,\\
 \langle Df_\tau(Dw),e_1\rangle=0 & \text{on }\Gamma_\varrho,\\
 \lim_{\Omega_\varrho\ni x\to y}w(x)=\phi(y)
 &  \text{for every }y\in S_\rho\setminus E_\varrho.
 \end{cases}
}
Since $Df_\tau=(1+\tau f)Df$, the free boundary condition of the approximated equations is equivalent to the original one \eqref{defn:anisotropic-free-bdry-Intro}.
Moreover, \eqref{eq:uniform elliptic-PDE} is in fact the Euler-Lagrange equation of the minimization problem of the following functional:
\eq{\label{eq:Atau}
\left\{\mathscr A_\tau(v)
=\int_{\Om_\varrho} f_\tau(Dv)\rd x: v\in W^{1,2}(\Om_\varrho), v|_{{S_\varrho}}
=\phi\right\}.
}
One can show that $f_\tau$ is strictly convex for each $\tau\in(0,1]$, and that there exists a unique minimizer $w_\tau$ of \eqref{eq:Atau}, see Proposition \ref{prop:mixed boundary problem}.
To be able to pass $w_\tau$ to the solution $w$ of \eqref{eq:replacement-PDE} in the spirit of \cite{Simon76}, we need to send $\tau\searrow0$ and have a nice control of a suitable norm of $w_\tau$, independent of $\tau\in(0,1]$.
A key ingredient is the following local estimate independent of the choice of $\tau$ (see Theorem \ref{uniform gradient estimate} below):
\eq{
\sup_{B^{n+}_{R/2}(x)\cup\Gamma_{R/2}(x)}\abs{Dw_\tau}
\leq C(n,F)\exp\left[C(n,F)\left(1+\frac{{\rm osc}_{B^{n+}_{2R}(x)}w_\tau}{R}\right)^{n+3}\right].
}
We will prove this key estimate by adapting the integral method proof of our previous boundary gradient estimate \eqref{eq:gradient-esti-WWXZ26} in \cite{WWXZ26}, which relies crucially on the variational structure of the anisotropic free boundary problem.

We now turn to the proof of Theorem \ref{Thm:Bernstein-aniso}.
It is well-known that an essential difficulty in the study of anisotropic minimal surfaces, in contrast to the isotropic case, is the absence of a monotonicity formula (cf. \cite{Allard74}).
In proving the anisotropic Bernstein theorem, the lack of a monotonicity formula accounts for the fact that, the blow-down limit of the anisotropic minimal graph is no longer a cone. Therefore, the De Giorgi argument \cite{DeGiorgi65,Simons68} cannot be applied directly.

To prove Theorem \ref{Thm:Bernstein-aniso}, we argue in the spirit of Simon's contradiction proof \cite{Simon77}, adapted to the anisotropic free boundary settings. Due to the non-trivial boundary condition \eqref{defn:anisotropic-free-bdry-Intro}, there are mainly two new difficulties that arise.
The first difficulty concerns the removability of singularities of the solution to the AMSE \eqref{defn:AMSE} at the free boundary point.
In the interior case, Simon's argument \cite[Theorem A]{Simon77} uses a regular solution of the Dirichlet problem for replacement. Such a solution exists by the solvability of the Dirichlet problem in \cite[Section 5]{Simon76}.
In our case, our Theorem \ref{thm:replacement} leads to Corollary \ref{Prop:removable-singularity} and solves the problem.

The second difficulty is the establishment of a Harnack-type dichotomy of $\nu_4\coloneqq\left<\nu,e_4\right>$, see \cite[Proof of Lemma 1]{Simon77}, which uses crucially a Harnack estimate for $\nu_4$ shown in Schoen-Simon-Almgren's celebrated work \cite[Lemma 2.7]{SSA77}. We note that the Schoen-Simon-Almgren estimate cannot be directly applied in our case, again mainly due to the non-trivial boundary condition \eqref{defn:anisotropic-free-bdry-Intro}. 
To address the issue, we introduce the quantity $V_F\coloneqq\frac{\nu_4}{F(\nu)}$, and the key observation is that, on anisotropic minimal hypersurface in $\mbR^{n+1}_+$,
\eq{
\Delta_F V_F
=&-2 g\left(\nabla \log F(\nu), \nabla_F\left(V_F\right)\right)-\frac{V_F}{F(\nu)} \operatorname{tr}_g\left(h_F^2\right),
}
together with the boundary condition
$g\left(\nabla_F V_F, \mu\right) \equiv 0$. Here $\mu$ is the classical outer unit conormal along the boundary of a hypersurface, and this in particular shows that $\na_FV_F$ is tangential along the boundary.  See Section \ref{Sec:2} below for the precise notations concerning anisotropy. Consequently, the classical boundary Harnack inequality (cf., \cite[Theorem 5.44]{Lieberman13}) holds for $V_F$, which in turn gives us the required Harnack estimate for $\nu_4$.

\

\noindent{\em The rest of  the paper  is organized as follows.} In Section \ref{Sec:2}, we recall some useful facts on elliptic integrand and anisotropic geometry and collect some basic facts from geometric measure theory.
In Section \ref{Sec:3}, we show the existence of a sufficiently regular solution to the mixed boundary problem (Theorem \ref{thm:replacement}).
In Section \ref{sec:4} we prove the removability of singularities of AMSE at the free boundary points (Corollary \ref{Prop:removable-singularity}).
In Section \ref{Sec:5}, we prove Theorem \ref{Thm:Bernstein-aniso}.

\

\noindent{\em Acknowledgments.}
This work was partly carried out while W. Wei was visiting University of Freiburg supported by the Alexander von Humboldt research fellowship, and while X. Zhang was visiting Xiamen University.
The authors would like to thank Ernst Kuwert for bringing the reference \cite{Giusti76} into our attention, and thank Yuchen Bi for many helpful discussions.

\

\noindent{\em On the use of AI.}
ChatGPT 5.6 Pro assisted the authors in identifying the ansatz \eqref{defn:f_tau}, and in obtaining the estimates needed for the existence and regularity of the mixed boundary problem (Theorem \ref{thm:replacement}).
The authors  verified all arguments and take full responsibility for the completeness and correctness of the proofs.

\section{Preliminaries}\label{Sec:2}
Let $\bar D, \bar{\rm div},\left<\cdot,\cdot\right>$ denote the Euclidean gradient, divergence, and scalar product on $\mbR^{n+1}$, let $D,\,\, {\rm div}$ denote the Euclidean gradient, divergence on $\mbR^n$.
We denote by $B_r(X)$ the Euclidean open ball of radius $r$ in $\mbR^{n+1}$ with center at $X\in\mbR^{n+1}$, and adopt the shorthand $B_r=B_r(0)$.
We also denote by $B_r^n(x)$ the Euclidean open ball of radius $r$ in $\mbR^n=\{x_{n+1}=0\}$, with center $x\in\mbR^n$.
For balls truncated by the half-space $\mbR^{n+1}_+$ (or $\mbR^n_+$), we use the notations $B^+_r(X)=B_r(X)\cap\mbR^{n+1}_+$ (or $B^{n+}_r(X)=B^n_r(X)\cap\mbR^n_+$) and the shorthand $B^+_r=B^+_r(0)$.

We adopt the following notations when considering the topology of $\mbR^{n+1}$: we denote by $\overline{E}$ the topological closure of $E\subset\mbR^{n+1}$, by $\p E$ the topological boundary of $E$, and by $E\De F$ the difference of two sets $E,F\subset\mbR^{n+1}$.
In terms of the subspace topology (relative topology), let $A$ be a topological space and $S$ be a subspace of $A$, we use ${\rm cl}_A(S)$ to denote the closure of $S$ in the topological space $A$.

\subsection{Elliptic integrand}

Let $F:\mbR^{n+1}\ra\mbR_+$ be a smooth, uniformly elliptic integrand.
Precisely, $F\in C^{\infty}(\mbR^{n+1}\setminus\{0\})$ and is positive, one-homogeneous function on $\mbR^{n+1}$ such that $\bar D^2 F^2(z)$ is positive definite for all $z\ne 0$.

We record the following useful properties of $F$:
\begin{itemize}
    \item For any $z\in\mbR^{n+1}\setminus\{0\}$,
    \eq{\label{eq:<DF(z),z>=F(z)}
    \left<\bar DF(z),z\right>=F(z).
    }
    \item Since $F$ is 1-homogeneous, it is well-known that
\eq{\label{eq:Hessian-F}
\bar D^2F(z)(\cdot,z)=0,\quad\forall z\in\mbR^{n+1}\setminus\{0\}.
}
\end{itemize}
The properties of $F$ yield the following standard estimates.
\begin{lemma}\label{lem:structure of f}
There are positive constants $c(F)$ and $C(F)$ such that, for every
$p,\xi\in\mbR^n$,
\begin{align}
 c(F)\sqrt{1+|p|^2}&\le f(p)
 \le C(F)\sqrt{1+|p|^2},\label{eq:f-growth}\\
 |Df(p)|&\le C(F),\label{eq:Df-bound}\\
 \frac{c(F)}{\sqrt{1+|p|^2}}
 \left(|\xi|^2-\frac{\langle p,\xi\rangle^2}{1+|p|^2}\right)
 &\le f_{ij}(p)\xi_i\xi_j
 \le\frac{C(F)}{\sqrt{1+|p|^2}}
 \left(|\xi|^2-\frac{\langle p,\xi\rangle^2}{1+|p|^2}\right).\label{eq:D2f-global}
\end{align}
In particular, $D^2f(p)$ is positive definite for every finite $p$, and $Df$ is strictly monotone:
\eq{\label{eq:strict-monotone}
 \langle Df(p)-Df(q),p-q\rangle>0
 \quad\text{whenever }p\neq q.
}
\end{lemma}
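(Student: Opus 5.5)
The plan is to reduce every bound to the homogeneity of $F$ and the uniform ellipticity of $F$ on $\mbS^n$, via the identity $f(p)=F(-p,1)=\sqrt{1+|p|^2}\,F(\nu_p)$ with $\nu_p:=(-p,1)/\sqrt{1+|p|^2}\in\mbS^n$.

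\emph{Growth and gradient bounds.} Since $F$ is continuous and positive on the compact set $\mbS^n$, we have $c\le F(\nu_p)\le C$, which gives \eqref{eq:f-growth} at once. For \eqref{eq:Df-bound}, the chain rule gives $Df(p)=-(\bar D_1F,\dots,\bar D_nF)(-p,1)$. Because $\bar DF$ is $0$-homogeneous, $\bar DF(-p,1)=\bar DF(\nu_p)$, and this is bounded by $\sup_{\mbS^n}|\bar DF|$.

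\emph{Hessian bounds.} First, $f_{ij}(p)\xi_i\xi_j=\bar D^2F(-p,1)(\hat\xi,\hat\xi)$ with $\hat\xi=(\xi,0)$. Since $\bar D^2F$ is $(-1)$-homogeneous, this equals $\frac{1}{\sqrt{1+|p|^2}}\bar D^2F(\nu_p)(\hat\xi,\hat\xi)$. By \eqref{eq:Hessian-F}, $\bar D^2F(\nu)(\nu,\cdot)=0$, so only the component of $\hat\xi$ tangential to $\nu_p$ matters: $\hat\xi^T=\hat\xi-\langle\hat\xi,\nu_p\rangle\nu_p$. Its length is $|\hat\xi^T|^2=|\xi|^2-\frac{\langle p,\xi\rangle^2}{1+|p|^2}$, since $\langle\hat\xi,\nu_p\rangle=-\langle p,\xi\rangle/\sqrt{1+|p|^2}$. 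It therefore suffices to show that $\bar D^2F(\nu)$, restricted to $\nu^\perp$, has eigenvalues in $[c,C]$ uniformly for $\nu\in\mbS^n$.

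\emph{The step needing care.} The upper bound follows from smoothness and compactness. For the lower bound, use $\bar D^2(F^2)=2\bar DF\otimes\bar DF+2F\bar D^2F$ together with \eqref{eq:<DF(z),z>=F(z)}. For a unit vector $v\perp\nu$, write $w=v+t\nu$ and evaluate: $\bar D^2F^2(\nu)(w,w)=2(\langle\bar DF,v\rangle+tF)^2+2F\,\bar D^2F(v,v)$. Choosing $t=-\langle\bar DF(\nu),v\rangle/F(\nu)$ kills the first term, so $2F\,\bar D^2F(v,v)=\bar D^2F^2(\nu)(w,w)\ge\lambda|w|^2\ge\lambda$. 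Here $\lambda>0$ is the minimum of the smallest eigenvalue of $\bar D^2F^2$ on $\mbS^n$, which is positive by continuity and compactness. Dividing by $2\max F$ gives the lower bound.

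\emph{Consequences.} Positive definiteness of $D^2f(p)$ follows because $|\xi|^2-\frac{\langle p,\xi\rangle^2}{1+|p|^2}\ge\frac{|\xi|^2}{1+|p|^2}>0$ for $\xi\neq0$, by Cauchy--Schwarz. For \eqref{eq:strict-monotone}, integrate along the segment:
\[
\langle Df(p)-Df(q),p-q\rangle=\int_0^1D^2f(q+s(p-q))(p-q,p-q)\,\rd s>0.
\]
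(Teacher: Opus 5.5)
Your proof is correct and follows the same route as the paper. Homogeneity reduces everything to $\mbS^n$, \eqref{eq:Hessian-F} removes the component along $(-p,1)$, you evaluate $\bar D^2F$ at $z=(-p,1)$ on $(\xi,0)$, and you integrate $D^2f$ along the segment for strict monotonicity. Your argument via $\bar D^2(F^2)=2\bar DF\otimes\bar DF+2F\bar D^2F$, choosing $t$ to cancel the first term, also proves the uniform lower bound for $\bar D^2F$ on $\nu^\perp$; the paper only asserts this bound here, though it uses the same identity later in Lemma \ref{lem:Gq-coercive}.
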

\begin{proof}
By the $1$-homogeneity of $F$ we have
\eq{
c(F)\abs{z}
\leq F(z)
\leq C(F)\abs{z},\quad\forall z\in\mbR^{n+1},
}
and that $\bar DF$ is zero-homogeneous and hence bounded on $\mbS^n$.
By the fact that $Df(p)=-(\bar D_1F,\ldots,\bar D_nF)(-p,1)$ for any $p\in\mbR^n$, we deduce \eqref{eq:f-growth} and \eqref{eq:Df-bound}.

To show \eqref{eq:D2f-global}, we first note that by \eqref{eq:Hessian-F} and the $-1$-homogeneity of $\bar D^2F$, one has: for any fixed $z\neq0\in\mbR^{n+1}$ and any $Y\in\mbR^{n+1}$,
\eq{
\frac{c(F)}{\abs{z}}\abs{Y^\perp}^2
\leq\bar D^2F(z)[Y,Y]
\leq\frac{C(F)}{\abs{z}}\abs{Y^\perp}^2,
}
where $Y^\perp=Y-\frac{\left<Y,z\right>}{\abs{z}^2}z$ is the component of $Y$ orthogonal to $z$.
Letting $z=(-p,1)$, $Y=(-\xi,0)$ for any $p,\xi\in\mbR^n$, and noting that
\eq{
D^2f(p)[\xi,\xi]
=\bar D^2F(z)[Y,Y],
}
we deduce \eqref{eq:D2f-global}.
Finally, integrating the positive
quadratic form $D^2f$ along the segment from $q$ to $p$ gives
\eqref{eq:strict-monotone}.
This completes the proof.

\end{proof}

\begin{lemma}\label{lem:normal controlled by tangential}
There exists $C(F)>0$ such that for  every $p=(p_1,p')\in\mbR^n$ satisfying $\left< Df(p),e_1\right>=0$, there holds 
\eq{\label{eq:transversality}
 \abs{p_1}
 \leq C(F)\sqrt{1+\abs{p'}^2}.
}
\end{lemma}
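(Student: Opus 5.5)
Write $z=(-p,1)\in\mbR^{n+1}$, so that $f(p)=F(z)$ and, by the chain rule, $\langle Df(p),e_1\rangle=-\bar D_1F(z)$. The hypothesis is therefore that $\bar D_1F(z)=0$. By the zero-homogeneity of $\bar DF$, this is a condition only on the direction $z/\abs{z}\in\mbS^n$. The plan is to show that the set
\[
\Sigma\coloneqq\{\omega\in\mbS^n:\bar D_1F(\omega)=0\}
\]
stays a uniform distance away from the two poles $\pm e_1$. Then $\abs{\omega_1}\le 1-\delta$ on $\Sigma$ for some $\delta=\delta(F)>0$. Applied to $\omega=z/\abs{z}$, this gives $\abs{p_1}\le(1-\delta)\abs{z}=(1-\delta)\sqrt{p_1^2+\abs{p'}^2+1}$. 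Squaring and rearranging,
\[
\bigl(1-(1-\delta)^2\bigr)p_1^2\le(1-\delta)^2\bigl(1+\abs{p'}^2\bigr),
\]
which is \eqref{eq:transversality} with $C(F)=(1-\delta)\big/\sqrt{1-(1-\delta)^2}$.

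To see that $\pm e_1\notin\Sigma$, use \eqref{eq:<DF(z),z>=F(z)} at $z=\pm e_1$: it gives $\pm\bar D_1F(\pm e_1)=\langle\bar DF(\pm e_1),\pm e_1\rangle=F(\pm e_1)>0$, so $\bar D_1F(\pm e_1)\ne0$. Since $\bar D_1F$ is continuous on the compact set $\mbS^n$, the set $\Sigma$ is closed, hence compact, and it misses $\pm e_1$. Consequently $\max_{\Sigma}\abs{\omega_1}<1$, and we set $\delta\coloneqq1-\max_\Sigma\abs{\omega_1}$. If $\Sigma$ is empty, the hypothesis $\langle Df(p),e_1\rangle=0$ is never satisfied and there is nothing to prove.

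If a quantitative constant is preferred, one can instead use \eqref{eq:<DF(z),z>=F(z)} together with $\bar D_1F(z)=0$ to write
\[
F(z)=\langle\bar DF(z),z\rangle=\langle\bar DF(z),z-z_1e_1\rangle\le\sup_{\mbS^n}\abs{\bar DF}\,\abs{z-z_1e_1}.
\]
Combined with $F(z)\ge c(F)\abs{z}$, this yields $\abs{z}\le C(F)\sqrt{\abs{p'}^2+1}$, and hence $\abs{p_1}\le\abs{z}\le C(F)\sqrt{1+\abs{p'}^2}$. This version is more direct, and it is the one I would actually write down.

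No step poses a real obstacle. The only points needing care are the sign convention $f(y)=F(-y,1)$, which makes $\langle Df(p),e_1\rangle=-\bar D_1F(z)$, and the use of the Euler identity \eqref{eq:<DF(z),z>=F(z)} to exclude the $e_1$-direction.
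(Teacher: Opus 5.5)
Both of your arguments are correct. Your first sketch is essentially the paper's proof. The paper also evaluates the Euler identity \eqref{eq:<DF(z),z>=F(z)} at $\pm e_1$ to get $\pm\bar D_1F(\pm e_1)=F(\pm e_1)>0$. It then uses continuity of $\bar D_1F$ on $\mbS^n$ to find $\delta(F)$ with $\bar D_1F\neq0$ on $B_\delta(\pm e_1)\cap\mbS^n$, deduces $\abs{\nu_1}\le 1-\delta^2/2$ for $\nu=(-p,1)/\sqrt{1+\abs{p}^2}$, and converts this via $\abs{p_1}/\sqrt{1+\abs{p'}^2}=\abs{\nu_1}/\sqrt{1-\nu_1^2}$.

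The version you say you would actually write takes a genuinely different route. Instead of applying Euler's identity at the poles and then invoking continuity and compactness, you apply it at $z=(-p,1)$ itself. There the hypothesis $\bar D_1F(z)=0$ kills the $e_1$-component. Cauchy--Schwarz together with $F(z)\ge \min_{\mbS^n}F\,\abs{z}$ then gives $\sqrt{1+\abs{p}^2}=\abs{z}\le C(F)\sqrt{1+\abs{p'}^2}$, with the explicit constant $C(F)=\max_{\mbS^n}\abs{\bar DF}/\min_{\mbS^n}F$.

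What this buys:
\begin{itemize}
\item The constant is explicit and depends only on zeroth- and first-order data of $F$. In the paper, $\delta$ comes from an unquantified modulus of continuity of $\bar D_1F$ near $\pm e_1$.
\item The bound comes out directly in the form $\sqrt{1+\abs{p'}^2}/\sqrt{1+\abs{p}^2}\ge c(F)$. This is exactly what the paper uses in the corollary that follows, where it shows $\langle\mu,-e_1\rangle\ge c(F)$.
\end{itemize}

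The paper's route, in turn, makes the geometry visible. The anisotropic free boundary condition keeps the unit normal a definite distance away from $\pm e_1$, i.e.\ the graph meets $\p\mbR^{n+1}_+$ transversally, and this is the viewpoint reused later in the paper.
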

\begin{proof}
By \eqref{eq:<DF(z),z>=F(z)}, we have
\eq{
 \bar D_1F(e_1)=F(e_1)>0,
 \quad
 \bar D_1F(-e_1)=-F(-e_1)<0.
}
By continuity on the unit sphere, there is $\delta=\delta(F)\in(0,1)$ such
that
\eq{\label{positive part of D1F}
 \bar D_1F>0\quad\text{on }B_\delta(e_1)\cap\mathbb S^n,
 \quad
 \bar D_1F<0\quad\text{on }B_\delta(-e_1)\cap\mathbb S^n.
}

Now let $p$ satisfy $\left< Df(p),e_1\right>=0$ and set $\nu=\frac{(-p,1)}{\sqrt{1+|p|^2}}$.
Since $\langle Df(p),e_1\rangle=-\bar D_1F(-p,1)$ and $\bar DF$ is
zero-homogeneous, we have $\bar D_1F(\nu)=0$. 
It follows from \eqref{positive part of D1F} that $\nu$ lies outside $B_\delta (e_1)$ and $B_{\delta}(-e_1)$. If $\nu_1\geq0$, then
\eq{
 \de^2
 \leq\abs{\nu-e_1}^2
 =2(1-\nu_1),
}
whereas if $\nu_1\leq0$, then
\eq{
 \de^2
 \leq\abs{\nu+e_1}^2
 =2(1+\nu_1).
}
Thus in both cases, we have $\abs{\nu_1}\leq1-\frac{\de^2}{2}$.
Moreover, by the definition of $\nu$, 
\eq{
 \frac{\abs{p_1}}{\sqrt{1+\abs{p'}^2}}
 =\frac{\abs{\nu_1}}{\sqrt{1-\nu_1^2}},
}
which readily implies \eqref{eq:transversality}.

\end{proof}

\subsection{\texorpdfstring{$F$}{F}-minimizers}

Let $A\subset\mbR^{n+1}$ be open.
We call a Lebesgue measurable set $E\subset\mbR^{n+1}$ a \emph{set of locally finite perimeter in $A$} if, for every compact set $K\subset A$,
\eq{
\sup\left\{\int_E\bar{{\rm div}}T\rd X: T\in C_c^1(A;\mbR^{n+1}),\quad\spt T\subset K,\quad\sup_A|T|\leq1
\right\}<\infty.
}
It follows that the characteristic function $\chi_E\in BV_{loc}(A)$. Moreover, by De Giorgi's structure theorem \cite[Theorem 15.9]{Mag12}, the reduced boundary $\partial^\ast E$ and the measure-theoretic outer unit normal $\nu_E$ are such that, in $A$,
\eq{\label{defn:reduced-bdry-nu_E}
D\chi_E
=-\nu_E\mcH^n\llcorner\partial^\ast E,\quad
|D\chi_E|
=\mcH^n\llcorner\partial^\ast E.
}
Up to modification of sets of measure zero (see \cite[(15.3)]{Mag12}), we have and will always assume in this paper that
\eq{
\overline{\p^\ast E}
=\p E.
}

For every Borel set $G\subset A$, we define the \emph{anisotropic
perimeter} of $E$ in $G$ by
\eq{\label{defn:anisotropic-perimeter}
P_F(E;G)
\coloneqq
\int_{G\cap\partial^*E}F(\nu_E)\,\rd\mcH^n
=
\int_G F\left(-\frac{\rd D\chi_E}{\rd|D\chi_E|}\right)
\rd|D\chi_E|.
}

\begin{definition}\label{Defn:F-minimizer}
\normalfont
Let $A\subset\mbR^{n+1}$ be open. A set
$E\subset\mbR^{n+1}_+=\{x\in\mbR^{n+1}:x_1>0\}$ of locally finite perimeter in $A$ is called an
\emph{$F$-minimizer in $(A,\mbR^{n+1}_+)$} if
\eq{\label{defn:F-minimizer}
P_F(E;\mbR^{n+1}_+\cap W)\leq P_F(G;\mbR^{n+1}_+\cap W)
}
for every open set $W\subset\subset A$ and every set $G\subset \mbR^{n+1}_+$ of locally
finite perimeter in $A$ satisfying $E\De G\subset\subset W$.
\end{definition}

\subsection{Anisotropic free boundary minimal hypersurfaces}
We first collect some well-known facts.

Consider any $C^2$-hypersurface $M$ embedded in $\mbR^{n+1}_+$ with a chosen unit normal vector field, denoted by $\nu$.
Suppose that $M$ has non-empty boundary $\p M$, which lies entirely in $\p\mbR^{n+1}_+$, so that $M$ meets $\p\mbR^{n+1}_+$ transversally along $\p M$.
We denote by $\mu$  the outer unit co-normal to $\p M$ with respect to $M$, and then by $\bar\nu$ the outer unit co-normal to $\p M$ in $\p\mbR^{n+1}_+$, such that (noting that $-e_1$ is the outer unit normal to $\p\mbR^{n+1}_+$) the bases $\{\nu,\mu\}$ and $\{\bar\nu,-e_1\}$ have the same orientation in the normal bundle of $\p M$ (as a co-diminsion $2$ submanifold in $\mbR^{n+1}$).
Hence it is easy to see the following relations
\eq{\label{eq:mu-relation}
\mu=&-\left<\nu,-e_1\right>\bar\nu+\left<\mu,-e_1\right>(-e_1),\\
\nu=&\left<\mu,-e_1\right>\bar\nu+\left<\nu,-e_1\right>(-e_1).
}
Equivalently,
\eq{\label{eq:bar-nu-relation}
\bar\nu=&-\left<\nu,-e_1\right>\mu+\left<\mu,-e_1\right>\nu,\\
-e_1=&\left<\mu,-e_1\right>\mu+\left<\nu,-e_1\right>\nu.
}

Let $g$ denote the metric on $M$ induced from embedding in $\mbR^{n+1}$.
Denote by $\na$ the  Levi-Civita connection and  by ${\rm div}_M$ the  divergence operator on  $(M,g)$.
We use $h$ to denote the second fundamental form of $(M,g)\subset\mbR^{n+1}$, i.e.,
\eq{
h(X,Y)
=\left<\bar D_X\nu,Y\right>,\quad\forall X,Y\in TM.
}

Let $\nu_F=\bar DF(\nu)$ be the \textit{anisotropic normal} of $M$ and 
$A_F=\bar D^2F(\nu)$, where $\nu$ is the upwards pointing unit normal along $M$.
By \eqref{eq:Hessian-F}, $A_F$ can be identified as a positive definite symmetric $2$-tensor on $M$.
Hence, we define $h_F\coloneqq A_F\circ h$ as the \textit{anisotropic second fundamental form} on $M$; and $H_F\coloneqq{\rm tr}_g(h_F)$ as the \textit{anisotropic mean curvature}.
Note also that $h_F$ satisfies
\eq{\label{defn:h_F}
h_F(X,Y)
=\left<\bar D_X\nu_F,Y\right>
=(A_F\circ h)(X,Y),\quad\forall X,Y\in TM.
}

Let $A^{-1}_F$ denote the inverse of $A_F$, the \textit{$F$-anisotropic metric} on $M$ is then defined as
\eq{
g_F(X,Y)
=g(A^{-1}_FX,Y),\quad X,Y\in TM.
}
The \textit{$F$-anisotropic gradient} of a function $\phi$ defined on $M$ is given by
\eq{
\na_F\phi
=A_F\nabla \phi
=\bar D^2F(\nu)\na\phi,
}
and the \textit{$F$-Laplacian} is then given by
\eq{
\De_F\phi
={\rm div}_M(\na_F\phi)
={\rm div}_M\left(\bar D^2F(\nu)\na\phi\right).
}

\begin{definition}\label{Defn:anisotropic-free-bdry}
\normalfont
We call $M$ an anisotropic minimal hypersurface if $H_F=0$ on $M$.
We say that $M$ has free boundary in the anisotropic sense in $\mbR^{n+1}_+$, if $\left<\nu_F,e_1\right>=0$ on $\p M\subset\p\mbR^{n+1}_+$.
\end{definition}

As an immediate consequence of Lemma \ref{lem:normal controlled by tangential}, we have:
\begin{corollary}
Let $u$ be a $C^2$-function on $\mbR^n_+$, whose graph $M$ has free boundary in the anisotropic sense in $\mbR^{n+1}_+$.
Then the outward unit conormal $\mu$ of $\p M$ satisfies
\eq{\label{eq:mu-transverse}
 \langle\mu,-e_1\rangle
 =\frac{\sqrt{1+\abs{D'u}^2}}{\sqrt{1+\abs{Du}^2}}
 \ge c(F)>0.
}
\end{corollary}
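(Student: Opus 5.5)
The claim is the last corollary: $\langle\mu,-e_1\rangle=\sqrt{1+|D'u|^2}/\sqrt{1+|Du|^2}\ge c(F)>0$ along $\p M$. The plan is to compute $\mu$ explicitly from the graph parametrization, and then bound the resulting ratio using Lemma \ref{lem:normal controlled by tangential}.

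\emph{Step 1: identify $\nu$, $\bar\nu$ and $\mu$.} Along $\p M$, which lies over $\{x_1=0\}$, the upward unit normal is $\nu=(-Du,1)/\sqrt{1+|Du|^2}$. Write $Du=(u_1,D'u)$. The outward conormal of $\p M$ inside $\p\mbR^{n+1}_+$ is the unit vector in $\{x_1=0\}$ normal to $\p M=\{(0,x',u(0,x'))\}$, up to sign. This gives
\[
\bar\nu=\pm\frac{(0,-D'u,1)}{\sqrt{1+|D'u|^2}}.
\]
We do not need to track the sign from \eqref{eq:mu-relation}; it suffices that, by \eqref{eq:bar-nu-relation}, $-e_1=\langle\mu,-e_1\rangle\mu+\langle\nu,-e_1\rangle\nu$. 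Hence
\[
\langle\mu,-e_1\rangle^2=1-\langle\nu,e_1\rangle^2=1-\frac{u_1^2}{1+|Du|^2}=\frac{1+|D'u|^2}{1+|Du|^2}.
\]
Since $\mu$ is the outward conormal and $M$ lies over $\{x_1>0\}$, the component $\langle\mu,-e_1\rangle$ is nonnegative. Taking the nonnegative square root gives the stated identity.

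\emph{Step 2: the sign.} To confirm the sign, note that $\mu$ is tangent to $M$ and orthogonal to $T\p M$. Explicitly, $\mu$ is proportional to the projection of $-e_1$ onto $TM$ with the $\nu$-component removed, namely $-e_1+\langle \nu,e_1\rangle\nu$. This vector has
\[
\langle -e_1+\langle \nu,e_1\rangle\nu,\,-e_1\rangle=1-\nu_1^2\geq0,
\]
and it points out of $M$, since decreasing $x_1$ leaves the domain.

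\emph{Step 3: the lower bound.} The anisotropic free boundary condition \eqref{defn:anisotropic-free-bdry-Intro} with $\bar N=-e_1$ gives $\langle Df(Du),e_1\rangle=0$. Lemma \ref{lem:normal controlled by tangential} then yields $|u_1|\le C(F)\sqrt{1+|D'u|^2}$. Therefore
\[
1+|Du|^2=1+|D'u|^2+u_1^2\le (1+C(F)^2)(1+|D'u|^2),
\]
so the ratio is at least $(1+C(F)^2)^{-1/2}=:c(F)$.

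The main point requiring care is Step 2, matching the orientation conventions of \eqref{eq:mu-relation}. The computation via $1-\nu_1^2$ together with the outward direction settles it.
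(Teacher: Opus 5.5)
Your proposal is correct and follows essentially the same route as the paper. The paper quotes the formula $\mu=\frac{\sqrt{1+|Du|^2}}{\sqrt{1+|D'u|^2}}\bigl(-e_1-\frac{u_1}{\sqrt{1+|Du|^2}}\nu\bigr)$, which is exactly your normalized vector $-e_1+\langle\nu,e_1\rangle\nu$ from Step 2, and then bounds the ratio using Lemma \ref{lem:normal controlled by tangential} and $f_1(Du)=0$ on the boundary, just as in your Step 3.
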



\begin{proof}
As computed in \cite[Proof of Lemma A.1]{WW26}, we can write
\eq{
 \mu
 =\frac{\sqrt{1+\abs{Du}^2}}{\sqrt{1+|D'u|^2}}
 \left(-e_1-\frac{u_1}{\sqrt{1+\abs{Du}^2}}\nu\right),
}
by Lemma \ref{lem:normal controlled by tangential} and the fact that $f_1(Du)=0$ along the boundary, we thus obtain
\eq{
 \langle\mu,-e_1\rangle
 =\frac{\sqrt{1+|D'u|^2}}{\sqrt{1+\abs{Du}^2}}
 =\left(1+\frac{|u_1|^2}{1+|D'u|^2}\right)^{-1/2}
 \geq\left(1+C(F)^2\right)^{-1/2}.
}

\end{proof}

\begin{lemma}[{\cite[Proposition 4.1]{GX25}}]\label{Lem:principal-direction-mu}
Let $M$ be a hypersurface having free boundary in the anisotropic sense in $\mbR^{n+1}_+$, then the outer unit co-normal $\mu$ is an anisotropic principal direction of $\p M$ in $M$, i.e., $h_F(\tau,\mu)=0$ for any $\tau\in T(\p M)$. 
\end{lemma}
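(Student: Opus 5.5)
The plan is to differentiate the anisotropic free boundary condition $\langle \nu_F,e_1\rangle=0$ along $\partial M$ and use the decomposition of $-e_1$ in the normal bundle of $\partial M$ given in \eqref{eq:bar-nu-relation}.

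First, fix $\tau\in T(\partial M)$. Since $\partial M\subset\partial\mbR^{n+1}_+$ and $\langle\nu_F,e_1\rangle\equiv0$ on $\partial M$, differentiating in the direction $\tau$ gives
\begin{equation*}
\langle \bar D_\tau \nu_F, e_1\rangle = 0 .
\end{equation*}

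Second, I check that $\bar D_\tau\nu_F$ is tangent to $M$. We have $\bar D_\tau\nu_F=\bar D^2F(\nu)[\bar D_\tau\nu]$. By \eqref{eq:Hessian-F} and the symmetry of $\bar D^2F(\nu)$, its image is orthogonal to $\nu$. Hence $\bar D_\tau\nu_F\in TM$, and by \eqref{defn:h_F} its components are $h_F(\tau,\cdot)$.

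Third, I substitute the second identity of \eqref{eq:bar-nu-relation}, namely $-e_1=\langle\mu,-e_1\rangle\mu+\langle\nu,-e_1\rangle\nu$. The $\nu$-component pairs to zero with $\bar D_\tau\nu_F\in TM$, so
\begin{equation*}
0=\langle \bar D_\tau\nu_F,-e_1\rangle=\langle\mu,-e_1\rangle\,\langle \bar D_\tau\nu_F,\mu\rangle=\langle\mu,-e_1\rangle\, h_F(\tau,\mu).
\end{equation*}

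The only remaining point, and the main one to verify, is that $\langle\mu,-e_1\rangle\neq0$. This is the transversality assumed in the setting, which I would also derive directly from the boundary condition. Suppose $\langle\mu,-e_1\rangle=0$ at some boundary point. Then by \eqref{eq:mu-relation} we would have $\nu=\pm e_1$ there. By \eqref{eq:<DF(z),z>=F(z)} this gives $\langle\nu_F,e_1\rangle=\langle\bar DF(\pm e_1),e_1\rangle=\pm F(\pm e_1)\neq0$, contradicting the free boundary condition. For graphs, \eqref{eq:mu-transverse} even gives a quantitative lower bound.

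Dividing by $\langle\mu,-e_1\rangle$ then yields $h_F(\tau,\mu)=0$ for every $\tau\in T(\partial M)$, which is the claim.
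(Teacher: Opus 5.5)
Your proof is correct and follows the same route as the paper. You differentiate $\langle\nu_F,e_1\rangle=0$ along $\tau\in T(\partial M)$, expand $e_1$ in the basis $\{\mu,\nu\}$ via \eqref{eq:bar-nu-relation}, and divide by $\langle\mu,e_1\rangle\neq0$. Beyond that, you check explicitly that $\bar D_\tau\nu_F\perp\nu$, which the paper uses tacitly to drop the $\nu$-term, and you derive $\langle\mu,-e_1\rangle\neq0$ from the free boundary condition via $\langle\bar DF(\pm e_1),e_1\rangle=\pm F(\pm e_1)$, where the paper simply invokes the transversality assumption; both are small but welcome refinements.
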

\begin{proof}
Using \eqref{eq:bar-nu-relation} we compute
\eq{
0
=\na_\tau\left<\nu_F,e_1\right>
=\left<\bar D_\tau\nu_F,e_1\right>
=&\left<\bar D_\tau\nu_F,\left<\mu,e_1\right>\mu+\left<\nu,e_1\right>\nu\right>\\
=&\left<\mu,e_1\right>\left<\bar D_\tau\nu_F,\mu\right>+\left<\nu,e_1\right>\left<\bar D_\tau\nu_F,\nu\right>
=\left<\mu,e_1\right>h_F\left(\tau,\mu\right).
}
Since $M$ meets $\p\mbR^{n+1}_+$ transversely, we have $\left<\mu,e_1\right>\neq0$ on $\p M$, the assertion then follows.
\end{proof}

However, on some occasions it is not easy to use $\mu$, but more convenient to use the following modified one.

\begin{definition}\label{defn:anisotropic-free-bdry-hypersurface-version}
\normalfont
Let $M$ be an embedded hypersurface in $\mbR^{n+1}$ with non-empty boundary $\p M$,
define the \textit{anisotropic co-normal} along $\p M$ as
\eq{\label{defn:mu_F}
\mu_F
\coloneqq\left<\nu_F,\nu\right>\mu-\left<\nu_F,\mu\right>\nu,
}
which clearly lies in the normal bundle of $\p M$.
\end{definition}
It was observed recently in \cite{GX25} that by the definitions of $\mu_F,\nu_F$, and the relation \eqref{eq:bar-nu-relation},  free boundary in the anisotropic sense can be characterized in terms of $\mu_F$ as follows:
\begin{lemma}[{\cite[Proposition 3.2]{GX25}}]
Let $M$ be an embedded hypersurface in $\mbR^{n+1}_+$ with non-empty boundary $\p M$ lying in $\p\mbR^{n+1}_+$, such that $M$ meets $\p\mbR^{n+1}_+$ transversally.
Then along $\p M$, one has
\eq{\label{eq:mu_F-relation}
\left<\mu_F,\bar\nu\right>
=&-\left<\nu_F,-e_1\right>,\\
\left<\mu_F,-e_1\right>
=&\left<\nu_F,\bar\nu\right>.
}
\end{lemma}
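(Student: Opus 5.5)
We prove \eqref{eq:mu_F-relation} by direct linear algebra in the two-dimensional normal bundle of $\p M$, which is spanned by either of the orthonormal bases $\{\nu,\mu\}$ and $\{\bar\nu,-e_1\}$. Write $a\coloneqq\left<\mu,-e_1\right>$ and $b\coloneqq\left<\nu,-e_1\right>$. By \eqref{eq:mu-relation} and \eqref{eq:bar-nu-relation} these two bases are related by a rotation:
\begin{equation*}
\bar\nu=-b\,\mu+a\,\nu,\qquad -e_1=a\,\mu+b\,\nu,\qquad a^2+b^2=1 .
\end{equation*}
Since the two bases have the same orientation, the rotation has determinant one, and $a^2+b^2=1$ is its normalization. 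These identities are all the input we need.

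\emph{Step 1.} The vector $\mu_F$ lies in the normal bundle of $\p M$, so it is determined by its pairings with $\mu$ and $\nu$. From \eqref{defn:mu_F},
\begin{equation*}
\left<\mu_F,\mu\right>=\left<\nu_F,\nu\right>,\qquad \left<\mu_F,\nu\right>=-\left<\nu_F,\mu\right>.
\end{equation*}
In other words, $\mu_F$ is obtained from the normal-bundle projection of $\nu_F$, namely $\left<\nu_F,\nu\right>\nu+\left<\nu_F,\mu\right>\mu$, by the quarter rotation $J$ defined by $\nu\mapsto\mu$, $\mu\mapsto-\nu$.

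\emph{Step 2.} We pair $\mu_F$ with $\bar\nu$ and with $-e_1$, using the expansions above:
\begin{equation*}
\left<\mu_F,\bar\nu\right>=-b\left<\nu_F,\nu\right>-a\left<\nu_F,\mu\right>,\qquad
\left<\mu_F,-e_1\right>=a\left<\nu_F,\nu\right>-b\left<\nu_F,\mu\right>.
\end{equation*}

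\emph{Step 3.} We compare these with the components of $\nu_F$ against $-e_1$ and $\bar\nu$, again expanding in the basis $\{\mu,\nu\}$:
\begin{equation*}
\left<\nu_F,-e_1\right>=a\left<\nu_F,\mu\right>+b\left<\nu_F,\nu\right>,\qquad
\left<\nu_F,\bar\nu\right>=-b\left<\nu_F,\mu\right>+a\left<\nu_F,\nu\right>.
\end{equation*}
The first of these is exactly the negative of $\left<\mu_F,\bar\nu\right>$, and the second coincides with $\left<\mu_F,-e_1\right>$. This gives both relations in \eqref{eq:mu_F-relation}.

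There is no analytic obstacle here. The only point requiring care is the orientation bookkeeping, that is, confirming that the signs in \eqref{eq:mu-relation} correspond to a genuine rotation. Geometrically, the statement says that $J$ commutes with the rotation carrying $\{\nu,\mu\}$ to $\{\bar\nu,-e_1\}$, which holds because rotations of the plane commute. Transversality is used only to make $\mu$ and $\bar\nu$ well defined.
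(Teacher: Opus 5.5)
Your proof is correct and is essentially the argument the paper indicates: the paper cites \cite{GX25} and notes that the identities follow from the definition of $\mu_F$ together with \eqref{eq:bar-nu-relation}, i.e.\ you expand $\bar\nu$ and $-e_1$ in the basis $\{\mu,\nu\}$ and pair them with $\mu_F$ and $\nu_F$. Step 3 is valid even though $\nu_F$ need not lie in the normal bundle of $\p M$, because only the normal vectors $\bar\nu$ and $-e_1$ are expanded there.
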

From its definition, it is clear that $\mu_F$ always lies in the normal space of $\p M$, which is spanned by $\mu$ and $\nu$, while in general the anisotropic normal $\nu_F$ does not. Thus it is more convenient to use $\mu_F$ instead of $\nu_F$ in the anisotropic setting.
\begin{lemma}\label{Lem:mu_F-perpendicular}
A hypersurface has free boundary in the anisotropic sense in $\mbR^{n+1}_+$ if and only if 
\eq{
\mu_F\perp\p\mbR^{n+1}_+.
}
\end{lemma}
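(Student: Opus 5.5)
The claim follows directly from the preceding lemma \cite[Proposition 3.2]{GX25}, i.e. \eqref{eq:mu_F-relation}, together with the fact that $\mu_F$ lies in the normal bundle of $\p M$. The plan is to show that, along $\p M$, the vector $\mu_F$ has a zero $\bar\nu$-component exactly when $\left<\nu_F,e_1\right>=0$.

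First I fix the geometry at a boundary point. Since $\p M\subset\p\mbR^{n+1}_+$ and $M$ meets $\p\mbR^{n+1}_+$ transversally, the normal space of $\p M$ in $\mbR^{n+1}$ is two-dimensional. It is spanned by $\{\mu,\nu\}$, and equally by $\{\bar\nu,-e_1\}$, by \eqref{eq:mu-relation}--\eqref{eq:bar-nu-relation}. Here $\bar\nu$ is a unit vector tangent to $\p\mbR^{n+1}_+$ and orthogonal to $T(\p M)$, while $-e_1$ is the unit normal of $\p\mbR^{n+1}_+$. By Definition \ref{defn:anisotropic-free-bdry-hypersurface-version}, $\mu_F$ lies in this plane, so it decomposes as
\[
\mu_F=\left<\mu_F,\bar\nu\right>\bar\nu+\left<\mu_F,-e_1\right>(-e_1).
\]
Therefore $\mu_F\perp\p\mbR^{n+1}_+$, i.e. $\mu_F$ is parallel to $e_1$, if and only if $\left<\mu_F,\bar\nu\right>=0$. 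Note that $\mu_F$ is automatically orthogonal to $T(\p M)$; the only remaining tangential direction of $\p\mbR^{n+1}_+$ at the point is $\bar\nu$.

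Next I invoke the first identity in \eqref{eq:mu_F-relation}, $\left<\mu_F,\bar\nu\right>=-\left<\nu_F,-e_1\right>=\left<\nu_F,e_1\right>$. Hence $\left<\mu_F,\bar\nu\right>=0$ on $\p M$ if and only if $\left<\nu_F,e_1\right>=0$ on $\p M$. By Definition \ref{Defn:anisotropic-free-bdry}, the latter is exactly free boundary in the anisotropic sense, which proves both directions.

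There is no genuine obstacle. The one point needing care is the meaning of ``$\mu_F\perp\p\mbR^{n+1}_+$'': it means $\mu_F$ is orthogonal to the whole tangent space $\p\mbR^{n+1}_+$ at boundary points. That tangent space is $T(\p M)\oplus\mathrm{span}\{\bar\nu\}$, and $\mu_F\perp T(\p M)$ holds by construction, so only the $\bar\nu$-component must be checked. I would also remark that $\mu_F\neq0$, since $\left<\mu_F,\mu\right>=\left<\nu_F,\nu\right>=F(\nu)>0$ by \eqref{eq:<DF(z),z>=F(z)}. Thus the perpendicularity is non-degenerate, and under the free boundary condition $\mu_F=\left<\mu_F,-e_1\right>(-e_1)$ is a nonzero multiple of $e_1$.
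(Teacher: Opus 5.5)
Your proof is correct and follows the paper's argument. Both combine the first identity in \eqref{eq:mu_F-relation}, which gives $\left<\mu_F,\bar\nu\right>=\left<\nu_F,e_1\right>$, with the fact that $\mu_F$ lies in the normal plane of $\p M$, spanned by $\bar\nu$ and $-e_1$. Beyond the paper, you spell out both directions of the equivalence and the non-degeneracy $\left<\mu_F,\mu\right>=F(\nu)>0$, which the paper leaves implicit.
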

\begin{proof}
It follows 
from \eqref{eq:mu_F-relation} and the free boundary condition \eqref{eq:<nu_F,e_1>=0} that 
$\left<\mu_F,\bar\nu\right>=0$. On the other hand, $\mu_F$ lies in the normal space of $\p M$. The assertion follows from the above two facts.
\end{proof}

To prove our main result, we need the following differential equation on anisotropic minimal hypersurface, in the spirit of \cite[Lemma 2.11]{WWXZ26}.

\begin{lemma}\label{Lem:V_F-PDE}
Let $M$ be a hypersurface in $\mbR^{n+1}_+$ with vanishing anisotropic mean curvature, let
$\nu_{n+1}(x)\coloneqq\left<\nu(x),e_{n+1}\right>$ where $e_{n+1}=(0,\cdots,0,1)$, and $V_F(x)\coloneqq\frac{\nu_{n+1}(x)}{F(\nu(x))}$, then
\eq{\label{eq:De_F-V_F}
\Delta_F V_F
=-2 g\left(\nabla \log F(\nu), \nabla_F\left(V_F\right)\right)-\frac{V_F}{F(\nu)} \operatorname{tr}_g\left(h_F^2\right).
}
\end{lemma}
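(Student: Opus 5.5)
The plan is a direct computation on $M$ using the quotient rule for $\Delta_F$, the Jacobi-field structure of $\nu_{n+1}$, and a second-order identity for $F(\nu)$.

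\emph{Step 1: the Jacobi equation for $\nu_{n+1}$.} Fix the constant vector $E=e_{n+1}$ and work in a local orthonormal frame $\{e_i\}$ of $TM$. Since $\bar D_{e_i}\nu=h_{ij}e_j$, we get $\nabla_i\nu_{n+1}=h_{ij}\langle e_j,E\rangle$. Hence
\[
\nabla_F\nu_{n+1}=A_F\nabla\nu_{n+1}=h_F(E^T,\cdot)^\sharp .
\]
Taking the divergence uses two facts. First, $\nabla_j E^T=-\nu_{n+1}h_{jk}e_k$. Second, there is a Codazzi-type identity for $h_F=\bar D\nu_F|_{TM}$: since $\nu_F=\bar DF(\nu)$ is a gradient, $\nabla_k(h_F)_{ij}$ is symmetric in $k,i$ modulo terms that vanish tangentially, so ${\rm div}_M h_F=\nabla H_F$. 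With $H_F=0$ this should give
\[
\Delta_F\nu_{n+1}=-\nu_{n+1}\,{\rm tr}_g(h_F h).
\]
This is the expected anisotropic Jacobi equation, since translations generate Jacobi fields with normal speed $\langle E,\nu\rangle$. As a cross-check I would also compare with the analogous identity for $\langle\nu_F,E\rangle$, where $\nabla\langle\nu_F,E\rangle=h_F(E^T)$, in the spirit of \cite[Lemma 2.11]{WWXZ26}.

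\emph{Step 2: the equation for $F(\nu)$.} We have $\nabla F(\nu)=\langle\nu_F,\bar D\nu\rangle=h(\nu_F^T)$. Repeating the computation of Step 1 with the non-constant field $\nu_F$ produces first-order terms in $\bar D\nu_F=h_F$. Together with the $1$-homogeneity relations \eqref{eq:<DF(z),z>=F(z)} and \eqref{eq:Hessian-F}, this yields an expression for $\Delta_F F(\nu)$ involving ${\rm tr}_g(h_F^2)$ and ${\rm tr}_g(h_Fh)F(\nu)$.

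\emph{Step 3: the quotient.} Writing $V_F=\nu_{n+1}/F(\nu)$, the product rule gives
\[
\Delta_F V_F=\frac{\Delta_F\nu_{n+1}}{F}-\frac{\nu_{n+1}\Delta_F F}{F^2}-2g\bigl(\nabla\log F,\nabla_F V_F\bigr),
\]
where the last term collects the cross gradient terms exactly in the stated form. Inserting Steps 1 and 2, the ${\rm tr}_g(h_Fh)$ contributions should cancel, leaving $-\frac{V_F}{F(\nu)}{\rm tr}_g(h_F^2)$.

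The main obstacle is Step 2. This means computing $\Delta_F F(\nu)$ precisely and checking that it equals $F\,{\rm tr}_g(h_Fh)$ minus ${\rm tr}_g(h_F^2)$, so that the cancellation in Step 3 goes through. The anisotropic Codazzi identity for $h_F$ (the third derivative $\bar D^3F$ appears) needs careful bookkeeping. I would first test the formula in the isotropic case $F=|\cdot|$, where it reduces to $\Delta\nu_{n+1}=-|h|^2\nu_{n+1}$, to fix signs.
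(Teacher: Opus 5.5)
Your route is the paper's route: the quotient rule for $\Delta_F$, plus $\Delta_F\nu_{n+1}=-\nu_{n+1}\operatorname{tr}_g(h_Fh)$ (the paper writes $\operatorname{tr}_g(A_Fh^2)$), plus an identity for $\Delta_F F(\nu)$. The paper simply cites these identities from \cite{Winklmann05-ArchMath} and \cite{WWXZ26}. Your Steps 1 and 3 are correct as written. However, the target you set for Step 2 has the wrong sign. The correct identity is
\[
\Delta_F F(\nu)=\operatorname{tr}_g(h_F^2)-F(\nu)\operatorname{tr}_g(h_Fh),
\]
not $F(\nu)\operatorname{tr}_g(h_Fh)-\operatorname{tr}_g(h_F^2)$. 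With your sign, inserting into Step 3 gives $-2V_F\operatorname{tr}_g(h_Fh)+\frac{V_F}{F(\nu)}\operatorname{tr}_g(h_F^2)$. The $\operatorname{tr}_g(h_Fh)$ terms then add instead of cancel, and the remaining curvature term has the wrong sign. So the cancellation you assert does not happen with the identity as you stated it. Your proposed isotropic sanity check cannot detect this. For $F=|\cdot|$ one has $F(\nu)\equiv 1$ and $h_F=h$, so both versions of the identity reduce to $0=0$.

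The correct sign falls out of a short computation, and no $\bar D^3F$ bookkeeping is needed.
\begin{itemize}
\item Write $S_FX=\bar D_X\nu_F=A_F(SX)$, where $S$ is the shape operator. Since $S_F$ is the differential of the map $\nu_F$, whose derivatives are tangent to $M$, second derivatives commute. Hence the Codazzi identity $(\nabla_XS_F)Y=(\nabla_YS_F)X$ holds exactly.
\item Differentiating $\langle\nu_F,\nu\rangle=F(\nu)$ gives $\nabla F(\nu)=S\nu_F^T$, so $\nabla_FF(\nu)=S_F\nu_F^T$.
\item Since $\nu_F^T=\nu_F-F(\nu)\nu$, one gets $\nabla_X\nu_F^T=S_FX-F(\nu)SX$.
\end{itemize}
Therefore
\[
\Delta_FF(\nu)=\nu_F^T(H_F)+\operatorname{tr}(S_F^2)-F(\nu)\operatorname{tr}(S_FS),
\]
and $H_F=0$ gives the identity above. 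The same Codazzi argument with $E^T$ in place of $\nu_F^T$, using $\nabla_XE^T=-\nu_{n+1}SX$, reproduces your Step 1. With the corrected sign, the cancellation in Step 3 goes through and you recover exactly the paper's proof.
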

\begin{proof}
By direct computations we obtain
\eq{
\Delta_F V_F
=&-\frac{2 g\left(\nabla \nu_{n+1}, \nabla_F(F(\nu))\right)}{F(\nu)^2}+\frac{2 \nu_{n+1}}{F(\nu)^3} g\left(\nabla(F(\nu)), \nabla_F(F(\nu))\right)-\frac{\nu_{n+1}}{F(\nu)^2} \operatorname{tr}_g\left(h_F^2\right)\\
=&-2 g\left(\frac{\nabla F}{F}, \frac{1}{F} \nabla_F \nu_{n+1}-\frac{\nu_{n+1}}{F^2} \nabla_F F\right)-\frac{\nu_{n+1}}{F(\nu)^2} \operatorname{tr}_g\left(h_F^2\right)\\
=&-2 g\left(\nabla \log F(\nu), \nabla_F\left(V_F\right)\right)-\frac{V_F}{F(\nu)} \operatorname{tr}_g\left(h_F^2\right),
}
where in the first equality we have used 
$$\Delta_F \nu_{n+1}+{\rm tr}_g(A_Fh^2)\nu_{n+1}=0,$$
and 
$$\Delta_F F+{\rm tr}_g(A_Fh^2)F={\rm tr}_g(h_F^2).$$
For the proof see  for instance \cite[Theorem 3.1]{Winklmann05-ArchMath} 
and  \cite[Lemma 2.8]{WWXZ26}.
\end{proof}


Similar to \cite[Lemma 2.12]{WWXZ26} we prove the following tangential property of $V_F$ along the boundary.
\begin{lemma}\label{Lem:tangential-property-bdry}
Let $M$ be a hypersurface with free boundary in the anisotropic sense in $\mbR^{n+1}_+$, let
$\nu_{n+1}(x)\coloneqq\left<\nu(x),e_{n+1}\right>$, and $V_F(x)\coloneqq\frac{\nu_{n+1}(x)}{F(\nu(x))}$, then
\eq{
g\left(\nabla_F V_F, \mu\right)\equiv0\text{ along the free boundary of }M.
}
\end{lemma}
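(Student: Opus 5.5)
The plan is to compute the differential of $V_F$ directly, reduce $g(\nabla_F V_F,\mu)$ to a single anisotropic second fundamental form term, and then kill that term using Lemma \ref{Lem:principal-direction-mu} together with a planar-algebra identity coming from the free boundary condition $\langle\nu_F,e_1\rangle=0$.

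\emph{Step 1: formula for $dV_F$.} For $X\in TM$ we have
\[
X(\nu_{n+1})=\langle \bar D_X\nu,e_{n+1}\rangle=h(X,e_{n+1}^T)
\]
and
\[
X(F(\nu))=\langle \bar DF(\nu),\bar D_X\nu\rangle=h(X,\nu_F^T),
\]
where ${}^T$ denotes the tangential projection onto $TM$. Hence
\[
X(V_F)=\frac{1}{F(\nu)}\,h(X,W),\qquad W\coloneqq e_{n+1}^T-V_F\,\nu_F^T\in TM .
\]

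\emph{Step 2: reduction to $h_F$.} Since $\nabla_F V_F=A_F\nabla V_F$ and $A_F$ is $g$-symmetric, we get
\[
g(\nabla_F V_F,\mu)=dV_F(A_F\mu)=\frac{1}{F(\nu)}h(A_F\mu,W)=\frac{1}{F(\nu)}h_F(\mu,W),
\]
using $h_F=A_F\circ h$. On $\partial M$, decompose $W=\langle W,\mu\rangle\mu+W^{\partial}$ with $W^{\partial}\in T(\partial M)$. By Lemma \ref{Lem:principal-direction-mu}, $h_F(\mu,W^\partial)=0$. So it suffices to show
\[
\langle W,\mu\rangle=\langle e_{n+1},\mu\rangle-V_F\langle\nu_F,\mu\rangle=0,
\]
that is, $F(\nu)\langle e_{n+1},\mu\rangle=\nu_{n+1}\langle\nu_F,\mu\rangle$. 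Here we use $\langle\nu_F,\nu\rangle=F(\nu)$ by \eqref{eq:<DF(z),z>=F(z)}.

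\emph{Step 3: the key identity (main point).} Let $P$ be the orthogonal projection onto the normal plane $N$ of $\partial M$ in $\mbR^{n+1}$. This plane is spanned both by $\{\mu,\nu\}$ and by $\{\bar\nu,e_1\}$. Since $e_{n+1}\perp e_1$ and $\langle\nu_F,e_1\rangle=0$ (the free boundary condition), both $Pe_{n+1}$ and $P\nu_F$ lie in $N\cap e_1^\perp=\mathrm{span}\{\bar\nu\}$. So they are parallel. Consequently the vectors $(\langle e_{n+1},\mu\rangle,\langle e_{n+1},\nu\rangle)=(\langle e_{n+1},\mu\rangle,\nu_{n+1})$ and $(\langle \nu_F,\mu\rangle,\langle\nu_F,\nu\rangle)=(\langle\nu_F,\mu\rangle,F(\nu))$ are proportional, and their $2\times2$ determinant vanishes. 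This is exactly the identity required in Step 2, which completes the argument.

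The only delicate point is Step 3, namely recognizing that the free boundary condition forces both $\nu_F$ and $e_{n+1}$ to project into the same line of the normal plane; transversality is used implicitly through Lemma \ref{Lem:principal-direction-mu}.
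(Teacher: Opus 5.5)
Your proposal is correct and is essentially the paper's proof. The paper likewise writes $g(\nabla_F V_F,\mu)$ as $h_F(e_{n+1}^T,\mu)/F(\nu)-\nu_{n+1}h_F(\nu_F^T,\mu)/F(\nu)^2$, uses Lemma \ref{Lem:principal-direction-mu} to keep only the $h_F(\mu,\mu)$ parts, and concludes from $\langle\mu_F,e_{n+1}\rangle=0$ (Lemma \ref{Lem:mu_F-perpendicular}); this is exactly your determinant identity in Step 3, since $\mu_F$ is the $90^\circ$ rotation of $P\nu_F$ in the normal plane.

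One small correction: $h_F$ is not symmetric in general. In the paper's convention $h_F(X,Y)=\langle\bar D_X\nu_F,Y\rangle$, your quantity $h(A_F\mu,W)$ equals $h_F(W,\mu)$, not $h_F(\mu,W)$. The order $h_F(W,\mu)$ is the one in which Lemma \ref{Lem:principal-direction-mu} actually gives vanishing for $W\in T(\partial M)$, so the argument goes through once the arguments are written in that order.
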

\begin{proof}
Using \eqref{eq:<DF(z),z>=F(z)} we rewrite $V_F=\left<\nu_F,\nu\right>^{-1}\left<\nu,e_{n+1}\right>$.
By direct computations we obtain
\eq{
g(\na_FV_F,\mu)
=&\left<A_F[\na(V_F)],\mu\right>
=\left<A_F\left[\na\left(\left<\nu_F,\nu\right>^{-1}\left<\nu,e_{n+1}\right>\right)\right],\mu\right>\\
=&0-\left<\nu_F,\nu\right>^{-2}h_F\left(\nu^T_F,\mu\right)\left<\nu,e_{n+1}\right>+\left<\nu_F,\nu\right>^{-1}h_F(e^T_{n+1},\mu)\\
=&-\left<\nu_F,\nu\right>^{-2}h_F\left(\mu,\mu\right)\left(\left<\nu,e_{n+1}\right>\left<\nu_F,\mu\right>-\left<\nu_F,\nu\right>\left<\mu,e_{n+1}\right>\right)\\
=&\left<\nu_F,\nu\right>^{-2}h_F(\mu,\mu)\left<
{\left<\nu_F,\mu\right>\nu-\left<\nu_F,\nu\right>\mu},e_{n+1}\right>
\\
=&-F(\nu)^{-2}h_F(\mu,\mu) \langle{\mu_F},e_{n+1}\rangle=0.
}
For the third equality we have used Lemma \ref{Lem:principal-direction-mu},
while for the last one we have used 
Lemma \ref{Lem:mu_F-perpendicular}.
\end{proof}

\section{The mixed boundary problem}\label{Sec:3}
Fix $\varrho>0$, we continue to use the notations \eqref{defn:conventions-Om} with $\rho=\varrho$ in this Section, and consider the following mixed boundary problem:
\eq{
 \begin{cases}
 {\rm div}(Df(Dw))=0 & \text{in }\Omega_\varrho,\\
 \left< Df(Dw),e_1\right>=0 & \text{on }\Gamma_\varrho,\\
 \lim_{\Omega\ni x\to y}w(x)=\phi(y)
 &  \text{for every }y\in S_\varrho\setminus E_\varrho,
 \end{cases}
}
where $\phi$ is a $C^2$ function defined in a neighborhood of $\overline S_\varrho$.

Before we begin to prove Theorem \ref{thm:replacement}, we recall that Druet in \cite{druet2012classical} proved
the classical solvability of the contact angle problem for anisotropic equations of mean curvature type in a bounded domain by establishing a global gradient estimate. But this global gradient estimate is not sufficient for our purpose.

The following simple observation shows that the approximation \eqref{eq:uniform elliptic-PDE} is indeed uniformly elliptic.
\begin{lemma}\label{lem:Gq-coercive}
There exist $c(F),C(F)>0$ such that for all $p,\xi\in\mbR^n$, 
\eq{\label{eq:Gq-coercive}
 \abs{\xi}^2-\frac{\left< p,\xi\right>^2}{1+\abs{p}^2}
 +\left<Df(p),\xi\right>^2
 \geq c(F)\abs{\xi}^2
}
and 
\eq{\label{eq: uniformelliptic}
 c(F)\tau|\xi|^2
 \le f_{\tau,ij}(p)\xi_i\xi_j
 \le C(F)|\xi|^2.
}
\end{lemma}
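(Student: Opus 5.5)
The plan is to treat \eqref{eq:Gq-coercive} as a uniform lower bound for the quadratic form
\[
Q_p(\xi)\coloneqq\abs{\xi}^2-\frac{\left<p,\xi\right>^2}{1+\abs{p}^2}+\left<Df(p),\xi\right>^2,
\]
and then to obtain \eqref{eq: uniformelliptic} from the identity $D^2f_\tau=(1+\tau f)D^2f+\tau\,Df\otimes Df$ together with Lemma \ref{lem:structure of f}.

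\emph{Step 1: a bound on $\left<Df(p),p\right>$.} The first two terms of $Q_p$ have eigenvalue $1$ on $p^\perp$ and eigenvalue $(1+\abs p^2)^{-1}$ in the direction $\hat p=p/\abs p$. Degeneration can therefore only occur when $\abs p\to\infty$ and $\xi$ is close to $\hat p$, and there the term $\left<Df(p),\xi\right>^2$ must supply the bound. Put $z=(-p,1)$. Since $Df(p)_i=-\bar D_iF(z)$, Euler's relation \eqref{eq:<DF(z),z>=F(z)} gives
\[
\left<Df(p),p\right>=\sum_{i\le n}\bar D_iF(z)z_i=F(z)-\bar D_{n+1}F(z).
\]
Zero-homogeneity of $\bar DF$ bounds $\bar D_{n+1}F(z)$ by $C(F)$, and $F(z)\ge c(F)\abs z\ge c(F)\abs p$. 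Hence $\left<Df(p),\hat p\right>\ge c(F)/2$ as soon as $\abs p\ge R(F)$.

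\emph{Step 2: the two regimes for $\abs p$.} If $\abs p\le R$, then $Q_p(\xi)\ge \abs\xi^2/(1+R^2)$ and we are done. If $\abs p\ge R$, write $\xi=a\hat p+\eta$ with $\eta\perp p$, so that $Q_p(\xi)\ge\abs\eta^2+\left<Df(p),\xi\right>^2$.
\begin{itemize}
\item If $\abs\eta\ge\varepsilon\abs a$, then $Q_p(\xi)\ge\abs\eta^2\ge c\abs\xi^2$.
\item Otherwise $\abs\eta<\varepsilon\abs a$, and \eqref{eq:Df-bound} gives
\[
\abs{\left<Df(p),\xi\right>}\ge \tfrac{c}{2}\abs a-C\varepsilon\abs a\ge \tfrac c4\abs a .
\]
This holds for $\varepsilon=\varepsilon(F)$ small, so $Q_p(\xi)\ge c\abs\xi^2$ in this case too.
\end{itemize}
This case split is the only real obstacle. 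It amounts to the transversality of the anisotropic normal to the horizontal direction, which is encoded in the estimate $F(z)\ge c\abs z$.

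\emph{Step 3: the bounds for $f_\tau$.} Since $f_{ij}\ge0$,
\[
f_{\tau,ij}\xi_i\xi_j\ge \tau f\,f_{ij}\xi_i\xi_j+\tau\left<Df,\xi\right>^2.
\]
By \eqref{eq:f-growth} and \eqref{eq:D2f-global}, $f\,f_{ij}\xi_i\xi_j\ge c^2\bigl(\abs\xi^2-\left<p,\xi\right>^2/(1+\abs p^2)\bigr)$, because the factors $\sqrt{1+\abs p^2}$ cancel. Therefore $f_{\tau,ij}\xi_i\xi_j\ge \tau\min(c^2,1)\,Q_p(\xi)\ge c\tau\abs\xi^2$ by \eqref{eq:Gq-coercive}.

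For the upper bound, \eqref{eq:D2f-global} and \eqref{eq:f-growth} give $(1+\tau f)f_{ij}\xi_i\xi_j\le (1+C\sqrt{1+\abs p^2})\,C\abs\xi^2/\sqrt{1+\abs p^2}\le C\abs\xi^2$. Finally, $\tau\left<Df,\xi\right>^2\le C\abs\xi^2$ by \eqref{eq:Df-bound} and $\tau\le1$, which gives the upper bound in \eqref{eq: uniformelliptic}.
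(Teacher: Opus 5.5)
Your proof is correct, but for \eqref{eq:Gq-coercive} it takes a different route from the paper; your Step 3 is essentially the paper's argument for \eqref{eq: uniformelliptic}.

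\textbf{The paper's route.} It sets $z=(-p,1)$, $Y=(-\xi,0)$ and uses the identity
$\frac12\bar D^2(F^2)(z)[Y,Y]=f(p)f_{ij}(p)\xi_i\xi_j+\langle Df(p),\xi\rangle^2$.
The left side is at least $c(F)|\xi|^2$, because $\bar D^2(F^2)$ is zero-homogeneous and positive definite on the sphere. The upper bound in \eqref{eq:D2f-global}, together with \eqref{eq:f-growth}, bounds the right side by $C(F)\,Q_p(\xi)$.

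\textbf{Your route.} You split $\xi$ along $\hat p$ and $p^\perp$. You then use Euler's relation to get $\langle Df(p),p\rangle=F(z)-\bar D_{n+1}F(z)\ge c|p|-C$. For large $|p|$ this lets the term $\langle Df(p),\xi\rangle^2$ control the single degenerate direction $\hat p$.

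\textbf{Comparison.} The paper's argument is shorter and explains structurally why $Q_p$ is nondegenerate: up to constants it is the Hessian of $F^2$ restricted to horizontal vectors. Yours is elementary and gives explicit constants. It also shows that \eqref{eq:Gq-coercive} needs only positivity and one-homogeneity of $F$ plus boundedness of $\bar DF$, not the uniform convexity of the Wulff shape. Ellipticity enters your argument only in Step 3, through the lower bound in \eqref{eq:D2f-global}.
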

\begin{proof}
For any $p,\xi\in\mbR^n$, let $z=(-p,1)$ and $Y=(-\xi,0)$.
Since $\bar D^2(F^2)$ is zero-homogeneous and positive definite on $\mbS^n$, we have
\eq{
\bar D^2(F^2)(z)[Y,Y]
\geq c(F)\abs{Y}^2
=c(F)\abs{\xi}^2,
}
where $\bar D^2(F^2)=2F\bar D^2F+2\bar DF\otimes\bar DF$, and hence the LHS can be rewritten as
\eq{
\frac12\bar D^2(F^2)(z)[Y,Y]
=f(p)f_{ij}(p)\xi_i\xi_j+\left<Df(p),\xi\right>^2.
}
By \eqref{eq:f-growth} and \eqref{eq:D2f-global}, we thus deduce
\eq{
c(F)\abs{\xi}^2
\leq f(p)f_{ij}(p)\xi_i\xi_j+\left<Df(p),\xi\right>^2
\leq C(F)\left(\abs{\xi}^2-\frac{\left< p,\xi\right>^2}{1+\abs{p}^2}
 +\left<Df(p),\xi\right>^2\right),
}
which implies \eqref{eq:Gq-coercive}.

By direct computation,
\eq{\label{eq:D2-ftau}
 D^2f_\tau
 =(1+\tau f)D^2f+\tau Df\otimes Df.
}
Using Lemma \ref{lem:structure of f} and \eqref{eq:Gq-coercive}, we find
\eq{
D^2f_\tau(p)[\xi,\xi]
\geq c(F)\tau\left(\abs{\xi}^2-\frac{\left< p,\xi\right>^2}{1+\abs{p}^2}
 +\left<Df(p),\xi\right>^2\right)
\geq c(F)\tau\abs{\xi}^2,
}
which proves the lower bound in \eqref{eq: uniformelliptic}.
The upper bound follows similarly, and the proof is thus completed.
\end{proof}

For $0<\tau\leq1$ fixed, we now minimize \eqref{eq:Atau}.

\begin{proposition}\label{prop:mixed boundary problem}
For every $\tau\in(0,1]$ there is a unique minimizer
\eq{
 w_\tau\in W^{1,2}(\Omega_\varrho),
 \quad w_\tau |_{{S_\varrho}}
 =\phi,
}
of \eqref{eq:Atau}.
Moreover, the following properties hold:
\begin{enumerate}
    \item For every $\zeta\in W^{1,2}(\Omega_\varrho)$ with $\zeta=0$ on $S_\varrho$,
\eq{\label{eq:weak-ftau}
\int_{\Om_\varrho}\left<Df_\tau(Dw_\tau),D\zeta\right>\rd x=0.
}

\item There exists $C>0$ independent of the choice of $\tau$, such that \eq{\label{eq:energy-uniform}
 \int_{\Om_\varrho}\abs{Dw_\tau}\rd x\leq C,
 \text{ and }\quad 
 \tau\int_{\Omega_\varrho} f(Dw_\tau)^2\rd x\leq C.
}

\item For any $r\in(0,\varrho)$, there exists  $\gamma=\gamma(n,F,\tau)\in(0,1)$ such that $w_\tau\in C^{2, \gamma}(B^{n+}_{r}\cup\Gamma_{r})$ and satisfies
\eq{\label{eq:c2 solution to local equation}
 \begin{cases}
 {\rm div}(Df_\tau(Dw_\tau))=0 & \text{in }B^{n+}_{r},\\[1mm]
 \left<Df_\tau(Dw_\tau),e_1\right>=0 & \text{on }\Gamma_{r}.
 \end{cases}
}
\end{enumerate}
\end{proposition}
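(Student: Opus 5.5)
The argument is the direct method in the calculus of variations for the uniformly convex, quadratic-growth integrand $f_\tau$, followed by standard regularity theory for conormal problems. Let $\tilde\phi$ be a $C^2$ extension of $\phi$ to $\overline\Om_\varrho$; since $\phi$ is $C^2$ near $\overline S_\varrho$, we may take it with $\tilde\phi\in C^2(\overline\Om_\varrho)$ after a cutoff. The admissible class is $\tilde\phi+W^{1,2}_{0,S_\varrho}(\Om_\varrho)$, the closure of $C^1$ functions vanishing near $S_\varrho$.

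\textbf{Step 1: existence, uniqueness and the weak equation.} By \eqref{eq:f-growth}, $f_\tau(p)\ge \frac{\tau}{2}c(F)^2(1+|p|^2)$ and $f_\tau(p)\le C(1+|p|^2)$. Poincar\'e's inequality holds on $W^{1,2}_{0,S_\varrho}$ because $S_\varrho$ has positive $\mcH^{n-1}$-measure. Hence $\mathscr A_\tau$ is coercive on the admissible class, and it is weakly lower semicontinuous since $f_\tau$ is convex by \eqref{eq: uniformelliptic}. This gives a minimizer. Strict convexity of $f_\tau$ gives $Dw=Dw'$ for any two minimizers, and the common trace on $S_\varrho$ then forces $w=w'$. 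Since $|Df_\tau(p)|\le C(1+|p|)$, we may differentiate $t\mapsto \mathscr A_\tau(w_\tau+t\zeta)$ at $t=0$, which yields \eqref{eq:weak-ftau}.

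\textbf{Step 2: the $\tau$-uniform bounds \eqref{eq:energy-uniform}.} Compare $w_\tau$ with the competitor $\tilde\phi$:
\[
\int_{\Om_\varrho} f(Dw_\tau)+\tfrac\tau2 f(Dw_\tau)^2\,\rd x\le \int_{\Om_\varrho} f(D\tilde\phi)+\tfrac12 f(D\tilde\phi)^2\,\rd x\le C(\tilde\phi,F,\varrho),
\]
using $\tau\le1$. Then \eqref{eq:f-growth} gives $\int|Dw_\tau|\le c(F)^{-1}\int f(Dw_\tau)\le C$, and $\tau\int f(Dw_\tau)^2\le 2C$.

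\textbf{Step 3: regularity up to $\Gamma_r$.} For fixed $\tau$, the problem \eqref{eq:c2 solution to local equation} is a uniformly elliptic divergence-form equation. Its structure is smooth, with $c\tau|\xi|^2\le D^2f_\tau\,\xi\cdot\xi\le C|\xi|^2$ (by \eqref{eq: uniformelliptic}), and its boundary condition is the natural (conormal) one on the flat piece $\Gamma_\varrho$. Away from the corner $E_\varrho$ we proceed as follows.
\begin{itemize}
\item Reflect evenly across $\{x_1=0\}$: set $\hat w(x_1,x')=w_\tau(-x_1,x')$ and $\hat f_\tau(p_1,p')=f_\tau(-p_1,p')$. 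The weak formulation \eqref{eq:weak-ftau}, tested with functions supported in $B^{n+}_{r'}\cup\Gamma_{r'}$ for $r<r'<\varrho$, then becomes a weak equation on the full ball with a coefficient that is only measurable (piecewise smooth) across $x_1=0$. De Giorgi--Nash--Moser applied to difference quotients in the tangential directions $x'$ gives $D'w_\tau\in C^{\alpha}$.
\item Alternatively, and more cleanly, invoke the known theory for the conormal problem: Lieberman \cite[Chapters 4--5]{Lieberman13}, or \cite{druet2012classical}.
\end{itemize}
Either route gives $W^{2,2}_{loc}$ by the difference-quotient method in tangential directions. The normal second derivative is then recovered from the equation, and differentiating the equation shows that $Dw_\tau\in C^{0,\alpha}$ up to $\Gamma_r$. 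Once $Dw_\tau$ is H\"older continuous, the coefficients $D^2f_\tau(Dw_\tau)$ are H\"older, and Schauder estimates for the oblique (conormal) linear problem yield $w_\tau\in C^{2,\gamma}(B^{n+}_r\cup\Gamma_r)$. Integrating by parts in \eqref{eq:weak-ftau}, with test functions vanishing near $S_\varrho$, then gives \eqref{eq:c2 solution to local equation} pointwise, including the boundary condition on $\Gamma_r$.

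The main obstacle is the $C^{1,\alpha}$ step at $\Gamma$. The first thing to try is citing Lieberman's conormal $C^{1,\alpha}$ theory for variational problems with smooth uniformly convex integrands, which applies directly since $f_\tau$ has quadratic growth. The other subtlety is that no regularity near $E_\varrho$ is claimed, which is why the statement is restricted to $r<\varrho$.
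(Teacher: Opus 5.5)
Your proposal is correct and follows essentially the same route as the paper. The paper also gets existence by the direct method (coercivity from $f_\tau\gtrsim\tau(1+|p|^2)$ plus Poincar\'e), uniqueness from strict convexity of $f_\tau$, \eqref{eq:weak-ftau} by first variation, the $\tau$-uniform bounds by comparing with the extension $\tilde\phi$ using $\tau\le 1$, and the $C^{2,\gamma}$ regularity up to $\Gamma_r$ by citing standard conormal theory (Lieberman--Trudinger, Lieberman); your reflection/difference-quotient sketch is simply a more explicit account of what that citation provides.
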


\begin{proof}
Let $\tilde\phi\in W^{1,2}(\Om_\varrho)$ be an extension such that its trace on $S_\varrho$ is $\phi$, and let $V_0=\left\{\zeta\in W^{1,2}(\Om_\varrho):\zeta\mid_{S_\varrho}=0\right\}$.
By \eqref{eq:f-growth}, for each fixed $\tau>0$ we have
\eq{
\mathscr A_\tau(v)
\geq c(F)\tau\int_{\Om_\varrho}\abs{Dv}^2\rd x,\quad\forall v\in\tilde\phi+V_0.
}
Applying Poincar\'e inequality for $v-\tilde\phi\in V_0$, we have $\norm{v-\tilde\phi}_{L^2(\Om_\varrho)}\leq C(\Om_\varrho)\norm{Dv-D\tilde\phi}_{L^2(\Om_\varrho)}$, it follows that
\eq{
\norm{v}^2_{W^{1,2}(\Om_\varrho)}
\leq C(F,\Om_\varrho)\left(\norm{\tilde\phi}^2_{W^{1,2}(\Om_\varrho)}+\frac1\tau\mathscr A_\tau(v)\right),
}
implying that the functional $\mathscr A_\tau(v)$ is coercive. The existence of the minimizer then follows from the direct method. Moreover, $f_\tau$ is strictly convex thanks to \eqref{eq: uniformelliptic}, which shows that the minimizer is unique.

To proceed, we note that \eqref{eq:weak-ftau} follows by computing the first variation.
Comparing the minimizer $w_\tau$ with $\tilde\phi$, we find 
\eq{
\mathscr A_\tau(w_\tau)
=\int_{\Om_\varrho} f(Dw_\tau)\rd x+\frac\tau2\int_{\Om_\varrho} f^2(Dw_\tau)\rd x
\leq\int_{\Om_\varrho} f(D\tilde\phi)\rd x+\frac12\int_{\Om_\varrho} f^2(D\tilde\phi)\rd x
\eqqcolon C,
}
where we have used $\tau\leq1$.
Since $f(p)\geq c(F)\abs{p}$, this proves \eqref{eq:energy-uniform}. 

Finally, higher regularity of $w_\tau$ follows from the standard regularity argument for uniformly elliptic equations in divergence form with conormal boundary conditions,
cf. \cite{LT86,Lieberman88,Lieberman13}. Here we only point out that $\gamma$ is independent of $r\in(0,\varrho)$ but depends on $\tau$ due to \eqref{eq: uniformelliptic}. This completes the proof.
\end{proof}

In the spirit of \cite{Simon76}, to let $\tau\searrow0$ we need to establish the following gradient estimate for the approximated equation \eqref{eq:uniform elliptic-PDE}.
{
\begin{theorem}\label{uniform gradient estimate}
There exists a positive constant $C(n,F)$, depending only on $n$ and $F$ while independent of $\tau$, with the following properties.

If $x_0\in\Gamma_\varrho$ and $R>0$ satisfy $B^{n+}_{2R}(x_0)\subset\Om_\varrho$, also
\eq{
\Gamma_{2R}(x_0)
\coloneqq B^n_{2R}(x_0)\cap\p\mbR^n_+
\subset\Gamma_\varrho,
}
then there holds
\eq{\label{eq:uniform-gradient}
 \sup_{B^{n+}_{R/2}(x_0)\cup\Gamma_{R/2}(x_0)}|Dw_\tau|
 \le C(n,F)\exp\left[
C(n,F)\left(1+\frac{{\rm osc}_{B^{n+}_{2R}(x_0)}w_\tau}{R}\right)^{n+3}
 \right].
}
If $x_0\in\Om_\varrho$ and $B^n_{2R}(x_0)\subset\Om_\varrho$, then
\eq{\label{eq:uniform-gradient-interior-ball}
\sup_{B^n_{R/2}(x_0)}\abs{Dw_\tau}
\leq C(n,F)\exp\left[
C(n,F)\left(1+\frac{{\rm osc}_{B^{n}_{2R}(x_0)}w_\tau}{R}\right)^{n+3}
 \right].
}
\end{theorem}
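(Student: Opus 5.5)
We follow the integral (Bombieri--De Giorgi--Miranda / Simon) method as adapted to the free boundary in \cite{WWXZ26}, but applied to the graph of $w_\tau$ viewed with respect to the modified integrand. The key point is that $f_\tau$ is itself of the form needed: define on $\mbR^{n+1}$ the one-homogeneous function
\[
F_\tau(z)\coloneqq F(z)+\frac{\tau}{2}\frac{F(z)^2}{z_{n+1}}\quad(z_{n+1}>0),
\]
so that $f_\tau(p)=F_\tau(-p,1)$. This $F_\tau$ is not a uniformly elliptic integrand on the whole sphere, however. We therefore do not treat $F_\tau$ as a new integrand. Instead we work directly with the equation. The graph $M_\tau$ of $w_\tau$ satisfies an equation of the form $\div(Df(Dw))=-\tau\,\div(f\,Df(Dw))$, that is, $H_F=-\tau\langle Df(Dw),D f(Dw)\rangle/(1+\tau f)$-type lower-order terms. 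The plan is to rederive the two ingredients of \cite{WWXZ26} for $M_\tau$ with constants uniform in $\tau$:
\begin{enumerate}
\item a Sobolev/Michael--Simon-type inequality on the free boundary graph;
\item a differential inequality for $\log$ of the anisotropic area element $v_\tau\coloneqq (1+\tau f)f(Dw_\tau)$, or more conveniently $\log f(Dw_\tau)$.
\end{enumerate}

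\textbf{Step 1 (linearized equation for the gradient).} Differentiate $\div(Df_\tau(Dw))=0$ in directions $x_k$, $k\ge2$, which are tangential to $\Gamma_\varrho$. This yields that each $u_k=\p_k w_\tau$ solves the linear equation $\p_i(a^{ij}\p_j u_k)=0$ with $a^{ij}=f_{\tau,ij}(Dw_\tau)$. It also yields the conormal condition $a^{1j}\p_j u_k=0$ on $\Gamma$, obtained by differentiating $\langle Df_\tau,e_1\rangle=0$ tangentially. The direction $x_1$ is handled by Lemma \ref{lem:normal controlled by tangential}, which gives $|u_1|\le C(F)\sqrt{1+|D'u|^2}$ on $\Gamma$. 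From these we build the subsolution $\eta=\log(1+|D'w_\tau|^2)^{1/2}$, or better $\log f(Dw_\tau)$ combined with $\nu_{n+1}$ as in Lemma \ref{Lem:V_F-PDE}. By convexity (Jacobi-field structure) this quantity satisfies $\p_i(a^{ij}\p_j\eta)\ge c\,a^{ij}\p_i\eta\p_j\eta$ with a nonpositive-sign boundary flux. The crucial observation is that $a^{ij}=(1+\tau f)f_{ij}+\tau f_if_j$, and that the extra terms only help: they are nonnegative and come with the same conormal structure, since $Df_\tau=(1+\tau f)Df$. Hence all constants are independent of $\tau$.

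\textbf{Step 2 (integral iteration).} Test with $\zeta^2$ times powers of $\eta$, where $\zeta$ is a cutoff supported in $B_{2R}(x_0)$ and \emph{not} vanishing on $\Gamma$. This is allowed by \eqref{eq:weak-ftau}, and the boundary terms vanish by the conormal conditions. Then run De Giorgi/Moser iteration on $M_\tau$ using the weight $a^{ij}$, in the way Simon \cite[Section 4]{Simon76} and \cite{WWXZ26} do. The Sobolev inequality is obtained on the graph with measure $f(Dw)\,dx$, and near $\Gamma$ we reflect/extend. The oscillation enters through the standard estimate $\int_{B_R}\zeta^2 f(Dw)\,dx\le C R^{n-1}(R+{\rm osc}\,w)$. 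This estimate follows by testing the equation with $(w-\inf w)\zeta^2$, and the boundary term again drops. It is uniform in $\tau$ because $\tau f^2$ appears with a good sign. The iteration yields $\sup\eta\le C(1+{\rm osc}/R)^{n+3}$, and exponentiating gives \eqref{eq:uniform-gradient}. The interior estimate \eqref{eq:uniform-gradient-interior-ball} follows from the same argument with $\zeta$ compactly supported.

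\textbf{Main obstacle.} The hardest step is the Sobolev inequality uniform in $\tau$. The natural geometric object is the graph with the $F$-metric, but the equation is only $F$-minimal up to the $\tau$-perturbation. I would try to avoid geometry and work with the weighted Sobolev inequality on the graph, which depends only on graphicality and the $W^{1,1}$ bound. I would then absorb the $\tau$ terms using their sign, and check carefully that the Simon-type inequality $a^{ij}\ge c(1+|p|^2)^{-1/2}(\delta_{ij}-\nu_i\nu_j)$ from \eqref{eq:D2f-global} is all that is required.
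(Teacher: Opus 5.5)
Your framework (integral method; Caccioppoli, Sobolev and Moser; conormal boundary terms cancelling) is the paper's. However, the two ingredients that actually carry the $\tau$-uniformity are misstated or missing.

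\textbf{The operator and the weight in Step 1.} The subsolution inequality in your Step 1 is false as written. With $a^{ij}=f_{\tau,ij}(Dw_\tau)$ you have $\partial_i(a^{ij}\partial_jW_f)=a^{ij}f_{k\ell}u_{\ell i}u_{kj}\ge0$. Hence $\partial_i(a^{ij}\partial_j\log W_f)=W_f^{-1}a^{ij}f_{k\ell}u_{\ell i}u_{kj}-a^{ij}\partial_i\log W_f\,\partial_j\log W_f$, and the gradient term has the wrong sign. Already for Euclidean $F$ and $\tau=0$ your inequality reads $|h|^2\ge(1+c)|\nabla\log W|^2$, which fails on steep, curved minimal graphs. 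The correct operator carries the weight $W_f^2$. For $\tau=0$ it is $W$ times the coordinate form of ${\rm div}_M(F^2(\nu)\nabla_F\,\cdot)$, the operator behind Lemma~\ref{Lem:V_F-PDE}, where $V_F=1/W_f$. With $\mcA^{ij}_\tau=W_f^2f_{\tau,ij}(Dw_\tau)$ and $z=1+\log W_f$, one has $D_i(\mcA^{ij}_\tau z_j)=\mcA^{ij}_\tau z_iz_j+W_ff_{\tau,ij}f_{k\ell}u_{\ell i}u_{kj}$, and $\mcA^{1j}_\tau z_j=0$ on $\Gamma_\varrho$. Moreover, the $\tau$-terms are not a harmless nonnegative perturbation. $\mcA_\tau$ exceeds the graph's natural weight $Wg^{ij}$ by the factor $1+\tau W_f$, which is unbounded for fixed $\tau$. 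So the Caccioppoli error $\int z^{\beta+1}\mcA^{ij}_\tau\eta_i\eta_j\,\rd x$ is not bounded by $\int_\Sigma z^{\beta+1}|D\eta|^2\,\rd\mcH^n$ uniformly in $\tau$. The paper divides by $1+\tau W_f$ and verifies that $D_i\bigl(\frac{\mcA^{ij}_\tau}{1+\tau W_f}z_j\bigr)\ge c(F)W|h|^2$. The curvature term on the right is something your plan never produces, yet you need it below.

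\textbf{The Sobolev inequality and the starting integrability.} No curvature-free Sobolev inequality on graphs follows from graphicality and a $W^{1,1}$ bound. Take a graph that is flat outside $B^n_\epsilon$ and carries area $\epsilon^{n-a}$, $0<a<n$, inside it, and let $\varphi\equiv1$ on $B^n_\epsilon$, cut off linearly on $B^n_{2\epsilon}$. Then $(\int_\Sigma\varphi^{\frac n{n-1}}\rd\mcH^n)^{\frac{n-1}n}\gtrsim\epsilon^{(n-a)\frac{n-1}n}$, while $\int_\Sigma(|\nabla\varphi|+\varphi)\,\rd\mcH^n\lesssim\epsilon^{n-1}+\epsilon^{n-a}$, and the ratio blows up as $\epsilon\to0$. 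You need the Michael--Simon type inequality with the extra term $r\int_\Sigma|h|^2\varphi^2$, in the free-boundary version of \cite{WWXZ26}. That version uses the contact-angle bound $\langle\mu,-e_1\rangle\ge c(F)$, which holds since $Df_\tau=(1+\tau f)Df$. Reflection does not help, because the contact angle is not $\pi/2$. This inequality is purely geometric, hence automatically $\tau$-independent, so it is not the hard step.

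The hard step is absorbing $\int|h|^2\varphi^2$ via the normalized identity above. There the gradient term carries only the factor $(1+\tau W_f)^{-2}$ and is useless uniformly in $\tau$. So the one-step estimate loses a power: $(\int_{\Sigma_{r,+}}z^{\beta\chi})^{1/\chi}\le C\frac{s}{(s-r)^2}\int_{\Sigma_{s,+}}z^{\beta+1}$. The recursion $p_{j+1}=\chi(p_j-1)$ then only runs if $p_0>n$. Consequently the oscillation must enter through the moment bound $\int_{B^{n+}_R}W_f(1+\tau W_f)z^m\,\rd x\le CR^n(1+{\rm osc}_{B^{n+}_{2R}}w_\tau/R)^{m+2}$ with $m=n+1$. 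The paper proves this by testing with $(w_\tau-\inf w_\tau)\eta^2z^m$ and absorbing $\tau W_f$ into $\tau W_f^2$. It controls the cross term by $(1+\tau W_f)|\langle Df,Dz\rangle|\le CW_f^{1/2}(\mcA^{ij}_\tau z_iz_j)^{1/2}$, via Sherman--Morrison. Your only oscillation input is the $m=0$ area bound. Even that does not hold in the linear form you state: the right-hand side contains $\frac{{\rm osc}}{R}\int(1+\tau W_f)\,\rd x$, and absorbing it gives $(1+{\rm osc}/R)^2$. So nothing in your argument produces the exponent $n+3=(n+1)+2$.
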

}
Since $w_\tau$ satisfies the anisotropic free boundary condition, to establish Theorem \ref{uniform gradient estimate}, we will follow the strategy of our recent gradient estimate in \cite{WWXZ26}. 
We note that it suffices to consider the case $x_0=0$, since the estimate for $x_0\in\Gamma_\varrho$ would follow after tangential translation along $\p\mbR^n_+$, while for $x_0\in\Om_\varrho$ with $B^n_{2R}(x_0)\subset\Om_\varrho$, the proof is identical and in fact simpler, because there is no boundary term appearing in the integral method proof.
However, we note that an essential difference to \cite{WWXZ26} is that the approximated equation is not an AMSE, therefore we cannot directly use the intrinsic PDE \eqref{eq:De_F-V_F} on the anisotropic minimal graph as in \cite{WWXZ26}, but need to seek for a similar one.

In the rest of this Section, let $u$ be a solution of \eqref{eq:c2 solution to local equation} with $\tau$ fixed.
We continue to use the following notations of standard graphical area element and anisotropic graphical area element in our previous work \cite{WWXZ26}:
\eq{
W=\sqrt{1+\abs{Du}^2},\quad
W_f=f(Du).
}
and we use for simplicity the conventions
\eq{\label{eq:Atau-coefficient}
 \mcA_\tau^{ij}
 =W_f^2 f_{\tau,ij}(Du),
 \quad
 z=1+\log W_f,\quad
 g^{ij}
 =\de_{ij}-\frac{u_iu_j}{W^2},\quad
 h_{ij}
 =\frac{u_{ij}}W.
}
As in \cite[Remark 3.1]{WWXZ26},
we could assume WLOG that $W_f\geq1$, and consequently $z\geq1$.

\begin{proposition}\label{prop:entropy}

There are $c(F),C(F)>0$ independent of $\tau$, such that for every
$\xi\in\mbR^n$,
\eq{\label{eq:Btau-bounds}
 c(F)Wg^{ij}\xi_i\xi_j
 \le\frac{\mathcal A_\tau^{ij}}{1+\tau W_f}\xi_i\xi_j
 \le C(F)W|\xi|^2.
}
In the distributional sense, we have
\eq{\label{eq:entropy-identity}
 D_i(\mathcal A_\tau^{ij}z_j)
 =\mathcal A_\tau^{ij}z_i z_j
 +W_f f_{\tau,ij}(Du)f_{k\ell}(Du)u_{\ell i}u_{kj},
}
and also
\eq{\label{eq:Btau-curvature}
 D_i\left(\frac{\mathcal A_\tau^{ij}}{1+\tau W_f}z_j\right)
 \ge c(F)W|h|^2.
}
If $\langle Df(Du),e_1\rangle=0$ in $\Gamma_\varrho$, then
\eq{\label{eq:free boundary}
 \mathcal A_\tau^{1j}z_j=0
 \quad\text{on }\Gamma_\varrho.
}
\end{proposition}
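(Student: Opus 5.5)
The plan is to prove the four assertions in order, each by a direct computation built on Lemma \ref{lem:structure of f}, Lemma \ref{lem:Gq-coercive} and the identity $D^2f_\tau=(1+\tau f)D^2f+\tau Df\otimes Df$ from \eqref{eq:D2-ftau}.

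\textbf{The bounds \eqref{eq:Btau-bounds}.} Write
\[
\frac{\mcA^{ij}_\tau}{1+\tau W_f}=W_f^2f_{ij}+\frac{\tau W_f^2}{1+\tau W_f}f_if_j .
\]
For the upper bound, use $f_{ij}\le C W^{-1}\abs{\xi}^2$ and $W_f\le CW$ for the first term. For the second term, use $\frac{\tau W_f^2}{1+\tau W_f}\le W_f\le CW$ together with \eqref{eq:Df-bound}. For the lower bound, drop the nonnegative second term and apply \eqref{eq:D2f-global} and \eqref{eq:f-growth}. This gives $W_f^2f_{ij}\xi_i\xi_j\ge cW^2\cdot W^{-1}g^{ij}\xi_i\xi_j$, since $g^{ij}\xi_i\xi_j=\abs{\xi}^2-\langle Du,\xi\rangle^2/W^2$.

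\textbf{The identity \eqref{eq:entropy-identity}.} Compute $z_j=f_k u_{kj}/W_f$, so that
\[
\mcA^{ij}_\tau z_j=W_f\,f_{\tau,ij}f_ku_{kj}.
\]
Since $Df_\tau=(1+\tau f)Df$, differentiating gives $f_{\tau,ij}u_{jk}=D_k(f_{\tau,i}(Du))$. Hence $\mcA^{ij}_\tau z_j=W_f f_k D_k(f_{\tau,i})$. Take $D_i$ and use the equation $D_i(f_{\tau,i}(Du))=0$, which holds classically for $u\in C^{2,\gamma}$ after a third-derivative regularity argument or in the distributional sense by difference quotients. Then $D_iD_k f_{\tau,i}=D_kD_if_{\tau,i}=0$, and the only surviving terms are
\[
D_i(W_f f_k)\,D_k f_{\tau,i}=\bigl(W_fz_if_k+W_ff_{k\ell}u_{\ell i}\bigr)f_{\tau,ij}u_{jk}.
\]
The first piece equals $\mcA^{ij}_\tau z_iz_j$ and the second is the stated curvature term. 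The main technical point is justifying this with only $C^{2,\gamma}$ regularity. I would do it weakly: test the equation against $D_k(\eta W_f f_k)$, or mollify.

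\textbf{The inequality \eqref{eq:Btau-curvature}.} Write
\[
D_i\Bigl(\frac{\mcA^{ij}_\tau z_j}{1+\tau W_f}\Bigr)=\frac{D_i(\mcA^{ij}_\tau z_j)}{1+\tau W_f}-\frac{\tau W_f}{(1+\tau W_f)^2}\,\mcA^{ij}_\tau z_iz_j ,
\]
using $D_iW_f=W_fz_i$. Since $\frac{\tau W_f}{1+\tau W_f}<1$, the $z$-quadratic terms combine to a nonnegative multiple of $\mcA_\tau z z$, which we discard. The remaining term is
\[
\frac{W_f}{1+\tau W_f}\,f_{\tau,ij}f_{k\ell}u_{\ell i}u_{kj}\ge W_f\,f_{ij}f_{k\ell}u_{\ell i}u_{kj},
\]
because $f_{\tau,ij}\ge(1+\tau W_f)f_{ij}$ as forms and the trace of a product of positive semidefinite forms is monotone. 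By \eqref{eq:D2f-global} both $D^2f$ factors are bounded below by $cW^{-1}g$. Then $\mathrm{tr}(g\,D^2u\,g\,D^2u)=W^2\abs{h}^2$ in the graph metric sense, where $\abs{h}^2$ is the squared norm of the second fundamental form. This yields $cW_fW^{-2}W^2\abs{h}^2\ge cW\abs{h}^2$. This matrix-monotonicity step is the one that needs care. I would diagonalize $g$ and use $\mathrm{tr}(ASBS)\ge\mathrm{tr}(A'SB'S)$ for $A\ge A'\ge0$ and $B\ge B'\ge0$.

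\textbf{The boundary identity \eqref{eq:free boundary}.} From the computation above, $\mcA^{1j}_\tau z_j=W_ff_kD_k(f_{\tau,1}(Du))$. On $\Gamma_\varrho$ we have $f_{\tau,1}(Du)=(1+\tau f)f_1=0$, so all tangential derivatives vanish, that is, $D_k f_{\tau,1}=0$ for $k\ge2$. The remaining term is $f_1D_1(f_{\tau,1})$, and it vanishes because $f_1=0$ on $\Gamma_\varrho$.
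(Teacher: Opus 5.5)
Your proposal is correct and follows essentially the same route as the paper. You use the same splitting $\mathcal A^{ij}_\tau/(1+\tau W_f)=W_f^2f_{ij}+\frac{\tau W_f^2}{1+\tau W_f}f_if_j$, the same product-rule computation (your commutation $D_iD_kf_{\tau,i}(Du)=D_kD_if_{\tau,i}(Du)=0$ is the paper's differentiated equation $D_i(f_{\tau,ij}u_{jk})=0$), the same quotient-rule step leaving $(1+\tau W_f)^{-2}\mathcal A^{ij}_\tau z_iz_j\ge0$ together with $D^2f_\tau\ge(1+\tau W_f)D^2f$, and the same tangential differentiation of $f_{\tau,1}(Du)=0$ on $\Gamma_\varrho$; your trace-monotonicity remark $\mathrm{tr}(ASBS)\ge\mathrm{tr}(A'SB'S)$ only spells out what the paper uses implicitly when it invokes \eqref{eq:D2f-global}.
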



\begin{proof}
By definition,
\eq{\label{eq:A^ij_tau-1+tau-W_f}
 \frac{\mathcal A_\tau^{ij}}{1+\tau W_f}\xi_i\xi_j
 =W_f^2f_{ij}(Du)\xi_i\xi_j
 +\frac{\tau W_f^2}{1+\tau W_f}f_i(Du)f_j(Du)\xi_i\xi_j.
}
The first term is bounded from below by $c(F)Wg^{ij}$ thanks to \eqref{eq:D2f-global} and $W_f\geq c(F)W$, while the second term is non-negative. For the upper bound in \eqref{eq:Btau-bounds}, note that both terms on the RHS of \eqref{eq:A^ij_tau-1+tau-W_f} are bounded above by $C(F)W\abs{\xi}^2$, thanks to $|Df|\le C(F)$ and
\eq{
 \frac{\tau W_f^2}{1+\tau W_f}\le W_f\le C(F)W,
}
which proves \eqref{eq:Btau-bounds}.

Since $D_i(W_f)=f_k(Du)u_{ki}$ and $z_i=\frac{D_i(W_f)}{W_f}$, differentiating ${\rm div}(Df_\tau(Du))=0$ with respect to $x_k$ gives, in the distributional sense,
\eq{
D_i\left(f_{\tau,ij}(Du)u_{jk}\right)
=0.
}
Hence it is direct to compute
\eq{
D_i(\mcA^{ij}_\tau z_j)
=D_i(W_ff_{\tau,ij}f_ku_{kj})
=&D_i(W_f)f_{\tau,ij}D_j(W_f)
  +W_ff_{\tau,ij}f_{k\ell}u_{\ell i}u_{kj}
  +W_ff_k\underbrace{D_i(f_{\tau,ij}u_{kj})}_{=0},
}
the first term on the right can be rewritten as $\mcA^{ij}_\tau z_iz_j$, which proves \eqref{eq:entropy-identity}.

By $D_iW_f=W_fz_i$ and a direct computation using \eqref{eq:entropy-identity}, we get
\eq{\label{eq:Btau-exact}
 D_i\left(\frac{\mathcal A_\tau^{ij}}{1+\tau W_f}z_j\right)
 &=\underbrace{\frac{\mathcal A_\tau^{ij}z_i z_j}{(1+\tau W_f)^2}}_{\geq0}+
\frac{W_f}{1+\tau W_f}
 f_{\tau,ij}(Du)f_{k\ell}(Du)u_{\ell i}u_{kj}.
}
Since $f_{\tau,ij}=(1+\tau f)f_{ij}+\tau f_i f_j$ and $D^2f$ is positive definite, we get
\eq{
 f_{\tau,ij}f_{k\ell}u_{\ell i}u_{kj}
 \geq(1+\tau W_f)f_{ij}f_{k\ell}u_{\ell i}u_{kj}+0
 \geq c(F)(1+\tau W_f)|h|^2,
}
where the last inequality follows from \eqref{eq:D2f-global} and
\eq{\label{eq:h-graph}
 |h|^2=g^{ik}g^{j\ell}h_{ij}h_{k\ell}
 =\frac1{W^2}g^{ik}g^{j\ell}u_{ij}u_{k\ell}.
}
Back to \eqref{eq:Btau-exact}, we thus deduce \eqref{eq:Btau-curvature}.

Finally, on $\Gamma_\varrho$ we have $f_1(Du)=0$, and hence $ \langle Df_{\tau}(Du), e_1\rangle=0$.
Taking tangential differentiation, we get
\eq{
f_{\tau,1j}(Du)u_{j\alpha}=0,\quad\text{for any }\alpha=2,\ldots,n.
}
Hence \eqref{eq:free boundary} follows from the fact that
\eq{
\mcA_\tau^{1j}z_j
=W_ff_{\tau,1j}(Du)f_k(Du)u_{kj}
=W_f\underbrace{f_1}_{=0}f_{\tau,1j}u_{1j}+W_f\sum_{\alpha=2}^nf_\alpha\underbrace{f_{\tau,1j}u_{j\alpha}}_{=0}
=0.
}
This completes the proof.
\end{proof}

\subsection{Integral estimates}
Following \cite[Section 3]{WWXZ26} but with a slight difference, we show:

\begin{lemma}\label{prop:upper bound of log integral}
For $R\leq\frac{1}2\varrho$ and any integer $m\geq0$, one has
\eq{\label{eq:moment-final}
 \int_{B^{n+}_{R}}W_{f,\tau}z^m\rd x
 \leq C(m,n,F)R^n\left(1+\frac{{\rm osc}_{B^{n+}_{2R}}u}R\right)^{m+2},
}
where $W_{f,\tau}\coloneqq W_f(1+\tau W_f)$, and $C(m,n,F)>0$ is independent of $\tau$.  
\end{lemma}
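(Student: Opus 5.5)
We prove \eqref{eq:moment-final} by induction on $m$, testing the weak form \eqref{eq:weak-ftau} and the identity \eqref{eq:entropy-identity} against suitable cutoffs. The free boundary conditions $\langle Df_\tau(Du),e_1\rangle=0$ and \eqref{eq:free boundary} make all boundary terms on $\Gamma_{2R}$ vanish. Throughout, let $\eta\in C^1_c(B^n_{2R})$ be a cutoff with $\eta\equiv1$ on $B^n_R$ and $|D\eta|\le C/R$; it need not vanish on $\p\mbR^n_+$. Write $M\coloneqq{\rm osc}_{B^{n+}_{2R}}u$ and $\tilde u=u-\inf_{B^{n+}_{2R}}u\in[0,M]$.

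\emph{Base case $m=0$.} Test \eqref{eq:weak-ftau} with $\zeta=\tilde u\,\eta^{2}$; this is admissible because the boundary integral on $\Gamma$ vanishes by the conormal condition. Since $Df_\tau=(1+\tau f)Df$ and $\langle Df(p),p\rangle=f(p)-F(0,\dots,0,1)$ by \eqref{eq:<DF(z),z>=F(z)}, we get
\[
\int (1+\tau W_f)\bigl(W_f-C\bigr)\eta^2\le \int (1+\tau W_f)|Df|\,2\tilde u\,\eta|D\eta|\le \frac{CM}{R}\int_{B^{n+}_{2R}}(1+\tau W_f)\eta.
\]
The term $\tau W_f\eta$ on the right is absorbed by Young's inequality into $\frac12\tau W_f^2\eta^2$ plus $C\tau(M/R)^2\eta$. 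The remaining terms are bounded by $CR^n(1+M/R)^2$, which gives the case $m=0$ on $B^{n+}_R$. To make the induction close, we prove the estimate on slightly larger intermediate balls, e.g.\ $B^{n+}_{(1+2^{-m})R}$, and rescale at the end.

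\emph{Inductive step.} Test the divergence identity for $z$ with $\psi=z^{m}\tilde u\,\eta^2$. Written in divergence form, \eqref{eq:entropy-identity} reads $D_i(\mcA^{ij}_\tau z_j)\ge \mcA^{ij}_\tau z_iz_j$. Alternatively, one can test the Euler–Lagrange equation with $\zeta=z^m\tilde u\eta^2$ directly, as in \cite[Section 3]{WWXZ26}. By \eqref{eq:weak-ftau}, this produces
\[
\int (1+\tau W_f)\langle Df,Du\rangle z^m\eta^2
=-\int (1+\tau W_f)\tilde u\langle Df, D(z^m\eta^2)\rangle .
\]
The left side dominates $\int W_{f,\tau}z^m\eta^2$ minus lower-order terms. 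On the right, the $D\eta$ term is handled as in the base case via the induction hypothesis for $z^{m}$ on the larger ball; since $z\ge1$, we may use $z^{m}\le z^{m}$ times a factor that costs one power of $(1+M/R)$. The term $m\tilde u(1+\tau W_f)z^{m-1}\langle Df,Dz\rangle\eta^2$ is the crux. Using $\langle Df,Dz\rangle=f_if_ku_{ki}/W_f$ together with the ellipticity \eqref{eq:Btau-bounds}, we bound it by Cauchy–Schwarz as
\[
M\Bigl(\int \frac{\mcA^{ij}_\tau z_iz_j}{W_f(1+\tau W_f)}\cdot (\cdots)\Bigr)^{1/2}\Bigl(\int W_{f,\tau}z^{2m-2}\eta^2\Bigr)^{1/2}.
\]
The energy $\int \mcA^{ij}_\tau z_iz_j z^{m-2}\eta^2/W_f$-type quantity is then controlled by testing \eqref{eq:entropy-identity} with $z^{m-1}\eta^2/W_f$-type weights. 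Equivalently, one can use $\log$ integrals exactly as in \cite[Lemma 3.3]{WWXZ26}, whose computation transfers because $\mcA^{ij}_\tau$ satisfies the $\tau$-uniform bounds \eqref{eq:Btau-bounds} after division by $1+\tau W_f$, which is precisely the weight in $W_{f,\tau}$. Each step gains at most one factor $(1+M/R)$, yielding the exponent $m+2$.

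The main obstacle is this gradient-of-$z$ term: the weights must be chosen so that the anisotropic/ $\tau$ factor $1+\tau W_f$ appears consistently. In particular, the quadratic form $\mcA_\tau^{ij}/(1+\tau W_f)$ must be comparable to $W_f g^{ij}$, and $\langle Df,\xi\rangle^2\lesssim W^{-1}\cdot Wg^{ij}\xi_i\xi_j$-type bounds (cf.\ \eqref{eq:Gq-coercive}) must hold, so that nothing depends on $\tau$. I would first try to follow \cite{WWXZ26} verbatim, replacing $W_f$ by $W_{f,\tau}$ wherever the measure appears, and check that every boundary term on $\Gamma$ vanishes by $f_1(Du)=0$ and \eqref{eq:free boundary}.
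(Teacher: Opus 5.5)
\paragraph{The gap is in the crux term.}
Your architecture matches the paper's: test \eqref{eq:weak-ftau} with $\tilde u z^m\eta^2$, use $f_1(Du)=0$ and \eqref{eq:free boundary} to kill the terms on $\Gamma$, and induct on $m$. Your base case is fine; only note that $\langle Df(p),p\rangle=f(p)-\bar D_{n+1}F(-p,1)$, not $f(p)-F(e_{n+1})$, though only boundedness of the correction matters.

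The problem is the term $m\tilde u z^{m-1}(1+\tau W_f)\langle Df(Du),Dz\rangle\eta^2$. You call it the crux but do not resolve it, and the version you write down cannot close.
\begin{itemize}
\item Your Cauchy--Schwarz puts $z^{2m-2}$ into the factor $\int W_{f,\tau}z^{2m-2}\eta^2$. For $m\ge3$ this is a higher moment than the $m$-th one being estimated, so the induction cannot close.
\item Suppose you instead pair with the $1/W_f$-weighted energy $\int z^{m-2}\eta^2\mathcal A^{ij}_\tau z_iz_j/W_f$. Testing with $z^{m-1}\eta^2/W_f$ does control this quantity, but the complementary factor is then $\int z^mW_fW_{f,\tau}\eta^2$, which nothing bounds.
\item The pointwise bound $\langle Df,\xi\rangle^2\lesssim g^{ij}\xi_i\xi_j$ you invoke is false in the $Du$-direction. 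There $g^{ij}\xi_i\xi_j=|\xi|^2/W^2$, while $\langle Df(Du),Du/|Du|\rangle$ stays bounded below for large $|Du|$, since $\langle Df(p),p\rangle\ge c(F)|p|-C(F)$.
\item \eqref{eq:Gq-coercive} is a lower bound and does not help here.
\end{itemize}

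\paragraph{What is missing, and how the induction then closes.}
You need the paper's Step 1 together with the correct pointwise bound. Take $\eta\equiv1$ on $B^{n+}_r$, supported in $B^n_s$, with $|D\eta|\le 2/(s-r)$, and set $\mathscr E_k(t)=\int_{B^{n+}_t}W_{f,\tau}z^k\,\rd x$.

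First, test \eqref{eq:entropy-identity} with $\eta^2z^{m-1}$ (no $1/W_f$). Discard the nonnegative terms $W_ff_{\tau,ij}f_{k\ell}u_{\ell i}u_{kj}=W_f\|(D^2f_\tau)^{1/2}D^2u\,(D^2f)^{1/2}\|^2$ and $(m-1)\eta^2z^{m-2}\mathcal A_\tau^{ij}z_iz_j$, and use \eqref{eq:free boundary} on $\Gamma$. With the upper bound in \eqref{eq:Btau-bounds} this gives
\[
\int\eta^2z^{m-1}\mathcal A_\tau^{ij}z_iz_j\,\rd x\le 4\int z^{m-1}\mathcal A_\tau^{ij}\eta_i\eta_j\,\rd x\le\frac{C}{(s-r)^2}\,\mathscr E_{m-1}(s).
\]

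Second, the pointwise bound. With $g_{ij}=\delta_{ij}+u_iu_j$ one has
\[
\langle Df,\xi\rangle^2\le(g_{ij}f_if_j)\,g^{k\ell}\xi_k\xi_\ell\le CW^2g^{k\ell}\xi_k\xi_\ell .
\]
The lower bound in \eqref{eq:Btau-bounds} then gives $(1+\tau W_f)|\langle Df,Dz\rangle|\le C\,W_{f,\tau}^{1/2}(\mathcal A_\tau^{ij}z_iz_j)^{1/2}$, uniformly in $\tau$. The paper's Sherman--Morrison computation even yields $CW_f^{1/2}$, which is more than needed.

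Third, split $z^{m-1}$ evenly between the two Cauchy--Schwarz factors. This bounds the crux term by $C\frac{M}{s-r}\mathscr E_{m-1}(s)$.

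Finally, under the normalization $W_f\ge1$ you have $z\le W_f$, hence $(1+\tau W_f)z^m\le W_{f,\tau}z^{m-1}$. This inequality, which is what your garbled ``$z^m\le z^m$'' step should say, disposes of both the $D\eta$ term and the lower-order term on the left. Altogether you get
\[
\mathscr E_m(r)\le C\Bigl(1+\frac{M}{s-r}\Bigr)\mathscr E_{m-1}(s),
\]
so your shrinking-radii induction closes directly. It does not even need the $\frac12\mathscr E_m(s)$ absorption and iteration that the paper uses.
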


\begin{proof}
For $r<s<2R$, let $0\leq\eta\leq1$ be a standard cutoff function satisfying
\eq{
\eta
\equiv1\text{ on }B^{n+}_{r},\quad
{\rm spt}\eta\subset B^n_s,\quad
\abs{D\eta}
\leq\frac{2}{s-r}.
}
For each $k\geq0$, we define
\eq{
\mathscr E_k(t)
\coloneqq\int_{B^{n+}_t}W_{f,\tau}z^k\rd x.
}

\noindent{\bf Step 1. We prove that for any $m\geq1$, 
\eq{\label{standard formula 1}
\int_{B^{n+}_{s}}\eta^2z^{m-1}
\mcA_\tau^{ij}z_i z_j\rd x
\leq\frac{C(F)}{(s-r)^2}
\mathscr E_{m-1}(s).
}
}

Note that
\eq{
f_{\tau,ij}f_{k\ell}u_{\ell i}u_{kj}
={\rm tr}\left(D^2f_\tau D^2u D^2f D^2u\right)
=\norm{(D^2f_\tau)^{1/2}D^2u(D^2f)^{1/2}}^2
\geq0.
}
Testing \eqref{eq:entropy-identity} with $\eta^2z^{m-1}$ over $B^{n+}_{s}$, using integration by parts and taking into account the boundary relation \eqref{eq:free boundary} yields
\eq{
 &\int_{B^{n+}_{s}}\eta^2z^{m-1}
   \mathcal A_\tau^{ij}z_i z_j\rd x
 +(m-1)\int_{B^{n+}_{s}}\eta^2z^{m-2}
   \underbrace{\mathcal A_\tau^{ij}z_i z_j}_{\geq0}\rd x\\
&\leq -2\int_{B^{n+}_{s}}\eta z^{m-1}
   \mathcal A_\tau^{ij}\eta_i z_j\rd x\\
&\leq \int_{B^{n+}_{s}}\frac12\eta^2z^{m-1}
       \mathcal A_\tau^{ij}z_i z_j\rd x
 +2\int_{B^{n+}_{s}}z^{m-1}\mathcal A_\tau^{ij}\eta_i\eta_j\rd x.
}
Consequently,
\eq{
 \int_{B^{n+}_{s}}\eta^2z^{m-1}
 \mathcal A_\tau^{ij}z_i z_j\rd x
 &\leq4\int_{B^{n+}_{s}}z^{m-1}
 \mathcal A_\tau^{ij}\eta_i\eta_j\rd x\\
 &\overset{\eqref{eq:Btau-bounds}}{\leq}
 C(F)\int_{B^{n+}_{s}}
 W_{f,\tau}z^{m-1}|D\eta|^2\rd x\\
 &\leq\frac{C(F)}{(s-r)^2}
 \int_{B^{n+}_{s}}
 W_{f,\tau}z^{m-1}\rd x
 =\frac{C(F)}{(s-r)^2}\mathscr E_{m-1}(s),
}
which completes {\bf Step 1}.

\smallskip
\noindent{\bf Step 2. For $m\in \mathbb{N}$, we have
\eq{\label{eq:iteration}
 \mathscr E_m(r)
 &\le\frac12\mathscr E_m(s)
 +C(m,n,F)R^n
 \left(1+\frac{{\rm osc}_{B^{n+}_{2R}}u}{s-r}\right)^{m+2}+C(m,n,F)\frac{{\rm osc}_{B^{n+}_{2R}}u}{s-r}\mathscr E_{m-1}(s).
}
}

We first assume $m\geq1$. Replacing $u$ by $u-\inf_{B_{2R,+}}u$, we have $0\leq u\leq{\rm osc}_{B^{n+}_{2R}}u$.
Multiplying \eqref{eq:c2 solution to local equation} with  $u\eta^2z^m$,
\eq{\label{eq:test-w}
 &\int_{B^{n+}_{s}}\eta^2z^m(1+\tau W_f)
 \langle Df(Du),Du\rangle\rd x\\
 &\quad=-2\int_{B^{n+}_{s}}u\eta z^m(1+\tau W_f)
 \langle Df(Du),D\eta\rangle\rd x\\
 &\qquad-m\int_{B^{n+}_{s}}u\eta^2z^{m-1}(1+\tau W_f)
 \langle Df(Du),Dz\rangle\rd x.
}

Since $\left<Df(p),p\right>=f(p)-\bar D_{n+1}F(-p,1)$ and $\bar D_{n+1}F(-p,1)\leq C(F)$, it is easy to see
\eq{\label{eq:coercive-term}
 (1+\tau W_f)\left<Df(Du),Du\right>z^m
 \geq c(F)W_{f,\tau}z^m-C(m,F).
}
By direct computation, we also find that for any fixed $\delta>0$, if $W_f\geq\frac{C(F)}\de\frac{{\rm osc}_{B^{n+}_{2R}}u}{s-r}$ then
\eq{
C(F)\frac{{\rm osc}_{B^{n+}_{2R}}u}{s-r}(1+\tau W_f)z^m
\leq\de W_{f,\tau}z^m.
}
In the other case, we have $W_f\leq C(F,\de)\left(1+\frac{{\rm osc}_{B^{n+}_{2R}}u}{s-r}\right)$, and since $z=1+\log W_f\leq W_f$ (thanks to $W_f\geq1$), we find using $0<\tau\leq1$ that
\eq{
\frac{{\rm osc}_{B^{n+}_{2R}}u}{s-r}(1+\tau W_f)z^m
\leq C(m,F,\de)\left(1+\frac{{\rm osc}_{B^{n+}_{2R}}u}{s-r}\right)^{m+2}.
}
From these two cases we get for any fixed $\de>0$,
\eq{\label{eq:pointwise bound}
 C(F)\frac{{\rm osc}_{B^{n+}_{2R}}u}{s-r}(1+\tau W_f)z^m
 \le \delta W_{f,\tau}z^m
 +C(m,F,\delta)\left(1+\frac{{\rm osc}_{B^{n+}_{2R}}u}{s-r}\right)^{m+2}.
}
Combining, we obtain
\eq{\label{eq:first-error}
 &\Abs{2\int_{B^{n+}_{s}}u\eta z^m(1+\tau W_f)
 \langle Df(Du),D\eta\rangle\rd x}\\
 &\le C(F)\frac{{\rm osc}_{B^{n+}_{2R}}u}{s-r}
 \int_{B_{^{n+}s}}(1+\tau W_f)z^m\rd x\\
 &\overset{\eqref{eq:pointwise bound}}{\le}
 \de\int_{B^{n+}_{s}}W_{f,\tau}z^m\rd x
 +C(m,n,F,\delta)|B^{n+}_{s}|
 \left(1+\frac{{\rm osc}_{B^{n+}_{2R}}u}{s-r}\right)^{m+2}\\
 &\le\de\mathscr E_m(s)
 +C(m,n,F,\delta)R^n
 \left(1+\frac{{\rm osc}_{B^{n+}_{2R}}u}{s-r}\right)^{m+2}.
}
where we have used $0\le u\leq{\rm osc}_{B^{n+}_{2R}}u$, $0\le\eta\le1$, $|Df|\le C(F)$ and 
$|D\eta|\le2/(s-r)$.

On the other hand, by \eqref{eq:D2f-global} and $\abs{Df}\leq C(F)$, one has
\eq{
\mathscr F\coloneqq Df(Du)^T(D^2f(Du))^{-1}Df(Du)
\leq C(F)W^3
\leq C(F)W_f^3,
}
and hence thanks to $W_f\geq1$, $0<\tau\leq1$,
\eq{\label{ineq:mathscr-F-tau}
(1+\tau W_f)^2\mathscr F
=\mathscr F+2\tau W_f\mathscr F+\tau^2W_f^2\mathscr F
\leq C(F)W_f^3\left(1+\tau W_f+\tau\mathscr F\right).
}
By the Sherman-Morrison formula, we thus find (noting that $(1+\tau W_f)^2\leq2+2\tau^2W_f^2$)
\eq{\label{ineq:quadratic-form-mcA_tau-inver}
&((1+\tau W_f)Df)^T\mathcal A_\tau^{-1}
((1+\tau W_f)Df)
=\frac{(1+\tau W_f)^2Df^T(D^2f)^{-1}Df}
{W_f^2\left(1+\tau W_f+\tau Df^T(D^2f)^{-1}Df\right)}\\
\leq&\frac{2}{W_f^2}Df^T(D^2f)^{-1}Df+2\tau
\overset{\eqref{ineq:mathscr-F-tau}}{\leq} C(F)W_f.
}
We can now use Cauchy's inequality to get
\eq{
 &(1+\tau W_f)|\langle Df(Du),Dz\rangle|\\
 &\le
 \left(\mathcal A_\tau^{ij}z_i z_j\right)^{1/2}
 \left(
 ((1+\tau W_f)Df(Du))^T\mathcal A_\tau^{-1}
 ((1+\tau W_f)Df(Du))
 \right)^{1/2}\\*
 &\le C(F)W_f^{1/2}
 \left(\mathcal A_\tau^{ij}z_i z_j\right)^{1/2}.
}
For $m\ge1$ this gives
\eq{\label{eq:second-error}
 &m{\rm osc}_{B^{n+}_{2R}}u\int_{B^{n+}_{s}}\eta^2z^{m-1}(1+\tau W_f)
 |\langle Df(Du),Dz\rangle|\rd x\\
 \leq&C(m,F){\rm osc}_{B^{n+}_{2R}}u
 \left(\int_{B^{n+}_{s}}\eta^2z^{m-1}
 \mathcal A_\tau^{ij}z_i z_j\rd x\right)^{1/2}
 \left(\int_{B^{n+}_{s}}\eta^2z^{m-1}W_f\rd x\right)^{1/2}
 \\
 \overset{\eqref{standard formula 1}}{\le}&
 \frac{C(m,F){\rm osc}_{B^{n+}_{2R}}u}{s-r}\mathscr E_{m-1}(s)^{1/2}
 \left(\int_{B^{n+}_{s}}W_{f,\tau}z^{m-1}\rd x\right)^{1/2}\\
 =&C(m,F)\frac{{\rm osc}_{B^{n+}_{2R}}u}{s-r}\mathscr E_{m-1}(s).
}
Recalling that $\eta=1$ on $B^{n+}_{r}$, by \eqref{eq:coercive-term} we find
\eq{
 c(F)\mathscr E_m(r)
 &\le c(F)\int_{B^{n+}_{s}}\eta^2W_{f,\tau}z^m\rd x\\
 &\le\int_{B^{n+}_{s}}\eta^2z^m(1+\tau W_f)
 \langle Df(Du),Du\rangle\rd x+C(m,n,F)R^n.
}
Combining this with \eqref{eq:test-w}, \eqref{eq:first-error}, \eqref{eq:second-error}, then letting $\delta=c(F)/2$, we obtain \eqref{eq:iteration}.
For the case $m=0$, the $Dz$ term is zero, and the same calculation gives
\eq{\label{eq:radius-rec0}
 \mathscr E_0(r)
 \le\frac12\mathscr E_0(s)
 +C(F)R^n\left(1+\frac{{\rm osc}_{B^{n+}_{2R}}u}{s-r}\right)^2.
}
This finishes {\bf Step 2}.
Finally, by a standard iteration argument using \eqref{eq:iteration}, we deduce
\eq{
\mathscr E_m(r)
\leq C(m,n,F)R^n\left(1+\frac{{\rm osc}_{B^{n+}_{2R}}u}{\frac{3}2R-r}\right)^{m+2},\quad\forall r\in[R,\frac{3R}2).
}
Letting $r=R$ then yields the required estimate \eqref{eq:moment-final}.

\end{proof}

Here we point out that, an $L^1$-estimate ($m=1$) is sufficient for the gradient estimate in \cite{WWXZ26}, but here we need the estimate for large $m$ as the initial step for our iteration argument below.

\subsection{Moser-type iteration}

Now we use the strategy of \cite[Section 4]{WWXZ26}, and we divide the proof into several Lemmatum.

\begin{lemma}\label{lem:sobolev}
Let $\Sigma={\rm graph}(u)$. For any non-negative Lipschitz $\varphi$ vanishing on ${\rm graph}\left(u\mid_{S_\varrho}\right)$ and any $r>0$, one has
\eq{\label{eq:sobolev}
 \left(\int_\Sigma\varphi^{\frac{2n}{n-1}}\rd\mcH^n\right)^{\frac{n-1}{n}}
 \le C(n,F)\left[
 \frac1r\int_\Sigma\varphi^2\rd\mcH^n
 +r\int_\S\left(|\nabla\varphi|^2+|h|^2\varphi^2\right)\rd\mcH^n\right].
}
\end{lemma}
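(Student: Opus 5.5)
This is a Michael--Simon type Sobolev inequality on the graph $\Sigma$, which has free boundary on $\p\mbR^{n+1}_+$. I would derive it by reflection across $\p\mbR^{n+1}_+$, or equivalently by inserting a free boundary version of the Michael--Simon inequality, and then absorbing the mean curvature term. The reason is that for the approximating equation, $\Sigma$ is not minimal (its mean curvature $H$ is not zero). Still, $|H|\le\sqrt n\,|h|$, so the mean curvature contribution can be bounded by the $|h|^2$ term. The order of steps is as follows.

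\emph{Step 1: a free boundary Michael--Simon inequality.} Since $\varphi$ vanishes on the Dirichlet part ${\rm graph}(u|_{S_\varrho})$, the only boundary of $\S$ on which $\varphi$ may be nonzero is the free boundary $\p\S\subset\p\mbR^{n+1}_+$. Here I would invoke the Michael--Simon inequality for hypersurfaces with boundary meeting a hyperplane, in the form
\[
\Bigl(\int_\S\psi^{\frac{n}{n-1}}\Bigr)^{\frac{n-1}{n}}\le C(n)\Bigl(\int_\S\bigl(|\na\psi|+|H|\psi\bigr)+\int_{\p\S}\psi\Bigr).
\]
The boundary integral is the new term. To handle it I would use the divergence theorem on $\S$ with the tangential vector field $e_1^T$: ${\rm div}_\S e_1^T=H\langle\nu,e_1\rangle$, and on $\p\S$ one has $\langle e_1^T,\mu\rangle=\langle e_1,\mu\rangle$. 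By \eqref{eq:mu-transverse}, $\langle\mu,-e_1\rangle\ge c(F)>0$, which gives
\[
c(F)\int_{\p\S}\psi\le\int_\S\bigl(|\na\psi|+|H|\psi\bigr).
\]
This is exactly where the anisotropic free boundary condition, through the uniform transversality of Lemma \ref{lem:normal controlled by tangential}, enters. The outcome is the boundaryless form of the inequality with constant $C(n,F)$.

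\emph{Step 2: apply it to $\psi=\varphi^{\frac{2(n-1)}{n-2}}$, or more simply use $\psi=\varphi^2$ combined with H\"older.} Taking $\psi=\varphi^{2}$ and the exponent $\frac{n}{n-1}$ gives the $L^{\frac{2n}{n-1}}$ norm of $\varphi$ on the left, exactly as in \eqref{eq:sobolev}. On the right, bound $|\na\varphi^2|\le 2\varphi|\na\varphi|\le r|\na\varphi|^2+r^{-1}\varphi^2$ and $|H|\varphi^2\le\sqrt n|h|\varphi^2\le r|h|^2\varphi^2+r^{-1}\varphi^2$. Note that the left side is squared-homogeneous, with power $2$ in $\varphi$, consistent with the right side, so no further interpolation is needed. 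Lipschitz $\varphi$ is handled by approximation.

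The main obstacle is Step 1. One must make sure the boundary term is controlled with a constant independent of $\tau$, which follows from \eqref{eq:mu-transverse} since $w_\tau$ satisfies $f_1(Du)=0$. One must also check that $e_1^T$ is a legitimate test field, given that $\varphi$ need not vanish on the corner $E_\varrho$; this causes no problem because $\varphi$ vanishes on the Dirichlet part anyway.
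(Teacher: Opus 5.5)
Your proof is correct and follows essentially the same route as the paper, which defers to the proof of \cite[Proposition 4.3]{WWXZ26}. That route is the Michael--Simon inequality with boundary term, with the boundary term absorbed via ${\rm div}_\Sigma(\psi e_1^T)$ and the uniform transversality \eqref{eq:mu-transverse} (valid for $w_\tau$ since $f_1(Dw_\tau)=0$ on $\Gamma_\varrho$), and then $\psi=\varphi^2$ with Young's inequality and $|H|\le\sqrt n|h|$; the only change from that earlier proof is, exactly as you do, keeping $\int_\Sigma|h|^2\varphi^2$ rather than removing it by the stability inequality, which is unavailable because the graph of $w_\tau$ is not anisotropic minimal.
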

\begin{proof}
The proof is already contained in that of \cite[Proposition 4.3]{WWXZ26}, the only difference is that in proving \eqref{eq:sobolev}, one does not (and cannot) use the stability inequality \cite[(2.12)]{WWXZ26} since the graph is not anisotropic minimal.
\end{proof}

\begin{lemma}\label{lem:iteration}
For every $\beta\ge1$ and $0<r<s<\varrho$, put for simplicity $\chi=\frac n{n-1}$, and $\S_{t,+}\coloneqq{\rm graph}(u\mid_{B^{n+}_t})$ for any $0<t<\varrho$, then
\eq{\label{eq:one-step}
 \left(\int_{\S_{r,+}}z^{\beta\chi}\rd\mcH^n\right)^{1/\chi}
 \leq C(n,F)\frac{s}{(s-r)^2}
 \int_{\S_{s,+}}z^{\beta+1}\rd\mcH^n.
}

\end{lemma}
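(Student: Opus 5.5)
The plan is to apply the Sobolev inequality of Lemma \ref{lem:sobolev} with $\varphi=\eta z^{\beta/2}$, where $\eta=\eta(x)$ is a cutoff with $\eta\equiv1$ on $B^n_r$, $\spt\eta\subset B^n_s$ and $\abs{D\eta}\le 2/(s-r)$. Such a $\varphi$ vanishes on ${\rm graph}(u|_{S_\varrho})$ but need not vanish on the free boundary part over $\Gamma_\varrho$. We take the free parameter in \eqref{eq:sobolev} to be $s$. The left side then dominates $\left(\int_{\S_{r,+}}z^{\beta\chi}\right)^{1/\chi}$. Since $z\ge1$, the term $\frac1s\int\varphi^2$ is at most $\frac{1}{s}\int_{\S_{s,+}}z^{\beta+1}$, and $\frac1s\le\frac{s}{(s-r)^2}$. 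It therefore remains to bound
\[
s\int_\S\left(\abs{\na\varphi}^2+\abs{h}^2\varphi^2\right)\rd\mcH^n
\]
by $C\frac{s}{(s-r)^2}\int_{\S_{s,+}}z^{\beta+1}$.

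Expanding the gradient term gives $\abs{\na\varphi}^2\le 2z^\beta\abs{\na\eta}^2+\frac{\beta^2}{2}\eta^2z^{\beta-2}\abs{\na z}^2$. The first piece is fine because $\abs{\na\eta}\le\abs{D\eta}$. Both the $\abs{\na z}^2$ piece and the curvature piece are handled with Proposition \ref{prop:entropy}. Converting to integrals over $B^{n+}_s$ with $\rd\mcH^n=W\rd x$, we have
\[
W\abs{\na z}^2=Wg^{ij}z_iz_j\le C\frac{\mcA^{ij}_\tau z_iz_j}{1+\tau W_f}
\]
by \eqref{eq:Btau-bounds}. For the curvature term we test \eqref{eq:Btau-curvature} with $\eta^2 z^{\beta}$, writing $\mathcal B^{ij}=\mcA^{ij}_\tau/(1+\tau W_f)$. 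The boundary integral over $\Gamma_s$ vanishes by \eqref{eq:free boundary}, since $\mathcal B^{1j}z_j=0$ there. This yields
\[
c\int\eta^2z^\beta W\abs{h}^2\rd x+\beta\int\eta^2z^{\beta-1}\mathcal B^{ij}z_iz_j\rd x\le 2\int\eta z^\beta\abs{\mathcal B^{ij}\eta_iz_j}\rd x.
\]
Cauchy--Schwarz on the right gives $\frac\beta2\int\eta^2z^{\beta-1}\mathcal B z\cdot z+\frac{C}{\beta}\int z^{\beta+1}\mathcal B\eta\cdot\eta$. The upper bound in \eqref{eq:Btau-bounds} then gives $\mathcal B\eta\cdot\eta\le CW\abs{D\eta}^2$. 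The result is
\[
\int\eta^2 z^\beta W\abs{h}^2+\beta\int\eta^2z^{\beta-1}\mathcal B z\cdot z\le\frac{C}{(s-r)^2}\int_{\S_{s,+}}z^{\beta+1}.
\]

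The $\abs{\na z}^2$ term comes with weight $\beta^2\eta^2z^{\beta-2}$, whereas the estimate above controls only $\beta\eta^2z^{\beta-1}\mathcal Bz\cdot z$. Because $z\ge1$, we have $\beta^2z^{\beta-2}\le\beta\cdot\beta z^{\beta-1}$, which leaves an extra factor of $\beta$. This $\beta$-dependence is the main obstacle, since the stated constant $C(n,F)$ is independent of $\beta$. To remove it, I would instead test \eqref{eq:Btau-curvature} with $\eta^2z^{\beta+1}$, which produces the term $(\beta+1)\int\eta^2z^\beta\mathcal Bz\cdot z$. Comparing, $\beta^2z^{\beta-2}\le(\beta+1)z^\beta\cdot\beta z^{-2}$, so this still loses a factor of $\beta$ relative to the needed weight. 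If no choice of test function removes it, I would accept a constant $C(n,F)\beta$ (or $\beta^2$) on the right, which is harmless in a Moser iteration. I would then check whether the intended statement absorbs this, for instance because $\beta$ is eventually fixed or through the $\frac{\beta^2}{4}$ scaling with $\varphi=\eta z^{\beta/2}$ combined with the $\frac{1}{\beta}$ gain from Cauchy--Schwarz. Collecting terms and multiplying by $s$ gives \eqref{eq:one-step}.
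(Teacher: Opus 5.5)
Your setup is the same as the paper's: $\varphi=\eta z^{\beta/2}$ in \eqref{eq:sobolev} with radius $s$, testing \eqref{eq:Btau-curvature} against $\eta^2z^\beta$, killing the $\Gamma_s$ boundary term with \eqref{eq:free boundary}, then Young's inequality. As written, however, the proposal does not prove the lemma. You end up accepting a constant $C(n,F)\beta$ or $C(n,F)\beta^2$, whereas the statement asserts $C(n,F)$.

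The obstruction you describe is not real. It comes from the step leading to ``The result is\ldots'', where you replace the factor $\frac{C}{\beta}$ from Young's inequality by $C$. Keep that factor. With $\mathcal B^{ij}=\mcA^{ij}_\tau/(1+\tau W_f)$ and $\mathcal B^{ij}\eta_i\eta_j\le C(F)W\abs{D\eta}^2$, you get
\begin{equation*}
\frac{\beta}{2}\int_{B^{n+}_s}\eta^2z^{\beta-1}\mathcal B^{ij}z_iz_j\,\rd x+c(F)\int_{\S_{s,+}}\eta^2z^{\beta}\abs{h}^2\,\rd\mcH^n\le\frac{C(F)}{\beta}\int_{\S_{s,+}}z^{\beta+1}\abs{D\eta}^2\,\rd\mcH^n .
\end{equation*}
Hence $\beta^2\int_{B^{n+}_s}\eta^2z^{\beta-1}\mathcal B^{ij}z_iz_j\,\rd x\le 2C(F)\int_{\S_{s,+}}z^{\beta+1}\abs{D\eta}^2\,\rd\mcH^n$. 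Now use the lower bound in \eqref{eq:Btau-bounds} and $z\ge1$:
\begin{equation*}
\frac{\beta^2}{2}\int_{\S_{s,+}}\eta^2z^{\beta-2}\abs{\na z}^2\,\rd\mcH^n\le C(F)\beta^2\int_{B^{n+}_s}\eta^2z^{\beta-1}\mathcal B^{ij}z_iz_j\,\rd x\le C(F)\int_{\S_{s,+}}z^{\beta+1}\abs{D\eta}^2\,\rd\mcH^n .
\end{equation*}
No power of $\beta$ survives. The $\beta^2$ from differentiating $z^{\beta/2}$ is balanced by the $\beta$ on the left and the $\frac1\beta$ on the right. The curvature term is even better, since $\frac{C}{\beta}\le C$.

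Inserting these bounds into \eqref{eq:sobolev} gives \eqref{eq:one-step} with $C=C(n,F)$. No alternative test function such as $\eta^2z^{\beta+1}$ is needed. Your closing remark about ``the $\frac1\beta$ gain from Cauchy--Schwarz'' points at exactly this, but you leave it as something to check rather than carrying it out.

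You are right that a polynomial loss in $\beta$ would still be harmless for Lemma \ref{lem:shifted-Moser}, since $\prod_j(C4^j\beta_j^2)^{1/(p_j-1)}$ converges when $\beta_j\sim\chi^j$. That would, however, prove a weaker statement than the one asked.
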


\begin{proof}
Choose as before the standard cutoff function $0\le\eta\le1$ with $\eta\equiv1$ on $B^{n+}_r$, ${\rm spt}\eta\subset B^n_s$, and $|D\eta|\le\frac{2}{s-r}$.

Multiplying both sides of \eqref{eq:Btau-curvature} by $\eta^2z^\beta$, integration by parts using \eqref{eq:free boundary} gives (note that $\rd\mcH^n=W\rd x$)
\eq{
 c(F)\int_{\S_{s,+}}\eta^2z^\beta|h|^2
 \rd\mcH^n
 &\leq\int_{B^{n+}_{s}}\eta^2z^\beta
 D_i\left(\frac{\mathcal A_\tau^{ij}}{1+\tau W_f}z_j\right)\rd x\\
 &=-\int_{B^{n+}_{s}}
 \frac{\mathcal A_\tau^{ij}}{1+\tau W_f}z_j
 D_i(\eta^2z^\beta)\rd x\\
 &=-\beta\int_{B^{n+}_{s}}\eta^2z^{\beta-1}
 \frac{\mathcal A_\tau^{ij}}{1+\tau W_f}z_i z_j\rd x-2\int_{B^{n+}_{s}}\eta z^\beta
 \frac{\mathcal A_\tau^{ij}}{1+\tau W_f}\eta_i z_j\rd x\\
 &\leq-\frac{\beta}{2}\int_{B^{n+}_{s}}\eta^2z^{\beta-1}
 \frac{\mathcal A_\tau^{ij}}{1+\tau W_f}z_i z_j\rd x
 +\frac{2}{\beta}\int_{B^{n+}_{s}}z^{\beta+1}
 \frac{\mathcal A_\tau^{ij}}{1+\tau W_f}\eta_i\eta_j \rd x.
}
Therefore, by the upper bound in \eqref{eq:Btau-bounds}, we deduce
\eq{\label{eq:Moser-energy}
&\frac\beta2\int_{B^{n+}_{s}}\eta^2z^{\beta-1}\frac{\mathcal A_\tau^{ij}}{1+\tau W_f}z_i z_j\rd x+c(F)\int_{\S_{s,+}}\eta^2z^\beta|h|^2\rd \mcH^n\\
\leq&\frac{C(F)}{\beta}\int_{\Sigma_{s,+}}z^{\beta+1}|D\eta|^2\rd\mcH^n.
}

To proceed, we take $\varphi=\eta z^{\beta/2}$ in \eqref{eq:sobolev} (with $r$ therein chosen as $s$) and get
\eq{\label{iteration 1}
&\left(
\int_{\Sigma_{s,+}}(\eta z^{\beta/2})^{2\chi}\,d\mathcal H^n
 \right)^{1/\chi}\\
\leq&C(n,F)\left[\frac1s\int_{\Sigma_{s,+}}\eta^2z^\beta
 \rd\mcH^n
 +s\int_{\Sigma_{s,+}}
 \left(|\nabla\varphi|^2+\eta^2z^\beta|h|^2\right)
 \rd\mcH^n\right].
}
We assert that
\eq{
\int_{\Sigma_{s,+}}
|\nabla\varphi|^2\rd\mcH^n
\leq C(F)\int_{\Sigma_{s,+}}z^{\beta+1}|D\eta|^2\rd\mcH^n.
}
To see this, note that 
$|\nabla\varphi|^2
=g^{ij}D_i\varphi D_j\varphi
\le2z^\beta g^{ij}\eta_i\eta_j
 +\frac{\beta^2}{2}\eta^2z^{\beta-2}g^{ij}z_i z_j$,
and hence
\eq{\label{eq:Moser-gradient-control} \int_{\Sigma_{s,+}}|\nabla\varphi|^2\rd\mcH^n
&\overset{\eqref{eq:Btau-bounds}}{\leq}2\int_{\Sigma_{s,+}}z^\beta|D\eta|^2
 \rd\mcH^n+C(F)\beta^2
 \int_{B^{n+}_{s}}\eta^2z^{\beta-2}
 \frac{\mathcal A_\tau^{ij}}{1+\tau W_f}z_i z_j\rd x\\
&\leq2\int_{\Sigma_{s,+}}z^{\beta+1}|D\eta|^2
 \rd\mcH^n+C(F)\beta^2
 \int_{B^{n+}_{s}}\eta^2z^{\beta-1}
 \frac{\mathcal A_\tau^{ij}}{1+\tau W_f}z_i z_j\rd x\\
 &\overset{\eqref{eq:Moser-energy}}{\le}
C(F)\int_{\Sigma_{s,+}}z^{\beta+1}|D\eta|^2
 \rd\mcH^n,
}
where we have used $z\geq1$ for the second inequality.
This proves the asserted estimate.
Combining with \eqref{eq:Moser-energy} and \eqref{iteration 1}, and using $0\leq\eta\leq1, z\geq1$, we deduce
\eq{
\left(\int_{\S_{r,+}}z^{\beta\chi}\rd\mcH^n\right)^{1/\chi}
\leq&\left(
 \int_{\Sigma_{s,+}}
 (\eta z^{\beta/2})^{2\chi}\rd\mcH^n
 \right)^{1/\chi}\\
\leq&C(n,F)\left(
\frac1s\int_{\Sigma_{s,+}}z^{\beta+1}
 \rd\mcH^n
+s\int_{\Sigma_{s,+}}z^{\beta+1}|D\eta|^2
 \rd\mcH^n\right)\\
\leq&C(n,F)\frac{s}{(s-r)^2}
 \int_{\Sigma_{s,+}}z^{\beta+1}\rd\mcH^n.
}
The proof is completed.
\end{proof}


\begin{lemma}\label{lem:shifted-Moser}
Assume $0<R<\varrho$ and that \eqref{eq:one-step} holds whenever $R/2<r<s\leq R$.
Then for any $p_0>n$,
\begin{equation}\label{eq:shifted-Moser}
 \|1+\log W_f\|_{L^\infty(\Sigma_{R/2,+})}
 \le C(n,F,p_0)
 \left(
 R^{-n}\int_{\Sigma_{R,+}}(1+\log W_f)^{p_0}\rd\mcH^n
 \right)^{1/(p_0-n)}.
\end{equation}
\end{lemma}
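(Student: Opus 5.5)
We iterate \eqref{eq:one-step} with $z=1+\log W_f$ on a decreasing family of radii between $R/2$ and $R$. Because of the shift by one power (the right-hand side has exponent $\beta+1$, not $\beta$), this is not a standard Moser iteration. So we track $L^{p}$-norms along a sequence of exponents $p_k$ with $p_{k+1}=\chi(p_k-1)$, starting from $p_0>n$. Writing $\beta_k=p_k-1$, the inequality \eqref{eq:one-step} with $\beta=\beta_k$ reads
\begin{equation*}
\Bigl(\int_{\Sigma_{r_{k+1},+}} z^{p_{k+1}}\rd\mcH^n\Bigr)^{1/\chi}\le C(n,F)\frac{r_k}{(r_k-r_{k+1})^2}\int_{\Sigma_{r_k,+}} z^{p_k}\rd\mcH^n .
\end{equation*}
We take $r_k=\frac R2+\frac R{2^{k+1}}$, so that $r_k/(r_k-r_{k+1})^2\le C4^kR^{-1}$. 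We must check that $\beta_k\ge1$ for all $k$. Since $p_{k+1}-n=\chi(p_k-n)$ (indeed $\chi(n-1)=n$), we get $p_k-n=\chi^k(p_0-n)$. Thus $p_k\to\infty$ geometrically and $\beta_k\ge p_0-1\ge n-1\ge1$.

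Set $I_k=\int_{\Sigma_{r_k,+}}z^{p_k}\rd\mcH^n$. Then $I_{k+1}\le (C4^kR^{-1} I_k)^{\chi}$. Taking logarithms, $a_k=\log I_k$ satisfies
\begin{equation*}
a_{k+1}\le \chi a_k+\chi\bigl(k\log4+\log C-\log R\bigr),
\end{equation*}
and unrolling gives
\begin{equation*}
a_k\le \chi^k a_0+\sum_{j<k}\chi^{k-j}\bigl(j\log 4+\log C-\log R\bigr).
\end{equation*}
Since $\sum_j j\chi^{-j}<\infty$, the sum is bounded by $\chi^k\bigl(C'-\frac{\chi}{\chi-1}\log R\bigr)$ up to lower-order terms. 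Because $\frac{\chi}{\chi-1}=n$, this gives
\begin{equation*}
I_k\le \bigl(C\,R^{-n} I_0\bigr)^{\chi^k}\cdot R^{n}\cdot(\text{bounded factor}).
\end{equation*}
Here the $R$-powers must be tracked exactly. The cleanest way to do this is to first rescale to $R=1$: the statement is invariant under $x\mapsto x/R$, $u\mapsto u/R$, since $z$ depends only on $Du$ and $\mcH^n$ scales by $R^n$.

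Finally,
\begin{equation*}
\|z\|_{L^{p_k}(\Sigma_{R/2,+})}\le I_k^{1/p_k}\le \bigl(CR^{-n}I_0\bigr)^{\chi^k/p_k}\,C^{1/p_k}.
\end{equation*}
As $k\to\infty$, $\chi^k/p_k\to 1/(p_0-n)$, since $p_k=n+\chi^k(p_0-n)$. The left-hand side tends to $\|z\|_{L^\infty(\Sigma_{R/2,+})}$, because $\mcH^n(\Sigma_{R/2,+})$ is finite, $z$ is continuous up to $\Gamma$, and $z\ge1$. This yields \eqref{eq:shifted-Moser}. The exponent $1/(p_0-n)$ appears exactly from this limit, which explains the hypothesis $p_0>n$.

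The main obstacle is the bookkeeping of the constants. The errors are of the form $\sum_j \chi^{k-j}\cdot j$, and we need them to contribute only a factor $C^{\chi^k}$, so that after taking the $1/p_k$ power they remain bounded. This is guaranteed by the geometric convergence $\sum_j j\chi^{-j}<\infty$. A minor point is that $I_0$ could be small, but $z\ge1$ ensures $R^{-n}I_0\ge c>0$, so the constants can be absorbed.
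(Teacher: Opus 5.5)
Your proposal is correct and follows essentially the same route as the paper. You use the same shifted exponents $p_{k+1}=\chi(p_k-1)$ with $p_k-n=\chi^k(p_0-n)$ and the same dyadic radii, and the exponent $1/(p_0-n)$ comes from $\chi^k/p_k\to 1/(p_0-n)$, which is the paper's $\prod_{j\le N} p_j/(p_j-1)=\chi^{N+1}p_0/p_{N+1}$. The only difference is presentational: the paper normalizes by $R^{-n}$ from the start via $\mathcal Y_j$, so the $R$-powers cancel at each step, whereas you track them by taking logarithms or by rescaling to $R=1$.
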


\begin{proof}
Let $p_{j+1}=\chi(p_j-1)$ and $R_j=\frac{R}2+\frac{R}{2^{j+1}}$, then
\eq{
p_j-n
=\chi^j(p_0-n),\quad
R_j\searrow\frac{R}2.
}
In particular, $p_j\to\infty$ and $\beta_j\coloneqq p_j-1\geq1$.

Applying \eqref{eq:one-step} with $\beta=\beta_j$, $s=R_j$, $r=R_{j+1}$, and putting $\mathcal Y_j\coloneqq\left(R^{-n}\int_{\S_{R_j,+}}z^{p_j}\rd\mcH^n\right)^{1/p_j}$, we get
\eq{
\mathcal Y_{j+1}
\leq\bigl(C(n,F)4^j\bigr)^{1/(p_j-1)}
 \mathcal Y_j^{p_j/(p_j-1)}.
}
Moreover, we note that
\eq{
\prod_{j=0}^{N}\frac{p_j}{p_j-1}
 =
 \frac{\chi^{N+1}p_0}{p_{N+1}}
 \to
 \frac{p_0}{p_0-n}.
}
The rest of the proof is then standard.

\end{proof}

\begin{remark}
\normalfont
Here $p_0>n$ is different from the one in \cite[Proposition 4.6]{WWXZ26}, which is because the graph is not an anisotropic minimal graph, so an extra term $\int |h|^2\varphi^2\,d\mathcal H^n$ appears in the boundary Sobolev inequality \eqref{eq:sobolev}, compared with \cite[Proposition 4.3]{WWXZ26}.
\end{remark}

\begin{proof}[Proof of Theorem \ref{uniform gradient estimate}]
Taking $p_0=n+1$ in Lemma \ref{lem:shifted-Moser}, we get
\eq{\label{eq:gradient-proof-Moser}
 \|z\|_{L^\infty(\Sigma_{R/2,+})}
 \leq&C(n,F)R^{-n}
 \int_{\Sigma_{R,+}}z^{n+1}\rd\mcH^n\\
 \leq&C(n,F)R^{-n}\int_{B^{n+}_{R}}W_{f,\tau}z^{n+1}\rd x.
}
Applying Lemma \ref{prop:upper bound of log integral} with $m=n+1$ then readily proves \eqref{eq:uniform-gradient} for $x_0=0$, the general case $x_0\in\Gamma_\varrho$ follows after translation. And an identical argument proves \eqref{eq:uniform-gradient-interior-ball} for the interior ball.
The proof is complete.

\end{proof}

\subsection{Solvability of the mixed boundary problem}

In this subsection we solve the mixed boundary problem by using the barrier method, together with all the estimates we have obtained in previous subsections. First we provide a linear function to give a uniform $L^{\infty}$-bound  for $w_\tau$ as follows.

\begin{lemma}\label{lem:affine}
There is a unique $t_0\in\mbR$ such that
\eq{\label{eq:t0}
 \langle Df(t_0e_1),e_1\rangle=0.
}
Moreover, the affine function $\ell(x)=t_0x_1$
satisfies
\eq{\label{eq:uniform elliptic-PDE one boundary}
 \begin{cases}
 \operatorname{div}(Df_\tau(D\ell))=0 & \text{in }\Omega_\varrho,\\[1mm]
 \langle Df_\tau(D\ell),e_1\rangle=0 & \text{on }\Gamma_\varrho.
 \end{cases}
}

\end{lemma}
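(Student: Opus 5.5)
The argument has two parts: a one-variable monotonicity argument for $t\mapsto\langle Df(te_1),e_1\rangle$, and a direct check that constant-gradient functions solve \eqref{eq:uniform elliptic-PDE one boundary}.

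\textbf{Existence and uniqueness of $t_0$.} Consider
\[
\psi(t)\coloneqq\langle Df(te_1),e_1\rangle=f_1(te_1), \qquad t\in\mbR .
\]
By Lemma \ref{lem:structure of f}, $D^2f(p)$ is positive definite for every $p$. Hence $\psi'(t)=f_{11}(te_1)>0$, so $\psi$ is strictly increasing and there is at most one zero. For existence I use the representation $Df(p)=-(\bar D_1F,\dots,\bar D_nF)(-p,1)$, which gives
\[
\psi(t)=-\bar D_1F(-t,0,\dots,0,1).
\]
Since $\bar DF$ is zero-homogeneous,
\[
\bar D_1F(-t,0,\dots,0,1)=\bar D_1F\Bigl(\tfrac{(-t,0,\dots,0,1)}{\sqrt{1+t^2}}\Bigr).
\]
As $t\to+\infty$ the unit vector inside tends to $-e_1$, so by continuity $\psi(t)\to-\bar D_1F(-e_1)$. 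By \eqref{eq:<DF(z),z>=F(z)}, $\bar D_1F(-e_1)=-F(-e_1)<0$, so this limit is $F(-e_1)>0$. Likewise, as $t\to-\infty$ the vector tends to $e_1$, and $\psi(t)\to-\bar D_1F(e_1)=-F(e_1)<0$. The intermediate value theorem then gives a zero $t_0$, and it is unique by strict monotonicity. These sign facts are exactly the ones used in the proof of Lemma \ref{lem:normal controlled by tangential}.

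\textbf{The affine function.} The gradient $D\ell=t_0e_1$ is constant, so $Df_\tau(D\ell)$ is a constant vector and its divergence vanishes in $\Om_\varrho$. On $\Gamma_\varrho$ I use $Df_\tau=(1+\tau f)Df$:
\[
\langle Df_\tau(t_0e_1),e_1\rangle=(1+\tau f(t_0e_1))\,\psi(t_0)=0 .
\]
So $\ell$ satisfies \eqref{eq:uniform elliptic-PDE one boundary} for every $\tau\in(0,1]$.

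The only step with any real content is the limiting behaviour of $\psi$ at $\pm\infty$, and that follows directly from zero-homogeneity together with \eqref{eq:<DF(z),z>=F(z)}.
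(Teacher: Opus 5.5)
Your proposal is correct and follows essentially the same route as the paper. You use strict monotonicity of $t\mapsto\langle Df(te_1),e_1\rangle$ from $f_{11}>0$, compute the limits $-F(e_1)<0$ and $F(-e_1)>0$ at $\mp\infty$ via zero-homogeneity of $\bar DF$ and Euler's relation, and then check that a constant gradient together with $Df_\tau=(1+\tau f)Df$ gives both equations for $\ell$.
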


\begin{proof}
We first note that the function $t\mapsto\langle Df(te_1),e_1\rangle$ is strictly increasing since
\eq{
 \frac{\rd}{\rd t}\left< Df(te_1),e_1\right>
 =f_{11}(te_1)
 \overset{\eqref{eq:D2f-global}}{\geq}\frac{c(F)}{\sqrt{1+t^2}}\left(1-\frac{t^2}{1+t^2}\right)>0.
}
By the zero-homogeneity of $\bar DF$ and \eqref{eq:<DF(z),z>=F(z)}, we find 
\eq{
 \lim_{t\to-\infty}\langle Df(te_1),e_1\rangle=-F(e_1)<0,
 \quad
 \lim_{t\to+\infty}\langle Df(te_1),e_1\rangle=F(-e_1)>0.
}
Thus we see $t_0$ exists and is unique. Finally, it is obvious that $\ell=t_0x_1$ satisfies \eqref{eq:uniform elliptic-PDE one boundary}, which completes the proof.
\end{proof}

\begin{proposition}\label{prop:height}
For every fixed $0<\tau\leq1$, the minimizer $w_\tau$ satisfies
\eq{\label{eq:height}
 \|w_\tau-\ell\|_{L^\infty(\Omega_\varrho)}\le \sup_{S_\varrho}|\phi-\ell|.
}
\end{proposition}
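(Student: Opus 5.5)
Proposition \ref{prop:height} is a weak comparison principle between $w_\tau$ and the two affine barriers $\ell\pm M$, where $M\coloneqq\sup_{S_\varrho}|\phi-\ell|$. The argument uses only the weak formulation \eqref{eq:weak-ftau} and the strict monotonicity of $Df_\tau$. By Lemma \ref{lem:affine}, $\ell+M$ and $\ell-M$ solve \eqref{eq:uniform elliptic-PDE one boundary}. Since $\ell$ is affine and $D\ell\equiv t_0e_1$ is constant, we have for every $\zeta\in W^{1,2}(\Om_\varrho)$ vanishing on $S_\varrho$:
\eq{
\int_{\Om_\varrho}\left<Df_\tau(D\ell),D\zeta\right>\rd x
=\int_{\Om_\varrho}\div\bigl(\zeta\,Df_\tau(t_0e_1)\bigr)\rd x
=-\int_{\Gamma_\varrho}\zeta\left<Df_\tau(t_0e_1),e_1\right>\rd\mcH^{n-1}=0.
}
Here the boundary integral over $S_\varrho$ vanishes because $\zeta=0$ there, and the integral over $\Gamma_\varrho$ vanishes because $Df_\tau=(1+\tau f)Df$ and $\langle Df(t_0e_1),e_1\rangle=0$ by \eqref{eq:t0}.

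For the upper bound, take the test function $\zeta\coloneqq(w_\tau-\ell-M)^+$. It lies in $W^{1,2}(\Om_\varrho)$, and its trace on $S_\varrho$ is $(\phi-\ell-M)^+=0$ by the choice of $M$. So $\zeta$ is admissible both in \eqref{eq:weak-ftau} and in the identity above. Subtracting the two gives
\eq{
\int_{\{w_\tau>\ell+M\}}\left<Df_\tau(Dw_\tau)-Df_\tau(D\ell),Dw_\tau-D\ell\right>\rd x=0.
}
By the uniform convexity \eqref{eq: uniformelliptic}, the integrand is at least $c(F)\tau|Dw_\tau-D\ell|^2$. Hence $D\zeta=0$ a.e. in $\Om_\varrho$. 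Since $\zeta$ has zero trace on $S_\varrho$, which has positive $\mcH^{n-1}$-measure, the Poincar\'e inequality (already used in Proposition \ref{prop:mixed boundary problem}) gives $\zeta\equiv0$, that is, $w_\tau\le\ell+M$ a.e.

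The lower bound is symmetric: use $\zeta=(\ell-M-w_\tau)^+$ and obtain $w_\tau\ge\ell-M$. Together the two bounds give \eqref{eq:height}. Since $w_\tau$ is continuous in $\Om_\varrho$ by Proposition \ref{prop:mixed boundary problem}(3), the a.e. bound is in fact pointwise.

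The only delicate point is the admissibility of the truncated test functions, namely that the trace of $(w_\tau-\ell-M)^+$ on $S_\varrho$ equals $(\phi-\ell-M)^+$. This holds because the trace operator commutes with the Lipschitz map $t\mapsto t^+$ on $W^{1,2}$. No condition is imposed on $\Gamma_\varrho$, because the conormal condition is built into the weak formulation \eqref{eq:weak-ftau}.
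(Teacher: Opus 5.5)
Your proof is correct and is the paper's argument: test the difference of the weak equations for $w_\tau$ and $\ell$ with the truncations $(w_\tau-\ell-M)^{+}$ and $(\ell-M-w_\tau)^{+}$, use monotonicity of $Df_\tau$ to get $D\zeta=0$ a.e., and conclude $\zeta\equiv0$ by Poincar\'e. The differences are cosmetic. You verify the weak identity for $\ell$ explicitly via the divergence theorem and use the quantitative convexity from Lemma~\ref{lem:Gq-coercive}, whereas the paper cites Lemma~\ref{lem:affine}, appeals only to strict monotonicity, and works with the one-sided constants $\sup_{S_\varrho}(\phi-\ell)$ and $\inf_{S_\varrho}(\phi-\ell)$.
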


\begin{proof}
By Proposition \ref{prop:mixed boundary problem} and Lemma \ref{lem:affine},
for any $\zeta\in W^{1,2}(\Omega_\varrho)$ with $\zeta\mid_{S_\varrho}=0$, we have 
\eq{\label{comparison}
    \int_{\Om_\varrho}\left<D f_\tau\left(D w_\tau\right)-D f_\tau(D \ell), D \zeta\right>\rd x
    =0.
}
Taking $\zeta=\zeta_{\tau,+}\coloneqq(w_\tau-\ell-\sup _{S_\varrho}(\phi-\ell))_+\in W^{1,2}$ 
in \eqref{comparison},  we get that $D\zeta_{\tau,+}=0$ almost every in $\Om_\varrho$ thanks to an estimate of $f_\tau$ of the form \eqref{eq:strict-monotone}, which holds since $f_\tau$ is convex.
It follows that from the Poincar\'e inequality that $\zeta_{\tau,+}=0$ and consequently $w_\tau\le \ell+\sup _{S_\varrho}(\phi-\ell)$ a.e. in $\Om_\varrho$.
Similarly, one finds $w_\tau\geq \ell+\inf_{S_\varrho}(\phi-\ell)$ a.e. in $\Om_\varrho$, which completes the proof.

\end{proof}

\begin{proof}[Proof of Theorem \ref{thm:replacement}]

By Proposition \ref{prop:height}, we have uniformly for any $0<\tau\leq1$ that
\eq{\label{eq:rep-height}
 \|w_\tau\|_{L^\infty(\Omega_\varrho)}
 \leq C.
}
Thanks to \eqref{eq:energy-uniform}, we can apply the $BV$ compactness to extract a sequence $\tau_j\searrow0$ such that for some $w\in BV(\Omega_\varrho)$,
\eq{\label{eq:L1-conv}
w_{\tau_j}\to w
 \quad\text{in }L^1(\Omega_\varrho).
}

For any fixed $U\subset\subset\Omega_\varrho\cup\Gamma_\rho$, we cover it by finitely many sets of the following two classes: the first are $B^{n+}_{R/2}(x_0)\cup\Gamma_{R/2}(x_0)$ with $x_0\in\Gamma_\varrho$ and $B^{n+}_{2R}(x_0)\subset\Om_\varrho$, $\Gamma_{2R}(x_0)\subset\Gamma_\varrho$; the second are $B^n_{R/2}(x_0)$ with $B^n_{2R}(x_0)\subset\Om_\varrho$.
On every such set, Theorem \ref{uniform gradient estimate} and \eqref{eq:rep-height} give a gradient bound independent of $\tau${, say $C_0$. By \eqref{eq:D2f-global} there exist positive constants $\lambda_0,\Lambda_0$ independent of $\tau$, such that for any $p\in\mbR^n$ with $\abs{p}\leq C_0$,
\eq{
\lambda_0{\rm I}
\leq D^2f_\tau(p)
\leq\Lambda_0{\rm I},
}
and along the boundary, one has $(f_\tau)_1(Dw_\tau)=0$ and the uniform oblique condition $(f_\tau)_{11}(p)\geq\lambda_0$.
}
In view of the classical regularity theory (cf. \cite{LT86}), we thus obtain
\eq{\label{eq:uniform-local-C2sigma}
 \|w_{\tau_j}\|_{C^{2,\sigma}(U)}\le C_U,
}
for some $\sigma,C_U>0$ independent of $j\in\mbN$.
By a diagonal argument, after passing to a further subsequence (still indexed by $j\in\mbN$),
\eq{\label{eq:C2local-rep}
 w_{\tau_j}\to w
 \quad\text{in }C^2_{loc}(\Omega_\varrho\cup\Gamma_\varrho).
}
We note that the limit agrees with the $L^1$-limit in \eqref{eq:L1-conv}.
Moreover, since $Df_{\tau_j}=(1+\tau_jf)Df$, and the factor $1+\tau_jf(Dw_{\tau_j})$
converges locally uniformly to $1$ as $j\to\infty$, we thus deduce that $w$ satisfies \eqref{eq:replacement-PDE} classically.

We assert that $w\in W^{1,1}(\Omega_\varrho)$. Indeed, let $K_i\subset\subset\Omega_\varrho$ be an exhaustion of $\Om_\varrho$, then by \eqref{eq:L1-conv} we have
\eq{
 \int_{K_i}|Dw|\rd x
 \leq\liminf_{j\to\infty}\int_{\Omega_\varrho}|Dw_{\tau_j}|\rd x
 \le C
}
for some $C>0$ independent of $\tau_j$ and $K_i$, thanks to \eqref{eq:energy-uniform}.
Letting $i\to\infty$ and using the monotonicity convergence yields the assertion.

We next prove \eqref{eq:replacement-trace} by the classical barrier method: we fix
$y\in S_\varrho\setminus E_\varrho$ and choose $r_y>0$ so small that
$B^n_{r_y}(y)\cap\Gamma_\varrho=\emptyset$. Let $\nu_y$ be the outer
normal of $\partial B^n_\varrho$ at $y$.  
For every $x\in\overline{B^n_\varrho}$, it is easy to see that
\eq{\label{eq:sphere-convex}
 -\left<\nu_y,x-y\right>
 =\frac{|x-y|^2}{2\varrho}+\frac{\varrho^2-|x|^2}{2 \varrho}
 \geq \frac{|x-y|^2}{2\varrho}.
}
Since $\phi$ is $C^2$ on the spherical cap,
\eq{\label{ineq:phi-sphereical-bdry}
 \Abs{\phi(x)-\phi(y)-\left<D \phi(y),x-y\right>}
 \leq C_y|x-y|^2
 \quad\text{for any }x\in S_\varrho\cap B_{r_y}(y).
}
For $L>0$ to be determined, we define the affine functions
\eq{
 L_y^\pm(x)
 =\phi(y)+\left< D \phi(y),x-y\right>
 \mp L\left<\nu_y,x-y\right>,
}
which clearly satisfy
\eq{
{\rm div}\left(Df_\tau(DL_y^\pm)\right)
=0.
}

On $S_\varrho\cap B_{r_y}(y)$, in view of \eqref{ineq:phi-sphereical-bdry} and \eqref{eq:sphere-convex}, once we choose $L\geq2\varrho C_y$, then we have
\eq{
L^-_y
\leq\phi
=w_\tau\leq L^+_y.
}
While on $\Om_\varrho\cap\p B_{r_y}(y)$, we have $-\left<\nu_y,x-y\right>\geq\frac{r_y^2}{2\varrho}$ and
\eq{
\Abs{w_\tau(x)-\phi(y)-\left<D\phi(y),x-y\right>}
\leq C_M+\abs{\phi(y)}+\abs{D\phi(y)}r_y,
}
where $C_M\coloneqq\sup_{0<\tau\leq1}\norm{w_\tau}_{L^\infty(\Om_\varrho)}$ is a finite number thanks to \eqref{eq:rep-height}.
Thus after further increasing $L\geq\frac{2\varrho}{r_y^2}\left(C_M+\abs{\phi(y)}+\abs{D\phi(y)}r_y\right)$, we find
\eq{
 L_y^-\le w_\tau\le L_y^+
 \quad\text{on }\p\left(\Om_\varrho\cap B_{r_y}(y)\right).
}
So the weak comparison principle yields $L_y^-\leq w_\tau\leq L_y^+$ in $\Omega_\varrho\cap B_{r_y}(y)$. Passing to the limit along $\tau_j\searrow0$ yields
\eq{\label{ineq:w-L^--L^+-compare}
L_y^-
\leq w\leq L_y^+\quad\text{in }\Omega_\varrho\cap B_{r_y}(y).
}
Since $0\leq-\left<\nu_y,x-y\right>\leq\abs{x-y}$, it follows that for every $x\in \Omega_\varrho\cap B_{r_y}(y)$,
\eq{\label{ineq:w(x)-phi(y)}
|w(x)-\phi(y)| \leq\left(\left|D\phi(y)\right|+L\right) | x-y|.
}
Letting $x\to y$ then shows \eqref{eq:replacement-trace}.

Now we prove \eqref{eq:replacement-weak}.
Let $\zeta$ be a test function with the required properties. For a fixed $\de>0$, we take a cutoff $\eta_\delta$ which$\equiv0$ on $\{x:{\rm dist}(x,S_\varrho)\leq\de\}$, $\equiv1$ on $\{x:{\rm dist}(x,S_\varrho)\geq2\de\}$, and satisfies $\abs{D\eta_\de}\leq C/\de$. Note that $\eta_\delta\zeta$ is admissible in \eqref{eq:weak-ftau}.
By \eqref{eq:C2local-rep}, we have $Df_{\tau_j}(Dw(\tau_j))\to Df(Dw)$.
Passing to the limit in
\eqref{eq:weak-ftau}, we get
\eq{
0
=\int_{\Om_\varrho}\left<Df(Dw),D(\eta_\delta\zeta)\right>\rd x
=\int_{\Om_\varrho} \eta_\de\left<Df(Dw),D\zeta\right>\rd x+\int_{\Om_\varrho}\zeta\left<Df(Dw),D\eta_\de\right>\rd x.
}
Since $Df$ is bounded, $\zeta=0$ on $S_\varrho$, and $|\zeta(x)|\le C{\rm dist}(x,S_\varrho)$ near $S_\varrho$, the first integral on the right converges to $\int_{\Om_\varrho}\left<Df(Dw),D\zeta\right>\rd x$ by dominated convergence, while the second term on the right satisfies
\eq{
\int_{\Om_\varrho}\zeta\left<Df(Dw),D\eta_\de\right>\rd x
\leq\frac{C}\de\int_{\{x\in\Om_\varrho:\de<{\rm dist}(x,S_\varrho)<2\de\}\cap{\rm spt}\zeta}\abs{\zeta}\rd x
\to0\text{ as }\de\searrow0.
}
Thus proving \eqref{eq:replacement-weak}.
The uniqueness is standard to prove and hence omitted, we complete the proof.

\end{proof}

\section{Removability of singularities}\label{sec:4}



\begin{proof}[Proof of Corollary \ref{Prop:removable-singularity}]
Since $K\subset\Om_s\cup\Gamma_s$, there exists $0<\varrho<s$ such that $K\subset\subset \Om_\varrho\cup\Gamma_\varrho$, and $K\cap\overline S_\varrho=\emptyset$.
Consequently, $u\mid_{\overline S_\varrho}$ is $C^2$ and has a $C^2$-extension in a neighborhood of $\overline S_\varrho$, say $\tilde u$.
Now let $w$ be the solution given by Theorem \ref{thm:replacement} in $\Om_\varrho$ with $\phi\coloneqq\tilde u$. 
We note, it suffices to prove that
\eq{
u=w\quad\text{in }\Om_\varrho\setminus K.
}
To prove this, note that $\mcH^{n-1}\left(K\cup E_\varrho\right)=0$ by assumption, and hence for every $\varepsilon>0$ we can find finitely many balls $B^n_{\de_i}(x_i)$ with $x_i\in K\cup E_\varrho$, such that
\eq{
K\cup E_\varrho
\subset\bigcup_{i=1}^{N_\varepsilon}B^n_{\de_i}(x_i),
}
and $\sum_{i=1}^{N_\varepsilon}\de_i^{n-1}<\varepsilon$. 
Then we choose smooth functions $\eta_i$ satisfying $0\leq\eta_i\leq1$ and $\eta_i\equiv0$ on $B^n_{\de_i}(x_i)$, $\equiv1$ outside $B^n_{2\de_i}(x_i)$, as well as $\abs{D\eta_i}\leq\frac{C}{\de_i}$ for some $C$ independent of $i$.
Put $\eta_\varepsilon\coloneqq\prod_{i=1}^{N_\varepsilon}\eta_i$, then $\eta_\varepsilon=0$ in a neighborhood of $K\cup E_\varrho$ and $\eta_\varepsilon\to1$ pointwise in $\Om_\varrho\setminus K$.
Moreover,
\eq{\label{eq:removable-cutoff}
\int_{\Om_\varrho}\abs{D\eta_\varepsilon}\rd x
\leq\sum_{i=1}^{N_\varepsilon}\int_{\Om_\varrho\cap B^n_{2\de_i}(x_i)}\abs{D\eta_i}\rd x
\leq C\sum_{i=1}^{N_\varepsilon}\de_i^{n-1}
\leq C\varepsilon.
}

To proceed, we consider as in \cite{Simon77} the function $\arctan(u-w)$.
Since $\eta_\varepsilon$ vanishes in a neighborhood of $E_\varrho$, there exist a compact set $U_\varepsilon\subset\subset S_\varrho\setminus E_\varrho$ and some $\tilde\de_\varepsilon>0$ such that for any $x\in\Om_\varrho\cap{\rm spt}\eta_\varepsilon$ with ${\rm dist}(x,S_\varrho)<2\tilde\de_\varepsilon$, the nearest point projection of $x$ onto $S_\varrho$, say $y\coloneqq\pi_{S_\varrho}(x)$, satisfies
$y\in U_\varepsilon\subset\subset S_\varrho\setminus E_\varrho$.
Thus we can argue as in the proof of Theorem \ref{thm:replacement}, see in particular \eqref{ineq:w(x)-phi(y)}, to obtain that
\eq{\label{ineq:w(x)-phi(y)-C_varep}
\abs{w(x)-\phi(y)}
\leq\left(\abs{D\phi(y)}+L\right)\abs{x-y}
\eqqcolon\tilde C_\varepsilon{\rm dist}(x,S_\varrho),
}
provided that $L=L(\varepsilon)$ is suitably large.

On the other hand, since $u=\phi$ on $S_\varrho$ and $u$ is $C^2$ in a neighborhood of $U_\varepsilon$, after decreasing $\tilde\de_\varepsilon$ if needed, we obtain
\eq{
\abs{u(x)-\phi(y)}
=\abs{u(x)-u(y)}
\leq\tilde C_\varepsilon\abs{x-y}
=\tilde C_\varepsilon{\rm dist}(x,S_\varrho).
}
Combining with \eqref{ineq:w(x)-phi(y)-C_varep}, we thus find constants
$C_\varepsilon,\delta_\varepsilon>0$ with the property that
\eq{\label{eq:boundary-linear-u-w}
 |\arctan(u(x)-w(x))|
 \le |u(x)-w(x)|
 \le C_\varepsilon{\rm dist}(x,S_\varrho),
}
whenever $x\in\Om_\varrho\cap{\rm spt}\eta_\varepsilon$ and ${\rm dist}(x,S_\varrho)<2\de_\varepsilon$.

For $0<\de<\de_\varepsilon$, we choose another cutoff function $\chi_\de$ such that $\chi_\de(x)=0$ if ${\rm dist}(x,S_\varrho)\leq\de$, $=1$ if ${\rm dist}(x,S_\varrho)\geq2\de,$ and $\abs{D\chi_\de}\leq\frac{C}\de$.
Testing for both $w$ and $u$ the integral identity of the form \eqref{eq:replacement-weak} with $\zeta=\eta_\varepsilon\chi_\de\arctan(u-w)$, and then subtracting, yields
\eq{
&\int_{\Om_\varrho}\frac{\eta_\varepsilon\chi_\de}{1+(u-w)^2}\left<Df(Du)-Df(Dw),Du-Dw\right>\rd x\\
=&-\int_{\Om_\varrho}\chi_\de\arctan(u-w)\left<Df(Du)-Df(Dw),D\eta_\varepsilon\right>\rd x\\
&-\int_{\Om_\varrho}\eta_\varepsilon\arctan(u-w)\left<Df(Du)-Df(Dw),D\chi_\de\right>\rd x.
}
By \eqref{eq:boundary-linear-u-w} and the bound on $D\chi_\de$, we can let $\de\searrow0$ and use a standard dominated convergence argument (as in the proof of Theorem \ref{thm:replacement}) to get
\eq{\label{eq:removable-arctan}
&\int_{\Om_\varrho}\frac{\eta_\varepsilon}{1+(u-w)^2}\left<Df(Du)-Df(Dw),Du-Dw\right>\rd x\\
=&-\int_{\Om_\varrho}\arctan(u-w)\left<Df(Du)-Df(Dw),D\eta_\varepsilon\right>\rd x.
}

By \eqref{eq:removable-cutoff} we see
\eq{
\Abs{\int_{\Om_\varrho}\arctan(u-w)\left<Df(Du)-Df(Dw),D\eta_\varepsilon\right>\rd x}
\leq C(F)\int_{\Om_\varrho}\abs{D\eta_\varepsilon}\rd x
\leq C(F)\varepsilon,
}
and hence
\eq{
0\leq\int_{\Om_\varrho}\frac{\eta_\varepsilon}{1+(u-w)^2}\left<Df(Du)-Df(Dw),Du-Dw\right>\rd x
\leq C(F)\varepsilon.
}
Letting $\varepsilon\searrow0$, by Fatou's Lemma we find
\eq{
\int_{\Om_\varrho\setminus K}\frac{\left<Df(Du)-Df(Dw),Du-Dw\right>}{1+(u-w)^2}\rd x
=0.
}
Hence by \eqref{eq:strict-monotone}, we must have $Du=Dw$ a.e. in $\Om_\varrho\setminus K$.
Using $\mcH^{n-1}(K)=0$, we thus find $u-w=c$ in $\Om_\varrho\setminus K$ for some constant $c$.
Finally, we fix a $y\in S_\varrho\setminus E_\varrho$. In view of \eqref{eq:replacement-trace}, we conclude that
\eq{
\lim_{\Om_\varrho\ni x\to y}w(x)
=\phi(y)
=u(y),
}
and hence the constant above must be $c=0$. This completes the proof.

\end{proof}

\section{Proof of the half-space anisotropic Bernstein theorem}\label{Sec:5}

We divide the proof of Theorem \ref{Thm:Bernstein-aniso} into several Lemmatum. Our starting point is that, by a standard calibration argument we have the following fact:
\begin{lemma}\label{lem:calibration}
Let $u\in C^2(\overline{\mbR^n_+})$ solve \eqref{defn:AMSE} and satisfy \eqref{defn:anisotropic-free-bdry-Intro}. Then its subgraph
\eq{
E_u
\coloneqq\{(x,t)\in\mbR^{n+1}_+:t<u(x)\}
}
is an $F$-minimizer in $(A,\mbR^{n+1}_+)$ for every open set
$A\subset\mbR^{n+1}$.
\end{lemma}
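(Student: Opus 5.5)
We will prove Lemma~\ref{lem:calibration} by the standard calibration argument, adapted to the free boundary. The calibration is the vector field on $\mbR^{n+1}_+$ obtained by extending the anisotropic normal of the graph vertically:
\eq{
\Phi(x,t)\coloneqq \bar DF(\nu(x)),\qquad \nu(x)=\frac{(-Du(x),1)}{\sqrt{1+\abs{Du(x)}^2}},
}
which is independent of $t$. Since $\bar DF$ is zero-homogeneous, $\Phi(x,t)=\bar DF(-Du(x),1)$, so
\eq{
\Phi=\bigl(-Df(Du),\,\bar D_{n+1}F(-Du,1)\bigr).
}
Three properties will then be checked. First, $\bar{\rm div}\,\Phi=-{\rm div}(Df(Du))=0$ by \eqref{defn:AMSE}, the last component being $t$-independent. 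Second, $\left<\Phi,e_1\right>=-\left<Df(Du),e_1\right>=0$ on $\p\mbR^{n+1}_+$ by \eqref{defn:anisotropic-free-bdry-Intro}. Third, by \eqref{eq:<DF(z),z>=F(z)} we have $\left<\Phi,\nu\right>=F(\nu)$ on ${\rm graph}\,u$. Moreover, for every unit vector $N$, $\left<\bar DF(\nu),N\right>\le F(N)$. This holds because $F$ is convex and one-homogeneous: $F(N)\ge F(\nu)+\left<\bar DF(\nu),N-\nu\right>=\left<\bar DF(\nu),N\right>$.

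For the comparison, fix $W\subset\subset A$ open and $G\subset\mbR^{n+1}_+$ with $E_u\Delta G\subset\subset W$. Choose a bounded open set $W'$ with $E_u\Delta G\subset\subset W'\subset\subset W$. We apply the divergence theorem for sets of finite perimeter to $\Phi$ on $(E_u\setminus G)\cup(G\setminus E_u)$, or equivalently we compute $\int (\chi_{E_u}-\chi_G)\,\bar{\rm div}\Phi=0$. Since $\chi_{E_u}-\chi_G$ is compactly supported in $W'\cap\overline{\mbR^{n+1}_+}$, the integration by parts produces
\eq{
0=\int_{\p^*E_u\cap\mbR^{n+1}_+\cap W}\left<\Phi,\nu_{E_u}\right>\rd\mcH^n-\int_{\p^*G\cap\mbR^{n+1}_+\cap W}\left<\Phi,\nu_G\right>\rd\mcH^n+\text{(boundary terms on }\p\mbR^{n+1}_+).
}
The boundary terms carry the factor $\left<\Phi,-e_1\right>$, which vanishes, so they drop out. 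On $\p^*E_u$, which is the graph, $\nu_{E_u}=\nu$, and the integrand equals $F(\nu)$. On $\p^*G$ we use the inequality $\left<\Phi,\nu_G\right>\le F(\nu_G)$. Combining these gives $P_F(E_u;\mbR^{n+1}_+\cap W)\le P_F(G;\mbR^{n+1}_+\cap W)$, which is \eqref{defn:F-minimizer}.

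The main technical point is justifying the divergence theorem up to the flat boundary, with traces of $\chi_G$ on $\p\mbR^{n+1}_+$. We handle it as follows. Since $u\in C^2(\overline{\mbR^n_+})$, the field $\Phi$ is $C^1$ up to $\{x_1=0\}$. We test $D(\chi_{E_u}-\chi_G)$ in $\mbR^{n+1}$ against $\Phi\,\psi$, where $\psi\in C^1_c(W)$ equals $1$ near $\overline{E_u\Delta G}$. Here $\Phi$ is extended to $\{x_1\le0\}$ arbitrarily, since both sets lie in $\mbR^{n+1}_+$. The portion of $D\chi$ supported on $\{x_1=0\}$ is $e_1$ times the trace, so it pairs with $\left<\Phi,e_1\right>=0$. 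The part of $\psi$ away from $E_u\Delta G$ cancels between the two sets. This makes the formal computation rigorous and completes the proof.
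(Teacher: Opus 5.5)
Your proposal is correct and is precisely the standard calibration argument that the paper invokes without writing out. The $t$-independent field $\bar DF(\nu)$ is divergence-free by \eqref{defn:AMSE}. Its $e_1$-component vanishes on $\p\mbR^{n+1}_+$ by \eqref{defn:anisotropic-free-bdry-Intro}, so the flat-boundary (trace) contribution drops out. Convexity and one-homogeneity of $F$ give $\left<\bar DF(\nu),N\right>\le F(N)$. Your treatment of the cutoff $\psi$ (the measures $D\chi_{E_u}$ and $D\chi_G$ coincide off $\overline{E_u\Delta G}$) and of the trace on $\{x_1=0\}$ correctly makes the comparison rigorous.
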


\begin{lemma}\label{lem:SSA77-Thm3.1}
Let $n\geq2, r>0,$ and let $E\subset\mbR^{n+1}_+$ be a $F$-minimizer in $(B_r,\mbR^{n+1}_+)$.
Set $\mcS_r(E)\coloneqq B_r\cap\overline{\mbR^{n+1}_+\cap\p^\ast E}$.
Let $\mcR_r(E)$ be the set of points at which $\mcS_r(E)$ is a $C^2$-hypersurface, possibly with boundary contained in $\p\mbR^{n+1}_+$, and put $\S_r(E)\coloneqq\mcS_r(E)\setminus\mcR_r(E)$.
Let $\{\mcM_\alpha\}_{\alpha\in I}$ denote the connected components of $\mcR_r(E)$.
If
\eq{
\mcH^{n-1}(\S_r(E))=0,
}
then the following statements hold.
\begin{enumerate}
    \item For every $\alpha$ there is a $F$-minimizer $E_\alpha\subset \mbR^{n+1}_+$ in $(B_r,\mbR^{n+1}_+)$ such that, as vector Radon measures in $B_r\cap \mbR^{n+1}_+$,
\eq{\label{eq:component-GG-measure}
D\chi_{E_\alpha}
=D\chi_E\llcorner\mcM_\alpha.
}
In particular, the outer normal of $E_\alpha$ agrees with that of $E$
on $\mcM_\alpha$, and
\eq{\label{eq:component-support-closure}
\mcS_r(E_\alpha)
={\rm cl}_{B_r\cap\overline{\mbR^{n+1}_+}}(M_\alpha)
}
    \item The Gauss--Green and perimeter measures decompose as
\eq{\label{eq:component-GG-decomposition}
D\chi_E\llcorner(B_r\cap\mbR^{n+1}_+)
=\sum_{\alpha\in I}D\chi_{E_\alpha},
\quad
|D\chi_E|\llcorner(B_r\cap\mbR^{n+1}_+)
=\sum_{\alpha\in I}|D\chi_{E_\alpha}|.
}
    \item The family $\left\{{\rm cl}_{B_r\cap\overline{\mbR^{n+1}_+}}(M_\alpha)\right\}_{\alpha\in I}$
is locally finite in $B_r$.
\end{enumerate}
\end{lemma}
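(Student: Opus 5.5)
The plan follows the interior argument of Schoen--Simon--Almgren \cite[Theorem 3.1]{SSA77} (see also \cite[Lemma 1]{Simon77}). The only new ingredient is that components may reach $\p\mbR^{n+1}_+$, where the comparison sets must stay inside $\mbR^{n+1}_+$.

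\textbf{Step 1 (constructing $E_\alpha$).} Fix $\alpha$ and consider the vector measure $\mu_\alpha\coloneqq D\chi_E\llcorner\mcM_\alpha$ in $B_r\cap\mbR^{n+1}_+$. I will show that $\mu_\alpha$ is distributionally curl-free, i.e.\ that
\[
\int\bar{\rm div}T\,\rd\mu_\alpha\text{-type pairings } \int \langle T,\rd\mu_\alpha\rangle
\]
against gradients $T=\bar D\psi$ behave like those of a characteristic function. Concretely, I will show that $\mu_\alpha=D\chi_{E_\alpha}$ for the set $E_\alpha$ obtained as follows. Inside $\mcM_\alpha$'s complement in $B_r\cap\mbR^{n+1}_+$, take $E_\alpha$ to be the union of those connected components of $(B_r\cap\mbR^{n+1}_+)\setminus{\rm cl}(\mcM_\alpha)$ lying on the side into which $-\nu_E$ points along $\mcM_\alpha$.

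To justify this, I use $\mcH^{n-1}(\S_r(E))=0$. A set of vanishing $\mcH^{n-1}$ measure does not disconnect open sets, and it is removable for the divergence of bounded fields: for any $T\in C^1_c(B_r)$ with $\spt T$ avoiding $\p\mbR^{n+1}_+$, a capacity cutoff around $\S_r(E)$ kills the error, exactly as in the proof of Corollary \ref{Prop:removable-singularity}. Hence $\mcM_\alpha$ is a two-sided $C^2$ hypersurface, closed in $(B_r\cap\mbR^{n+1}_+)\setminus\S_r(E)$, and locally separating. The sidedness is globally consistent because $\mcM_\alpha$ bounds locally the set $E$. This gives \eqref{eq:component-GG-measure}, and \eqref{eq:component-support-closure} follows from $\overline{\p^\ast E_\alpha}=\p E_\alpha$.

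\textbf{Step 2 (decomposition).} The $\mcM_\alpha$ are disjoint and their union carries $|D\chi_E|$ in $B_r\cap\mbR^{n+1}_+$, since $\mcH^n(\S_r(E))=0$. This yields \eqref{eq:component-GG-decomposition}, provided the sum converges locally. Local convergence follows from Step 3, or directly from the perimeter bound of $E$ on compact sets, since $\sum_\alpha|D\chi_{E_\alpha}|(K)=|D\chi_E|(K)<\infty$.

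\textbf{Step 3 (minimality and local finiteness).} First, each $E_\alpha$ is an $F$-minimizer in $(B_r,\mbR^{n+1}_+)$. Given a competitor $G\subset\mbR^{n+1}_+$ with $E_\alpha\De G\subset\subset W$, I form the modified set $E'\coloneqq(E\setminus E_\alpha)\cup G$ or $E\cup(G\setminus E_\alpha)$, chosen according to the orientation. This set is admissible for $E$, and $E'\subset\mbR^{n+1}_+$ is automatic. Comparing $P_F(E';\mbR^{n+1}_+\cap W)\ge P_F(E;\mbR^{n+1}_+\cap W)$ and using the decomposition \eqref{eq:component-GG-decomposition} cancels the other components and gives $P_F(E_\alpha)\le P_F(G)$.

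Local finiteness then comes from a lower density bound for $F$-minimizers up to the free boundary. For every $X\in\mcS_r(E_\alpha)$ and $\rho<{\rm dist}(X,\p B_r)$,
\[
|D\chi_{E_\alpha}|(B_\rho(X)\cap\mbR^{n+1}_+)\ge c(n,F)\rho^n .
\]
This is proved by the standard comparison with $E_\alpha\setminus B_\rho(X)$ together with the relative isoperimetric inequality in half-balls. Combined with the finite total perimeter from Step 2, only finitely many closures ${\rm cl}(\mcM_\alpha)$ meet any compact subset of $B_r$.

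\textbf{Main obstacle.} I expect the hardest step to be Step 1 near $\p\mbR^{n+1}_+$. There, $\mcM_\alpha$ has boundary on $\p\mbR^{n+1}_+$, and one must check that the side assignment of $E_\alpha$ still yields exactly $D\chi_E\llcorner\mcM_\alpha$ in the open half-space, with no spurious mass. I would handle this by working only in $B_r\cap\mbR^{n+1}_+$, where the $C^2$ boundary of $\mcM_\alpha$ is invisible, and use the fact that $\mcH^{n-1}(\S_r(E))=0$ keeps the components of the complement well defined.
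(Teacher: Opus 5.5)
Your outline is the Schoen--Simon--Almgren argument that the paper simply cites, and your Step~2 and local-finiteness argument are fine. Two steps, however, do not work as written.

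The most concrete failure is the competitor in Step~3.
\begin{itemize}
\item The set $(E\setminus E_\alpha)\cup G$ agrees with $E$ outside $W$ only if $E_\alpha\subset E$ there.
\item The set $E\cup(G\setminus E_\alpha)$ never removes anything from $E$. For an inner competitor $G\subset E_\alpha$ it is just $E$, so the comparison is vacuous.
\end{itemize}
So you only cover the case $E_\alpha\subset E$. Already for $E\subsetneq E_\alpha$ (two sheets, $E$ the slab between them, $\mcM_\alpha$ the upper sheet) minimality against inner competitors is not proved. In general neither inclusion holds: take three stacked sheets, $E$ the slab between the upper two together with the region below the lowest, and $\mcM_\alpha$ the middle sheet. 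Nothing in your argument excludes this. A competitor that always works is
\[
E'=\bigl(G\setminus(E_\alpha\setminus E)\bigr)\cup(E\setminus E_\alpha)\subset\mbR^{n+1}_+,
\]
which equals $E$ outside $W$. The function $\chi_E-\chi_{E_\alpha}=\chi_{E\setminus E_\alpha}-\chi_{E_\alpha\setminus E}$ has derivative $D\chi_E\llcorner\bigcup_{\beta\neq\alpha}\mcM_\beta$, all of whose jumps have size one. Hence the reduced boundaries of $E\setminus E_\alpha$ and $E_\alpha\setminus E$ are $\mcH^n$-a.e.\ disjoint and together make up $\bigcup_{\beta\neq\alpha}\mcM_\beta$ with the normal of $E$. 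This gives $P_F(E';\mbR^{n+1}_+\cap W)\le P_F(G;\mbR^{n+1}_+\cap W)+\sum_{\beta\neq\alpha}P_F(E_\beta;\mbR^{n+1}_+\cap W)$, and your cancellation then goes through. Equivalently, $E'=\{\chi_E-\chi_{E_\alpha}+\chi_G\ge 1\}$, which is the integer-valued $BV$ route of \cite{SSA77}.

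In Step~1 the real content is global separation: no component of $(B_r\cap\mbR^{n+1}_+)\setminus{\rm cl}(\mcM_\alpha)$ may touch $\mcM_\alpha$ from both sides. Your justification, that sidedness is consistent because $\mcM_\alpha$ locally bounds $E$, only gives orientability, which $\nu_E$ already provides. A two-sided sheet ending on a codimension-two edge is locally separating but can be walked around. The fact that $\mcH^{n-1}$-null sets do not disconnect open sets does not address this either. What is needed is the following.
\begin{enumerate}
\item[(i)] Show $\p\llbracket\mcM_\alpha\rrbracket=0$ in $B_r\cap\mbR^{n+1}_+$, i.e.\ $\int_{\mcM_\alpha}\rd\omega=0$ for compactly supported $(n-1)$-forms $\omega$. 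Apply Stokes to $\eta_\varepsilon\omega$ on $\mcM_\alpha$, using ${\rm cl}(\mcM_\alpha)\setminus\mcM_\alpha\subset\S_r(E)$. Bound the error by $\int_{\mcM_\alpha}\abs{D\eta_\varepsilon}\rd\mcH^n\le C\sum_i\de_i^{-1}\mcH^n(\mcM_\alpha\cap B_{2\de_i}(x_i))\le C\sum_i\de_i^{n-1}$. This requires the upper density bound $\mcH^n(\p^\ast E\cap B_\rho(X))\le C\rho^n$ from minimality, not the Lebesgue-measure estimate used in Corollary~\ref{Prop:removable-singularity}.
\item[(ii)] Since the half-ball is convex, the Poincar\'e lemma gives $u\in BV_{loc}(B_r\cap\mbR^{n+1}_+)$ with $Du=D\chi_E\llcorner\mcM_\alpha$. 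This is the precise form of your ``curl-free'' statement.
\item[(iii)] $u$ is locally constant off ${\rm cl}(\mcM_\alpha)$ and jumps by exactly one across $\mcM_\alpha$. The pair of values on the two sides is locally constant along $\mcM_\alpha$, hence constant by connectedness. The coarea formula then gives $u=a+\chi_{E_\alpha}$ a.e.
\end{enumerate}
This proves \eqref{eq:component-GG-measure} directly. Only then is your description of $E_\alpha$ through complementary components justified.
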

\begin{proof}
The first assertion is essentially proved in \cite[Corollary 3.1]{SSA77}, and the second and the third assertions are essentially \cite[Theorem 3.1]{SSA77}.
\end{proof}

In all follows, we continue to use the notations in Lemma \ref{lem:SSA77-Thm3.1}.
From \cite{PM15}, we have the following Ahlfors regularity.

\begin{lemma}
Let $n\geq2, r>0,$ and let $E\subset\mbR^{n+1}_+$ be a $F$-minimizer in $(B_r,\mbR^{n+1}_+)$.
There exist positive constants $c_F=c_F(n,F), C_F=C_F(n,F)$ such that, for every $X\in\mcS_r(E)$ and every $0<\rho<\frac{1}{2}{\rm dist}(X,\p B_r)$, one has
\eq{\label{ineq:Ahlfors-regularity}
c_F\rho^n
\leq P_F(E;B_\rho(X)\cap\mbR^{n+1}_+)
\leq C_F\rho^n.
}
\end{lemma}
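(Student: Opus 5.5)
The plan is to prove the two inequalities in \eqref{ineq:Ahlfors-regularity} separately. The upper bound will come from a comparison argument, and the lower bound from a relative isoperimetric inequality combined with an ODE argument for the volume function. The constants will depend on $F$ only through the comparability $c(F)|z|\le F(z)\le C(F)|z|$, so throughout we may pass between $P_F$ and the Euclidean perimeter $P$ at the cost of constants $C(n,F)$.

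\emph{Upper bound.} Fix $X\in\mcS_r(E)$ and $\rho<\frac12{\rm dist}(X,\p B_r)$. For a.e.\ $s\in(\rho,2\rho)$, slicing theory gives that $G\coloneqq E\setminus B_s(X)$ (intersected with $\mbR^{n+1}_+$) has locally finite perimeter and satisfies
\[
P(G;\mbR^{n+1}_+\cap W)\le P(E;\mbR^{n+1}_+\cap W\setminus \overline{B_s(X)})+\mcH^n(\p B_s(X)\cap E).
\]
Here $W$ is a slightly larger ball still compactly contained in $B_r$. Testing \eqref{defn:F-minimizer} with this $G$, and noting that no boundary term appears on $\p\mbR^{n+1}_+$ because $P_F$ only charges $\mbR^{n+1}_+$, we obtain
\[
P_F(E;B_s(X)\cap\mbR^{n+1}_+)\le C(F)\,\mcH^n(\p B_s(X)\cap\mbR^{n+1}_+)\le C(n,F)\rho^n.
\]
This gives the upper bound.

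\emph{Lower bound.} By symmetry of the argument we may assume $X\in\p E$. Set $V(s)\coloneqq|E\cap B_s(X)|$ and $V'(s)\coloneqq|B_s(X)\cap\mbR^{n+1}_+\setminus E|$. Comparing $E$ with $E\setminus B_s(X)$ gives
\[
P(E;B_s(X)\cap\mbR^{n+1}_+)\le C(F)\,\mcH^n(E\cap\p B_s(X))=C(F)V'(s)\ \text{(for the first function).}
\]
Next we apply the isoperimetric inequality to $E\cap B_s(X)$ inside the half-space. Since $E\subset\mbR^{n+1}_+$, the relative isoperimetric inequality in $\mbR^{n+1}_+$ ignores the part of the boundary on $\p\mbR^{n+1}_+$, so
\[
V(s)^{n/(n+1)}\le C(n)\bigl(P(E;B_s\cap\mbR^{n+1}_+)+\mcH^n(E\cap\p B_s)\bigr)\le C(n,F)\,\frac{d}{ds}V(s).
\]
Integrating yields $V(s)\ge c(n,F)s^{n+1}$, provided $V(s)>0$ for all $s>0$. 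That positivity holds because $X\in\overline{\p^*E}$: at reduced boundary points the density of $E$ is $\frac12$ in the relative sense, and we pass to the closure.

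The same argument applied to the complement $\mbR^{n+1}_+\setminus E$ requires care. Its competitor $E\cup (B_s(X)\cap\mbR^{n+1}_+)$ is still a subset of $\mbR^{n+1}_+$, hence admissible. So both $E$ and its relative complement have volume density at least $c\rho^{n+1}$ in $B_\rho(X)\cap\mbR^{n+1}_+$. The relative isoperimetric inequality in the convex set $B_\rho(X)\cap\mbR^{n+1}_+$, whose constant is uniform by scaling and convexity whether the center lies in the interior or near $\p\mbR^{n+1}_+$, then gives $P(E;B_\rho(X)\cap\mbR^{n+1}_+)\ge c\rho^n$, and hence the lower bound for $P_F$.

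The main obstacle is checking that the relative isoperimetric constant of $B_\rho(X)\cap\mbR^{n+1}_+$ is uniform when $X$ sits near or on $\p\mbR^{n+1}_+$. I would handle this via the uniform Lipschitz/convex-domain Poincaré inequality (the domains are bi-Lipschitz to a ball with controlled constants), or else simply cite \cite{PM15}, where these density estimates are established.
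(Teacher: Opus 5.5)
Your argument is correct. Where the paper gives no proof of its own, you give a self-contained one: the paper simply imports the density estimates from \cite{PM15}. Those estimates are proved there for the general anisotropic capillarity functional, with a wetting term on $\p\mbR^{n+1}_+$ and for almost-minimizers. The citation therefore buys generality, but it requires specializing that framework to the present free-boundary case.

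Your proof shows directly that, because $P_F$ does not charge $\p\mbR^{n+1}_+$, four ingredients suffice:
\begin{itemize}
\item the two admissible competitors $E\setminus B_s(X)$ and $E\cup(B_s(X)\cap\mbR^{n+1}_+)$;
\item the half-space isoperimetric inequality, obtained by reflection across $\p\mbR^{n+1}_+$;
\item the ODE for the volume function, which gives $|E\cap B_\rho(X)|\ge c\rho^{n+1}$ and $|B_\rho(X)\cap\mbR^{n+1}_+\setminus E|\ge c\rho^{n+1}$;
\item a relative isoperimetric (or Poincar\'e) inequality in $B_\rho(X)\cap\mbR^{n+1}_+$, which turns these into both perimeter bounds.
\end{itemize}

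The step you single out as the main obstacle is harmless. Given the two volume bounds, use the $L^1$ Poincar\'e inequality on the convex set $B_\rho(X)\cap\mbR^{n+1}_+$, whose constant is at most half its diameter and hence at most $\rho$. Applied to $\chi_E$, it gives $\min\{|E\cap B_\rho(X)|,|B_\rho(X)\cap\mbR^{n+1}_+\setminus E|\}\le\rho\,P(E;B_\rho(X)\cap\mbR^{n+1}_+)$. This yields $P(E;B_\rho(X)\cap\mbR^{n+1}_+)\ge c\rho^n$ uniformly, regardless of where $X$ sits relative to $\p\mbR^{n+1}_+$.

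Two wording fixes. First, replace the reduction ``we may assume $X\in\p E$'' by the actual hypothesis $X\in\mcS_r(E)=B_r\cap\overline{\mbR^{n+1}_+\cap\p^\ast E}$. This hypothesis is exactly what makes both volumes positive for every $s>0$, and membership in $\p E$ is not enough. For instance, at a point of $\p\mbR^{n+1}_+$ near which $E$ coincides with the half-space, one has $P_F(E;B_\rho(X)\cap\mbR^{n+1}_+)=0$ for small $\rho$, so the lower bound fails there. Second, you use $V'(s)$ both for the complementary volume and for $\frac{d}{ds}V(s)$; rename one of them.
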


The following Lemma in the spirit of Simon \cite{Simon77} is the key ingredient to prove Theorem \ref{Thm:Bernstein-aniso}.

\begin{lemma}\label{lem:Simon77-key}
Let $n=3$, and let $E\subset\mbR^{4}_+$ be a $F$-minimizer in $(B_2,\mbR^{4}_+)$, with
$0\in\mcS_2(E)$. If $\mcH^1(\S_2(E))=0$ and
\eq{\label{condi:D_4-chi_E-<=0}
D_4\chi_E\leq0\text{ in }B_2\cap\mbR^4_+
}
in the sense of distributions, then we have $0\in\mcR_1(E)$.
\end{lemma}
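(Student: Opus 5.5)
We adapt Simon's proof of \cite[Lemma 1]{Simon77} to the free boundary setting. Apply Lemma \ref{lem:SSA77-Thm3.1} (valid since $\mcH^{1}(\S_2(E))=0$ and $n-1=2$) to split $E$ in $B_2\cap\mbR^4_+$ into $F$-minimizers $E_\alpha$ whose reduced boundaries are the connected components $\mcM_\alpha$ of the regular part. By \eqref{eq:component-GG-decomposition}, the distributional inequality \eqref{condi:D_4-chi_E-<=0} descends to each component: $D_4\chi_{E_\alpha}\le 0$, so on $\mcM_\alpha$ the upward normal satisfies $\nu_4\ge0$. By local finiteness (item (3)) only finitely many closures ${\rm cl}(\mcM_\alpha)$ meet $\overline{B_{3/2}}$, and $0$ lies in at least one of them. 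The goal is to show that near $0$ there is exactly one component and that it is a $C^2$ hypersurface, possibly with free boundary on $\p\mbR^4_+$. Interior points $0\in\mbR^4_+$ are covered by Simon's original argument, so the new case is $0\in\p\mbR^4_+$.

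\textbf{Step 1: Harnack dichotomy.} On each $\mcM_\alpha$ we use $V_F=\nu_4/F(\nu)\ge0$. By Lemma \ref{Lem:V_F-PDE} it satisfies a linear elliptic equation $\De_FV_F+2g(\na\log F(\nu),\na_FV_F)+c\,V_F=0$ with $c=F^{-1}{\rm tr}_g(h_F^2)\ge0$. By Lemma \ref{Lem:tangential-property-bdry} it has a homogeneous conormal condition on the free boundary. The strong maximum principle, together with Hopf's lemma at the free boundary, then shows that either $V_F\equiv0$ on $\mcM_\alpha$ or $V_F>0$ on $\mcM_\alpha$. Moreover the boundary Harnack inequality \cite[Theorem 5.44]{Lieberman13} gives $\sup V_F\le C\inf V_F$ on compact subsets of $\mcM_\alpha$, including free boundary portions. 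Here $\mcM_\alpha$ is regarded as a manifold with boundary, and the Ahlfors bounds \eqref{ineq:Ahlfors-regularity} keep the constants controlled.

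\textbf{Step 2: the vertical case $V_F\equiv0$.} Then $\mcM_\alpha$ is a vertical cylinder $\Sigma\times\mbR$, with $\Sigma$ an anisotropic minimizing surface in $\mbR^3_+$ for the induced integrand. Since $\Sigma$ is $2$-dimensional, the free boundary regularity theory (e.g. \cite{GX25,PM15}) gives that $\Sigma$ is regular, so $\mcM_\alpha$ is regular.

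\textbf{Step 3: the graphical case $V_F>0$.} Here $\mcM_\alpha$ is locally a graph $x_4=u(x)$ over an open set $U\subset\overline{\mbR^3_+}$, satisfying \eqref{defn:AMSE} in the interior and \eqref{defn:anisotropic-free-bdry-Intro} on $U\cap\p\mbR^3_+$. The projection to $\mbR^3$ of the singular set has $\mcH^1$-measure zero, and $2=n-1$. We would then use Corollary \ref{Prop:removable-singularity}, through a Simon-type argument, to extend $u$ across the projected singular points. The required local boundedness of $u$ near these points follows from the Harnack bound of Step 1 combined with the gradient estimate \eqref{eq:gradient-esti-WWXZ26}. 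The extended graph is then regular near $0$. A regular graph cannot be accompanied at $0$ by another component, because Ahlfors regularity and the strict ordering of the minimizers $E_\alpha$ would force the other component to touch it, contradicting the maximum principle. Hence $0\in\mcR_1(E)$.

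\textbf{Main obstacle.} The main difficulty is Step 3 at free boundary points. One has to justify that $\mcM_\alpha$ is a graph over a domain whose complement near $0$ is a compact set $K\subset\Om_s\cup\Gamma_s$ with $\mcH^{2}(K)=0$. One also has to check that the Harnack constant in Step 1 is uniform up to singular points that lie on $\p\mbR^4_+$. To handle this we would first try a contradiction and compactness argument in the style of \cite{Simon77}, blowing up at $0$.
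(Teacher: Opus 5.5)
Your architecture matches the paper's: component decomposition, the $V_F$ Harnack dichotomy, removability in the graphical case, and uniqueness of the component through $0$. The gap is the step you flag as the main obstacle, which carries the whole graphical case.

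\textbf{Missing graphicality step.} Corollary \ref{Prop:removable-singularity} needs $\mcM_\alpha$, near the singular point $Y_0=(y_0,s_0)$, to be a single-valued graph over a full relative neighbourhood of $y_0$ in $\overline{\mbR^3_+}$ minus a small compact set $K$. The condition $\nu_4>0$ only gives graphicality near each regular point. A priori the projection of $\mcM_\alpha$ need not fill a punctured neighbourhood of $y_0$, and $\mcM_\alpha$ need not be single-sheeted over it.

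The paper does not use a blow-up here; it uses De Giorgi's monotonicity. From $D_4\chi_{E_\alpha}\le0$ one gets $\chi_{E_\alpha}(X+he_4)\le\chi_{E_\alpha}(X)$ a.e. Hence the measure-theoretic interior of $E_\alpha$ is stable under downward vertical translation and the exterior under upward translation. Suppose segments $\{y_j\}\times[s_0-\varrho/2,s_0+\varrho/2]$ with $y_j\to y_0$ avoided $\mcS(E_\alpha)$. Then they would lie in, say, the interior. Combined with exterior points converging to $Y_0$, monotonicity and Ahlfors regularity put $\{y_0\}\times[s_0,s_0+\varrho/4]$ inside $\overline{\mcM_\alpha}$. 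Since $\mcH^1(\S_2(E))=0$, $\mcH^1$-a.e. point of this segment is regular with $e_4$ tangent, so $\nu_4=0$ there, contradicting $\nu_4>0$. This gives the graph over $B^{3+}_\varsigma(y_0)\setminus\mathcal K$ with $\mcH^1(\mathcal K)=0$.

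The extra ingredients you list are not needed. The corollary tests with $\arctan(u-w)$ and uses only $|Df|\le C(F)$, so it requires no bound on $u$ or $Du$ near $K$. Your proposed source for such bounds is also circular. The estimate \eqref{eq:gradient-esti-WWXZ26} presupposes a solution on a full half-ball. Harnack on compact subsets of $\mcM_\alpha$ says nothing near singular points.

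\textbf{Incomplete uniqueness step.} There is no a priori ``strict ordering'' of the $E_\alpha$. You also never show that a vertical component is alone at $0$, nor exclude a graphical and a vertical component meeting there. The paper uses $\mcH^1(\S_2(E))=0$ again: two component closures through $Y_0$, both smooth there, cannot cross transversally, since the crossing would be a $2$-dimensional subset of $\S_2(E)$. The three cases then go as follows.
\begin{enumerate}
\item Two graphical components touch tangentially. Then $\max\{v_\ast,v_\star\}-\min\{v_\ast,v_\star\}$ solves a linear divergence-form equation with conormal condition, and the Harnack inequality forces it to vanish.
\item A graphical and a vertical component would have to cross transversally.
\item Two vertical components would share a vertical segment lying in $\S_2(E)$.
\end{enumerate}

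\textbf{Vertical case.} Your route through regularity of the $2$-dimensional cross-section is a plausible alternative, but heavier than needed. The cross-section of a locally cylindrical minimizer is only an almost-minimizer for the restricted integrand, so you would have to check that the boundary regularity theory applies to it. The paper's argument is simpler: if $\overline{\mcM_\alpha}$ were singular at $Y_0$, cylindricity would produce a whole vertical segment of singular points, contradicting $\mcH^1(\S_2(E))=0$.
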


\begin{proof}
First we note, by \eqref{condi:D_4-chi_E-<=0} and \eqref{defn:reduced-bdry-nu_E},
\eq{
\nu_4
\coloneqq\left<\nu,e_4\right>
\geq0\text{ on }\mcR_2(E).
}
For any connected component $M_\ast$ of $\mcR_2(E)$, we define $V_F(x)\coloneqq\frac{\nu_4(x)}{F(\nu(x))}\geq0$ on $M_\ast$.
In view of Lemmas \ref{Lem:V_F-PDE}, \ref{Lem:tangential-property-bdry} we see, $V_F$ satisfies the equation on $\mcM_\ast$:
\eq{\label{eq:De_F-W_f}
\Delta_F V_F
=&-2 g\left(\nabla \log F(\nu), \nabla_F\left(V_F\right)\right)-\frac{V_F}{F(\nu)} \operatorname{tr}_g\left(h_F^2\right),
}
together with the boundary condition
$g\left(\nabla_F V_F, \mu\right) \equiv 0$ along the regular part of the boundary.
Hence on $M_\ast$, if we consider the quantity (which is a weighted $F$-Laplacian)
\eq{
{\rm div}_{\mcM_\ast}\left(F^2(\nu)\na_F\phi\right)
=F^2(\nu)\left(\De_F\phi+2g\left(\na\log F(\nu),\na_F\phi\right)\right),
\quad\forall\phi\in C^2(\mcM_\ast),
}
then in view of \eqref{eq:De_F-W_f} we find
\eq{
{\rm div}_{\mcM_\ast}\left(F^2(\nu)\na_F(V_F)\right)
+F(\nu)V_F{\rm tr}_g(h^2_F)
=0.
}
This gives a uniformly elliptic equation for $V_F$ on $\mcM_\ast$, hence we have the strong Harnack inequality (cf., \cite[Theorem 5.44]{Lieberman13})
\eq{
\inf_{\mcB^g_{\widetilde\rho_{X}}(X)} V_F
\geq C\sup_{\mcB^g_{\widetilde\rho_{X}}(X)}V_F
}
for any $X\in\mcM_\ast$ some suitably small $\tilde\rho_{X}>0$,
here we denote by $g$ the induced metric of $\mcM_\ast\subset\mbR^4$, by ${\rm dist}_g$ the induced distance on $(\mcM_\ast,g)$, and by $$\mcB^g_r(X)\coloneqq\left\{Y\in M_\ast:{\rm dist}_g(Y,X)<r\right\}$$ the \textit{geodesic ball} on $(\mcM_\ast,g)$ with radius $r$ centered at $X$.
Since $0<\min_{\xi\in\mbS^3}F(\xi)\leq F(\nu)\leq \max_{\xi\in\mbS^3}F(\xi)$, this in particular gives us the \textit{dichotomy}: either $\nu_4>0$ on $\mcM_\ast$, or $\nu_4\equiv0$ on $\mcM_\ast$.
We point out that the dichotomy was also shown in \cite{Simon77}, but our proof is slightly different from the original proof, since we have to find a suitable PDE to incorporate with the anisotropic free boundary condition.

We now divide the proof into several steps.

\noindent{\bf Step 1. We show that the closure of each connected component is smooth. Namely, for any $Y_0\in\S_1(E)$, if $Y_0\in\overline{\mcM_\ast}$ for some
connected component $\mcM_\ast$, then $\overline{\mcM_\ast}$ is a $C^2$-hypersurface near $Y_0$.
}

In view of the dichotomy, we break into two cases.

\noindent{\em Case 1. $\nu_4>0$ on $\mcM_\ast$.}

We start with the following claim, which is due to De Giorgi \cite[Lemma II]{DeGiorgi65}.

{\em Claim 1. For any $h>0$ small, let $B_{1,h}^+\coloneqq\{X\in B_1^+: X+[0,h]e_4\subset B_1^+\}$, then
\eq{
\chi_{E_\ast}(X+he_4)
\leq\chi_{E_\ast}(X)\text{ for a.e. }X\in B_{1,h}^+, 
}
where $E_\ast$ is the $F$-minimizer associated to $\mcM_\ast$ by Lemma \ref{lem:SSA77-Thm3.1}.
}

\medskip

To see this, we first note that by \eqref{eq:component-GG-measure} and \eqref{condi:D_4-chi_E-<=0}, we have
\eq{\label{ineq:D_4-chi_E_ast}
D_4\chi_{E_\ast}
=(D_4\chi_E)\llcorner\mcM_\ast
\leq0.
}
Then we choose any non-negative test function $\varphi\in C^\infty_c(B^+_{1,h})$ and for $h>s>0$, put
\eq{
I(s)
\coloneqq\int_{B^+_{1,h}}\chi_{E_\ast}(X+se_4)\varphi(X)\rd\mcL^4(X)
=\int_{B_{1,h}^+}\chi_{E_\ast}(Y)\varphi(Y-se_4)\rd\mcL^4(Y).
}
Hence by \eqref{ineq:D_4-chi_E_ast},
\eq{
I'(s)
=-\int_{B^+_{1,h}}\chi_{E_\ast}(Y)\p_4\varphi(Y-se_4)\rd\mcL^4(Y)
=\left<D_4\chi_{E_\ast},\varphi(\cdot-se_4)\right>
\leq0,
}
which implies that $I(h)\leq I(0)$ for any small $h>0$, namely,
\eq{
\int_{B_{1,h}^+}\left(\chi_{E_\ast}(X+he_4)-\chi_{E_\ast}(X)\right)\varphi(X)\rd\mcL^4(X)
\leq0.
}
Since $\varphi$ is arbitrary, {\em Claim 1} is thus proved.

Denote the (relative) measure-theoretic interior and exterior of $E_\ast$ as
\eq{
{\rm Int}_{\mbR^4_+}(E_\ast)
\coloneqq&\left\{X\in\mbR^4:\text{there exists }\rho>0\text{ s.t. }\Abs{B_\rho(X)\cap\mbR^4_+\setminus E_\ast}=0\right\},\\
{\rm Ext}_{\mbR^4_+}(E_\ast)
\coloneqq&\left\{X\in\mbR^4:\text{there exists }\rho>0\text{ s.t. }\Abs{B_\rho(X)\cap\mbR^4_+\cap E_\ast}=0\right\}.
}

If for some $h>0$ and some $X\in B^+_{1,h}$ we have $X+he_4\in{\rm Int}_{\mbR^4_+}(E_\ast)$, then by definition there exists $\rho>0$ such that for $\mcL^4$-a.e. $Z\in B_\rho(X)\cap\mbR^4_+$, we have $Z+he_4\in E_\ast$.
By virtue of {\em Claim 1}, we find
\eq{
1=\chi_{E_\ast}(Z+he_4)
\leq\chi_{E_\ast}(Z),
}
implying that $Z\in E_\ast$ for $\mcL^4$-a.e. $Z\in B_\rho(X)\cap\mbR^4_+$, and hence $X\in{\rm Int}_{\mbR^4_+}(E_\ast)$.
A similar property holds for ${\rm Ext}_{\mbR^4_+}(E_\ast)$. Combining, we have thus obtained: for small $h>0$ and $X\in B^+_{1,h}$ the implications hold:
\eq{\label{impli:Int-Ext}
X+he_4\in{\rm Int}_{\mbR^4_+}(E_\ast)
\Longrightarrow&X\in{\rm Int}_{\mbR^4_+}(E_\ast)\\
X\in{\rm Ext}_{\mbR^4_+}(E_\ast)\Longrightarrow&X+he_4\in{\rm Ext}_{\mbR^4_+}(E_\ast).
}

{\em Claim 2. For $Y_0$ as in the statement of {\bf Step 1}, write $Y_0=(y_0,s_0)$ for some $y_0\in\overline{\mbR^3_+}$ and $s_0\in\mbR$, and let $\pi:\overline{\mbR^4_+}\to\overline{\mbR^3_+}$ be the projection mapping. Denote for simplicity $\mcS_\ast\coloneqq{\rm cl}_{B_3\cap\overline{\mbR^4_+}}(\mcM_\ast)$. Then for any sufficiently small $\varrho>0$,  $y_0$ is a relative interior point of $\pi(\mcS_\ast\cap\mathcal Q_\varrho)$ in $\overline{\mbR^3_+}$, where $\mathcal Q_\varrho\coloneqq\overline{B^{3+}_\varrho(y_0)}\times[s_0-\varrho,s_0+\varrho]$ is a closed cylinder in $B^+_2$.}

\medskip

We fix a sufficiently small $\varrho>0$. Arguing by contradiction, we assume that $y_0$ is not a relative interior point of $\pi(\mcS_\ast\cap\mathcal Q_\varrho)$ in $\overline{\mbR^3_+}$.
Hence there exists a sequence of points $\{y_j\}_{j\in\mbN}$, with $y_j\to y_0$ as $j\to\infty$ but $y_j\notin\pi(\mcS_\ast\cap\mathcal Q_\varrho)$ for each $j$.
From this we infer that, for sufficiently large $j$,
\eq{
L_j\coloneqq\{y_j\}\times[s_0-\frac{\varrho}{2},s_0+\frac{\varrho}{2}]
\subset\mathcal{Q}_\varrho\setminus\mathcal{S}_\ast.
}
It follows that we must have either $L_j\subset{\rm Int}_{\mbR^4_+}(E_\ast)$ or $L_j\subset{\rm Ext}_{\mbR^4_+}(E_\ast)$ for each sufficiently large $j\in\mbN$.
After passing to a subsequence (still indexed by $j$), we may assume that $L_{j}\subset{\rm Int}_{\mbR^4_+}(E_\ast)$ for all $j\in\mbN$ or $\subset{\rm Ext}_{\mbR^4_+}(E_\ast)$ for all $j$.
Here we consider only the former case, and the later follows similarly.

We fix an $s\in(s_0,s_0+\frac{\varrho}2)$.
Since $y_j\to y_0$ and $(y_j,s)\in{\rm Int}_{\mbR^4_+}(E_\ast)$, we have
\eq{\label{inclu:y_0,s-int-closure}
(y_0,s)\in{\rm cl}_{\overline{\mbR^4_+}}\left({\rm Int}_{\mbR^4_+}(E_\ast)\right).
}
On the other hand, since $Y_0=(y_0,s_0)\in\mcS_\ast$, there exists a sequence of points $\{Z_k=(z_k,t_k)\}_{k\in\mbN}$ with $Z_k\in{\rm Ext}_{\mbR^4_+}(E_\ast)$ and $Z_k\to Y_0$ as $k\to\infty$.
Moreover, since $s>s_0$, for sufficiently large $k$ we must have $t_k<s$.
Using \eqref{impli:Int-Ext} for $Z_k=(z_k,t_k)\in{\rm Ext}_{\mbR^4_+}(E_\ast)$, we thus find $(z_k,s)\in{\rm Ext}_{\mbR^4_+}(E_\ast)$.
Letting $k\to\infty$ yields
\eq{\label{inclu:y_0,s-ext-closure}
(y_0,s)\in{\rm cl}_{\overline{\mbR^4_+}}\left({\rm Ext}_{\mbR^4_+}(E_\ast)\right).
}
Combining \eqref{inclu:y_0,s-int-closure} with \eqref{inclu:y_0,s-ext-closure} and taking \eqref{ineq:Ahlfors-regularity} into account, we see $(y_0,s)\in\mcS_\ast$. Since $s\in(s_0,s_0+\frac{\varrho}2)$ is arbitrary, this implies
\eq{
l_0\coloneqq\{y_0\}\times[s_0,s_0+\frac{\varrho}4]\subset\mcS_\ast
\subset\overline{\mcM_\ast}.
}
Since $\overline{\mcM_\ast}\setminus\mcM_\ast\subset\S_2(E)$ and $\mcH^1(\S_2(E))=0$, we see that $\mcH^1$-a.e. $X\in l_0$ must be an interior regular point of $\mcM_\ast$. However, at any such point $e_4$ must be a tangent vector to $\mcM_\ast$, and hence $\nu_4=0$, contradicting $\nu_4>0$.
This proves {\em Claim 2}.
As a by-product, we have also shown that: there exists a suitably small $0<\varsigma<\varrho$ such that
\eq{
\mcM_\ast\cap\left(\left(B^{3+}_\varsigma(y_0)\cap\mbR^3_+\right)\times\mbR\right)
={\rm graph}(v),
}
where $v\in C^2(\overline{B_\varsigma^{3+}(y_0)}\setminus\mathcal K)$ for $\mathcal K\coloneqq\pi\left((\overline{\mcM_\ast}\setminus\mcM_\ast)\cap\mathcal Q_\varrho\right)$. 
Note that $\mcH^1(\mathcal K)=0$, thanks to $\mcH^{1}(\S_2(E))=0$.

If $Y_0\in\mbR^4_+$, then $y_0\in\mbR^3_+$ is a (interior) removable singularity thanks to \cite{Simon77}.
If $Y_0\in\p\mbR^4_+$, then $y_0\in\p\mbR^3_+$ is a free boundary point, and by $\mcH^1(\mathcal K)=0$ we may choose $0<s<\varsigma$ such that $\mathcal K\cap\p B^3_s(y_0)=\emptyset$. Set $K\coloneqq\mathcal K\cap\overline{B^{3+}_s(y_0)},$ then we can apply the removability of singularities (Corollary \ref{Prop:removable-singularity}) to deduce that $v$ can be extended across $K$ as a $C^2$ solution. Hence we conclude $\overline{\mcM_\ast}$ is smooth near $Y_0$ in both cases.

\

\noindent{\em Case 2. $\nu_4\equiv0$ on $\mcM_\ast$.}

In this case, since $\nu_4\equiv0$ on $\mcM_\ast$ we have
\eq{
D_4\chi_{E_\ast}
=0
}
for a $F$-minimizer $E_\ast\subset\mbR^4_+$ in $(B_2,\mbR^4_+)$, thanks to \eqref{eq:component-GG-measure}.
By \cite[Lemma 28.13 (ii)]{Mag12}, for any sufficiently small $\varrho>0$, there is a set $G_\ast\subset\mbR^3_+$ of locally finite perimeter, such that
\eq{
E_\ast\cap\mathcal Q_\varrho
=G_\ast\times(s_0-\varrho,s_0+\varrho),
}
where we have used the notations introduced in {\em Claim 2} above.
In particular, if $\overline{\mcM_\ast}$ were not smooth at $Y_0$, then there would be a vertical line segment in $\overline{\mcM_\ast}\setminus\mcM_\ast\subset\S_2(E)$ through $Y_0$, contradicting the assumption that $\mcH^{1}(\S_2(E))=0$.
This completes {\bf Step 1}.

\noindent{\bf Step 2. We prove that two distinct connected component closures may not pass through the same singular point $Y_0\in\S_1(E)$.}

We argue by contradiction and suppose that $Y_0\in\overline{\mcM_\ast}\cap\overline{\mcM_\star}$ for two distinct connected components $\mcM_\ast$ and $\mcM_\star$.
By {\bf Step 1}, we know both closures are smooth near $Y_0$.
In view of the dichotomy established before {\bf Step 1}, we break into three subcases.

First, if $\nu_4>0$ on both components, then they are graphs of $v_\ast$, $v_\star$ over the same domain in $\mbR^3_+$.
We note that the two graphs cannot intersect transversely, otherwise producing a $2$-dimensional submanifold, which would belong to $\S_2(E)$, a contradiction to $\mcH^1(\S_2(E))=0$.
Thus the graphs meet tangentially, so their gradients agree at each contact point.
Consequently, the functions
\eq{
v_+
=\max\{v_\ast,v_\star\},\quad
v_-
=\min\{v_\ast,v_\star\}
}
are $C^{1,1}$-weak solutions as in \cite[Lemma 2.4]{SSA77}, and the difference $w=v_+-v_-$ satisfies
\eq{
\begin{cases}
\div(\mcA(x)Dw)=0,\\
\langle\mcA(x)Dw,e_1\rangle=0
\quad\text{on the possibly non-empty boundary},
\end{cases}
}
where $\mcA(x)=\int_0^1D^2f\left(Dv_-(x)+t(Dv_+(x)-Dv_-(x))\right)\rd t$, determined by $F,v_\ast,v_\star$, is positive definite.
Using again the strong Harnack inequality (cf., \cite[Theorem 5.44]{Lieberman13}), we conclude that $w\equiv0$, which contradicts the fact that $\mcM_\ast$ and $\mcM_\star$ are distinct.

Second, if $\nu_4\equiv0$ on both components, then by the proof of {\em Step 1, Case 2}, there exists a vertical line segment $l\in\overline{\mcM_\ast}\cap\overline{\mcM_\star}$.
If any $Y\in l$ is a regular point of $\mcS_2(E)$, then
\eq{\label{eq:mcM_ast-mcM_star}
Y\in\mcM_\ast\cap\mcM_\star.
}
Here we have used the topological fact that, connected components are relatively closed, i.e. $\mcM_\ast={\rm cl}_{\mcR_2(E)}(\mcM_\ast)$, and similarly for $\mcM_\star$.
However, since both $\mcM_\ast$, $\mcM_\star$ are connected components, \eqref{eq:mcM_ast-mcM_star} would imply that $\mcM_\ast=\mcM_\star$, a contradiction to the assumption that these two components are distinct.
Thus we have shown that the whole line segment $l\subset\S_2(E)$, and this is again a contradiction to $\mcH^1(\S_2(E))=0$. Therefore, the assumption that $\mcM_\ast$, $\mcM_\star$ are distinct cannot hold in this case.

Finally, suppose $\nu_4>0$ on $\mcM_\ast$ and $\nu_4\equiv0$ on $\mcM_\star$.
By the proof of {\em Step 1} we know that $\mcM_\ast$ is a graph over $\mbR^3_+$ locally around $Y_0$, while $\mcM_\star$ is a cylinder locally around $Y_0$, therefore $\mcM_\ast$ must meet $\mcM_\star$ transversally around $Y_0$, which would produce a $2$-dimensional submanifold belonging to $\S_2(E)$, thus a contradiction to $\mcH^1(\S_2(E))=0$.
Combining, {\bf Step 2} is thus completed.

\noindent{\bf Step 3. We show that $0\in\mcR_1(E)$.}

Note that $\mcS_1(E)=\overline{\mcR_1(E)}$, thanks to \eqref{ineq:Ahlfors-regularity}.
Hence there exists $Y_j\in\mcR_1(E)$ such that $Y_j\to0$ as $j\to\infty$.
By virtue of Lemma \ref{lem:SSA77-Thm3.1} (3), after extracting a subsequence (still indexed by $j\in\mbN$), we could assume that $Y_j\in\mcM_\ast$ for some connected component $\mcM_\ast$ and for all $j\in\mbN$.
Thus $0\in\overline{\mcM_\ast}$.
Combining with {\em Step 1} and {\em Step 2}, we thus conclude that $0\in\mcR_1(E)$. This completes the proof.
\end{proof}

We are now ready to prove the Bernstein theorem.

\begin{proof}[Proof of Theorem \ref{Thm:Bernstein-aniso}]
Let $M={\rm graph}(u)$, and let $E=\left\{(x,t)\in\mbR^4_+:t<u(x)\right\}$ be the subgraph.
By Lemma \ref{lem:calibration}, $E$ is an $F$-minimizer in $(A,\mbR^4_+)$ for every open $A\subset\mbR^4$.
We also assume WLOG that $0\in\overline{M}\cap\p\mbR^4_+$.

Now we choose any blow-down sequence $r_j\to\infty$ as $j\to\infty$ and put
\eq{
E_j
=r_j^{-1}E,\quad
M_j
=r_j^{-1}M.
}
Note that every $E_j$ is also a $F$-minimizer and $0\in\mcS_2(E_j)$, where we have adopted the notations in Lemma \ref{lem:SSA77-Thm3.1}.
Applying the compactness result of De Philippis-Maggi \cite[Theorem 2.9]{PM15}, after extracting a subsequence (still indexed by $j\in\mbN$), we have
\eq{
\chi_{E_j}
\to\chi_{E_{\infty}}\text{ in }L^1_{loc}(B_2)
}
for some $F$-minimizer $E_\infty\subset\mbR^4_+$ in $(B_2,\mbR^4_+)$.
Moreover, locally in $B_2\cap\mbR^4_+$ we have
\eq{
D\chi_{E_j}\wsc D\chi_{E_\infty},\quad
\abs{D\chi_{E_j}}\wsc\abs{D\chi_{E_\infty}},
}
and that $0\in\mcS_2(E_\infty)$.
We also note that, since each $E_j$ is a subgraph, we have $D_4\chi_{E_j}\leq0$ in the distributional sense, and hence passing to the limit, we have
\eq{
D_4\chi_{E_\infty}
\leq0.
}
On the other hand, by the interior regularity \cite{SSA77} and boundary regularity results \cite{PM15,PM17}, we have
\eq{
\mcH^1(\S_2(E_\infty))=0.
}
Combining, we see that Lemma \ref{lem:Simon77-key} is applicable, and yields that $0$ is a regular boundary point of $\mcS_2(E_\infty)$.
We denote by $\nu_\infty(0)$ the outer unit normal of $\mcS_2(E_\infty)$ at $0$.
Clearly, there exists $r>0$ sufficiently small such that
\eq{
{\rm exc}^H_{\nu_\infty}(E_\infty,0,2r)
<\frac{1}{2}\varepsilon_{{\rm crit}},
}
and $\abs{D\chi_{E_\infty}}(\p B_{2r})=0$ (indeed, this holds for a.e. $r>0$, cf. \cite[Proposition 2.16]{Mag12}).
Here ${\rm exc}^H_{\nu_\infty}(E_\infty,0,2r)$ is the so-called {\em cylindrical excess} in \cite{PM15} and $\varepsilon_{\rm crit}=\varepsilon_{\rm crit}(n,F)$ is the constant from the $\varepsilon$-regularity result \cite[Theorem 3.1]{PM15}.
By a standard continuity argument (see \cite[Remark 3.6]{PM15}), we have
\eq{
{\rm exc}_{\nu_\infty(0)}^H(E_j,0,2r)
\to
{\rm exc}_{\nu_\infty(0)}^H(E_\infty,0,2r),
}
and it follows that
\eq{
{\rm exc}^H(E_j,0,2r)
\leq{\rm exc}^H_{\nu_\infty(0)}(E_j,0,2r)
<\varepsilon_{\rm crit}
}
for all sufficiently large $j$, where ${\rm exc}^H(E_j,0,2r)$ is the {\em spherical excess} in \cite{PM15}.
Therefore, the boundary $\varepsilon$-regularity theorem \cite[Theorem 3.1]{PM15} then gives uniform $C^{1,1/2}$ graphicality of $M_j\cap B_{cr}$ over the tangent hyperplane $T_0(\mcS_2(E_\infty))$ for some $c=c(F)>0$.
Moreover, from the $C^{1,\frac12}$-estimate in \cite[Theorem 3.1]{PM15} we see, for any $Y_j\in M_j\cap B_{cr}$ with $Y_j\to0\in\mcS_2(E_\infty)\cap B_{cr}$, one has
\eq{\label{ineq:normal-convergence}
\nu_{E_j}(Y_j)
\to\nu_\infty(0).
}

Now for any $X\in M$, we have $X_j\coloneqq r_j^{-1}X\in M_j$ and $X_j\to0$, also $X_j\in B_{cr}$ for all sufficiently large $j$. By \eqref{ineq:normal-convergence}, we have $\lim_{j\to\infty}\nu_{E_j}(X_j)=\nu_\infty(0)$.
Moreover, note that $\nu_{E_j}(X_j)=\nu_E(X)$ if we rescale back, which in turn implies that
\eq{
\nu_\infty(0)
=\lim_{j\to\infty}\nu_{E_j}(X_j)
=\lim_{j\to\infty}\nu_E(X)
=\nu_E(X).
}
By the arbitrariness of $X$, we conclude that $M$ is a hyperplane.
This completes the proof.
\end{proof}

\bibliography{BibTemplate.bib}
\bibliographystyle{amsalpha}
\end{document}